\providecommand{\tabularnewline}{\\}
\numberwithin{equation}{section}
\numberwithin{figure}{section}
\numberwithin{table}{section}
\theoremstyle{plain}
\newtheorem{thm}{\protect\theoremname}[section]
\theoremstyle{definition}
\newtheorem{defn}[thm]{\protect\definitionname}
\theoremstyle{remark}
\newtheorem{rem}[thm]{\protect\remarkname}
\theoremstyle{plain}
\newtheorem{lem}[thm]{\protect\lemmaname}
\newenvironment{proof}[1][\protect\proofname]{\par
\normalfont\topsep6\p@\@plus6\p@\relax
\trivlist
\itemindent\parindent
\item[\hskip\labelsep\scshape #1]\ignorespaces
}{%
\endtrivlist\@endpefalse
}
\providecommand{\proofname}{Proof}
\theoremstyle{plain}
\newtheorem{prop}[thm]{\protect\propositionname}
\theoremstyle{plain}
\newtheorem{cor}[thm]{\protect\corollaryname}
\theoremstyle{plain}
\newtheorem{conjecture}[thm]{\protect\conjecturename}
\date{}
\definecolor{green}{RGB}{0, 180, 0}
\newcommand{\tam}{\mathcal{T}}
\newcommand{\symgrp}{\mathcal{S}}
\newcommand{\full}{\mathcal{S}}
\newcommand{\chain}{\mathcal{C}}
\newcommand{\nofull}{\mathcal{N}}
\providecommand{\conjecturename}{Conjecture}
\providecommand{\corollaryname}{Corollary}
\providecommand{\definitionname}{Definition}
\providecommand{\lemmaname}{Lemma}
\providecommand{\propositionname}{Proposition}
\providecommand{\remarkname}{Remark}
\providecommand{\theoremname}{Theorem}
\begin{document}

\title{A recursion on maximal chains in the Tamari lattices}

\author{Luke Nelson}
\begin{abstract}
The Tamari lattices have been intensely studied since their introduction
by Dov Tamari around 1960. However oddly enough, a formula for the
number of maximal chains is still unknown. This is due largely to
the fact that maximal chains in the $n$-th Tamari lattice $\tam_{n}$
range in length from $n-1$ to $\binom{n}{2}$. In this note, we treat
vertices in the lattice as Young diagrams and identify maximal chains
as certain tableaux. For each $i\geq-1$, we define $\chain_{i}(n)$
as the set of maximal chains in $\tam_{n}$ of length $n+i$. We give
a recursion for $\#\chain_{i}(n)$ and an explicit formula based on
predetermined initial values. The formula is a polynomial in $n$
of degree $3i+3$. For example, the number of maximal chains of length
$n$ in $\tam_{n}$ is $\#\chain_{0}(n)=\binom{n}{3}$. The formula
has a combinatorial interpretation in terms of a special property
of maximal chains.
\end{abstract}

\keywords{Tamari lattice, number of maximal chains, enumeration, recursion,
tableaux}

\maketitle

\section{\label{sec:Intro}Introduction}

The Tamari lattices $\{\tam_{n}\}$ have been intensely studied since
their introduction by Dov Tamari \cite{Tam62}. He defined $\tam_{n}$
on bracketings of a set of $n+1$ objects, with a cover relation based
on the associativity rule in one direction. Friedman and Tamari later
proved the lattice property \cite{FriedTam67}. $\tam_{n}$ is both
a quotient and a sublattice of the weak order on the symmetric group
$\symgrp_{n}$, and its Hasse diagram is the $1$-skeleton of the
associahedron (or Stasheff polytope) \cite{BjornerWachs97,Reading06}. 

As usual, $[n]$ denotes the set $\{1,\ldots,n\}$ of the first $n$
positive integers, and we adopt the convention that $[0]$ denotes
the empty set. The number of vertices in $\tam_{n}$ is the $n$-th
Catalan number, $C_{n}=\frac{1}{n+1}\binom{2n}{n}$. Triangulations
of a convex $(n+2)$-gon, noncrossing partitions of $[n]$, and Dyck
paths of length $2n$ are examples of over $200$ combinatorial structures
counted by the Catalan sequence \cite{Stanley2015}. The Kreweras
and Stanley lattices are two other noted lattices defined on Catalan
sets. The Stanley lattice is a refinement of the Tamari lattice, which
is a refinement of the Kreweras lattice \cite[Exercises 7.2.1.6-27, 28]{Knu06}
\cite{BB09}.

The search for enumeration formulas, whether for maximal chains or
intervals, \textit{etc.}, are classic problems for any family of posets.
The pursuit of solutions often leads to relationships with other combinatorial
structures and a better understanding of the poset at hand. 

Definitions concerning posets may be found in \cite[Chapter 3]{Sta12}.
If $x\leq y$ in a poset $P$, the subposet $[x,y]=\{z\in P\mid x\leq z\leq y\}$
of $P$ is called a (closed) \textit{\textcolor{green}{interval}}.
Intervals in the Kreweras lattice are in bijection to ternary trees
\cite{Kreweras72,EP82}, while those in the Stanley lattice are pairs
of noncrossing Dyck paths \cite{SC85}. Chapoton enumerated the intervals
in $\tam_{n}$, finding this to be the number of planar triangulations
(\textit{i.e.}, maximal planar graphs) \cite{Chap05}. Bergeron and
Pr\'eville-Ratelle generalized the Tamari posets to the $m$-Tamari
posets $\tam_{n}^{(m)}$ \cite{BP12} (the case $m=1$ is $\{\tam_{n}\}$).
Shortly after, Bousquet-M\'elou, Fusy and Pr\'eville-Ratelle proved the
$m$-Tamari posets are lattices and generalized Chapoton's formula
to $\tam_{n}^{(m)}$ \cite{BFP11}. Its formula has a simple factorized
form (as in Chapoton's formula) and a combinatorial interpretation;
see \cite{BB09,ChapChatPons13,ChatelPons15}.

A subset of a poset in which any two elements are comparable is called
a \textit{\textcolor{green}{chain}}. The \textit{\textcolor{green}{length}}
of a finite chain $C$ is the number of its elements minus one, denoted
$l(C)$. If $x<y$ and there does not exist $z$ such that $x<z<y$,
then we say that $y$ \textit{\textcolor{green}{covers}} $x$, or
$x$ is covered by $y$, and denote $y\gtrdot x$, or $x\lessdot y$.
A \textit{\textcolor{green}{saturated chain}} in a finite poset is
a sequence of elements $x_{0}\lessdot x_{1}\lessdot\cdots\lessdot x_{l-1}\lessdot x_{l}$.
If a poset has an element $x$ with the property that $x\leq y$ for
all $y$ in the poset, we denote that element by \textit{\textcolor{green}{$\hat{0}$}}.
Similarly, \textit{\textcolor{green}{$\hat{1}$}} is the element above
all others if it exists. In a finite poset with a $\hat{0}$ and $\hat{1}$,
a \textit{\textcolor{green}{maximal chain}} is a sequence of elements
$\hat{0}=x_{0}\lessdot x_{1}\lessdot\cdots\lessdot x_{l-1}\lessdot x_{l}=\hat{1}$. 

Maximal chains in the Kreweras lattice, of noncrossing partitions
of $[n]$, are in bijection to factorizations of an $n$-cycle as
the product of $n-1$ transpositions \cite{Kreweras72} \cite[Exercise 7.2.1.6-33]{Knu06},
the number of which is $n^{n-2}$ \cite{Denes59}. This is also the
number of parking functions of length $n-1$ \cite{FoataRiordan1974}
\cite{Stanley97}, and the number of trees on $n$ labeled vertices
\cite{CayleyTreeTheorem}. There is a simple bijection between maximal
chains in the Stanley lattice of order $n$ and standard Young tableaux
of shape $(n-1,n-2,\ldots,1)$ (see \cite[Exercise 7.2.1.6-34]{Knu06}
for one), whose number is given by the hook length formula \cite{FraRobThr54}:
\begin{equation}
\frac{\binom{n}{2}!}{1^{n-1}3^{n-2}\cdots(2n-5)^{2}(2n-3)^{1}}.\label{eq:FormulaStanleyMaximalChains}
\end{equation}
This is again the number of maximal chains in the weak order on $\symgrp_{n}$
\cite{EdelmanGreene87,Sta84}.

A natural question arises: What is the number of maximal chains in
the Tamari lattice? Quoting Knuth, ``The enumeration of such paths
in Tamari lattices remains mysterious.'' Despite their interesting
aspects and the attention they have received, a formula for the number
of maximal chains in the Tamari lattices is still unknown. The complexity
of the problem is largely due to the fact that maximal chains in the
Tamari lattices vary over a great range. The longest chains in $\tam_{n}$
have length $\binom{n}{2}$ and the unique shortest has length $n-1$
\cite{Markowsky92} \cite[Exercise 7.2.1.6-27(h)]{Knu06}. We determined
a bijection between longest chains in $\tam_{n}^{(m)}$ and standard
$m$-shifted tableaux of shape $(m(n-1),m(n-2),\ldots,m)$ in \cite{FN13}
and, as a corollary, a formula for the number of longest chains in
$\tam_{n}$:
\begin{equation}
\binom{n}{2}!\frac{(n-2)!(n-3)!\cdots(2)!(1)!}{(2n-3)!(2n-5)!\cdots(3)!(1)!}.\label{eq:FormulaTamLongestChains}
\end{equation}

Keller introduced \textit{green mutations} and maximal sequences of
such mutations, called \textit{maximal green sequences} \cite{Keller2011}.
In certain cases, maximal green sequences are in bijection with maximal
chains in the Tamari lattice or the Cambrian lattice (a generalization
of the Tamari lattice \cite{Reading06}) \cite{Keller2013,GreenSequences}.
Garver and Musiker list several applications of maximal green sequences
to representation theory and physics \cite{GreenSequences}. The problems
of enumeration and classification of such sequences are noted interests.

Aside from these cases, we are unaware of any results pertaining to
the enumeration of maximal chains in the Tamari lattices. In this
note, our focus pertains to the following definition.
\begin{defn}
\label{def:Cin}Let $i\geq-1$ and $n\geq1$. \textit{\textcolor{green}{$\chain_{i}(n)$}}
is the set of all maximal chains of length $n+i$ in $\tam_{n}$.
\end{defn}
\begin{table}[h]
\begin{centering}
\begin{tabular}{|c|c|c|c|c|c|c|c|c|c|}
\hline 
{\footnotesize{}Length} & {\footnotesize{}$\tam_{1}$} & {\footnotesize{}$\tam_{2}$} & {\footnotesize{}$\tam_{3}$} & {\footnotesize{}$\tam_{4}$} & {\footnotesize{}$\tam_{5}$} & {\footnotesize{}$\tam_{6}$} & {\footnotesize{}$\tam_{7}$} & {\footnotesize{}$\tam_{8}$} & {\footnotesize{}$\tam_{9}$}\tabularnewline
\hline 
\hline 
{\footnotesize{}n - 1} & {\footnotesize{}1} & {\footnotesize{}1} & {\footnotesize{}1} & {\footnotesize{}1} & {\footnotesize{}1} & {\footnotesize{}1} & {\footnotesize{}1} & {\footnotesize{}1} & {\footnotesize{}1}\tabularnewline
\hline 
{\footnotesize{}n} &  &  & {\footnotesize{}1} & {\footnotesize{}4} & {\footnotesize{}10} & {\footnotesize{}20} & {\footnotesize{}35} & {\footnotesize{}56} & {\footnotesize{}84}\tabularnewline
\hline 
{\footnotesize{}n + 1} &  &  &  & {\footnotesize{}2} & {\footnotesize{}22} & {\footnotesize{}112} & {\footnotesize{}392} & {\footnotesize{}1,092} & {\footnotesize{}2,604}\tabularnewline
\hline 
{\footnotesize{}n + 2} &  &  &  & {\footnotesize{}2} & {\footnotesize{}22} & {\footnotesize{}232} & {\footnotesize{}1,744} & {\footnotesize{}9,220} & {\footnotesize{}37,444}\tabularnewline
\hline 
{\footnotesize{}n + 3} &  &  &  &  & {\footnotesize{}18} & {\footnotesize{}382} & {\footnotesize{}4,474} & {\footnotesize{}40,414} & {\footnotesize{}280,214}\tabularnewline
\hline 
{\footnotesize{}n + 4} &  &  &  &  & {\footnotesize{}13} & {\footnotesize{}348} & {\footnotesize{}8,435} & {\footnotesize{}123,704} & {\footnotesize{}1,321,879}\tabularnewline
\hline 
{\footnotesize{}n + 5} &  &  &  &  & {\footnotesize{}12} & {\footnotesize{}456} & {\footnotesize{}12,732} & {\footnotesize{}276,324} & {\footnotesize{}4,578,596}\tabularnewline
\hline 
{\footnotesize{}n + 6} &  &  &  &  &  & {\footnotesize{}390} & {\footnotesize{}17,337} & {\footnotesize{}550,932} & {\footnotesize{}12,512,827}\tabularnewline
\hline 
{\footnotesize{}n + 7} &  &  &  &  &  & {\footnotesize{}420} & {\footnotesize{}21,158} & {\footnotesize{}917,884} & {\footnotesize{}29,499,764}\tabularnewline
\hline 
{\footnotesize{}n + 8} &  &  &  &  &  & {\footnotesize{}334} & {\footnotesize{}27,853} & {\footnotesize{}1,510,834} & {\footnotesize{}62,132,126}\tabularnewline
\hline 
{\footnotesize{}n + 9} &  &  &  &  &  & {\footnotesize{}286} & {\footnotesize{}33,940} & {\footnotesize{}2,166,460} & {\footnotesize{}120,837,274}\tabularnewline
\hline 
{\footnotesize{}n + 10} &  &  &  &  &  &  & {\footnotesize{}41,230} & {\footnotesize{}3,370,312} & {\footnotesize{}221,484,557}\tabularnewline
\hline 
{\footnotesize{}n + 11} &  &  &  &  &  &  & {\footnotesize{}45,048} & {\footnotesize{}4,810,150} & {\footnotesize{}393,364,848}\tabularnewline
\hline 
{\footnotesize{}n + 12} &  &  &  &  &  &  & {\footnotesize{}50,752} & {\footnotesize{}7,264,302} & {\footnotesize{}666,955,139}\tabularnewline
\hline 
{\footnotesize{}n + 13} &  &  &  &  &  &  & {\footnotesize{}41,826} & {\footnotesize{}10,435,954} & {\footnotesize{}1,134,705,692}\tabularnewline
\hline 
{\footnotesize{}n + 14} &  &  &  &  &  &  & {\footnotesize{}33,592} & {\footnotesize{}15,227,802} & {\footnotesize{}1,933,708,535}\tabularnewline
\hline 
{\footnotesize{}n + 15} &  &  &  &  &  &  &  & {\footnotesize{}20,089,002} & {\footnotesize{}3,316,121,272}\tabularnewline
\hline 
{\footnotesize{}n + 16} &  &  &  &  &  &  &  & {\footnotesize{}27,502,220} & {\footnotesize{}5,604,687,775}\tabularnewline
\hline 
{\footnotesize{}n + 17} &  &  &  &  &  &  &  & {\footnotesize{}32,145,952} & {\footnotesize{}9,577,349,974}\tabularnewline
\hline 
{\footnotesize{}n + 18} &  &  &  &  &  &  &  & {\footnotesize{}36,474,460} & {\footnotesize{}15,969,449,634}\tabularnewline
\hline 
{\footnotesize{}n + 19} &  &  &  &  &  &  &  & {\footnotesize{}30,474,332} & {\footnotesize{}26,387,217,370}\tabularnewline
\hline 
{\footnotesize{}n + 20} &  &  &  &  &  &  &  & {\footnotesize{}23,178,480} & {\footnotesize{}41,902,119,016}\tabularnewline
\hline 
{\footnotesize{}n + 21} &  &  &  &  &  &  &  &  & {\footnotesize{}65,076,754,954}\tabularnewline
\hline 
{\footnotesize{}n + 22} &  &  &  &  &  &  &  &  & {\footnotesize{}93,803,013,648}\tabularnewline
\hline 
{\footnotesize{}n + 23} &  &  &  &  &  &  &  &  & {\footnotesize{}131,664,410,706}\tabularnewline
\hline 
{\footnotesize{}n + 24} &  &  &  &  &  &  &  &  & {\footnotesize{}158,363,393,996}\tabularnewline
\hline 
{\footnotesize{}n + 25} &  &  &  &  &  &  &  &  & {\footnotesize{}179,041,479,392}\tabularnewline
\hline 
{\footnotesize{}n + 26} &  &  &  &  &  &  &  &  & {\footnotesize{}150,158,648,356}\tabularnewline
\hline 
{\footnotesize{}n + 27} &  &  &  &  &  &  &  &  & {\footnotesize{}108,995,910,720}\tabularnewline
\hline 
\hline 
{\footnotesize{}Totals} & {\footnotesize{}1} & {\footnotesize{}1} & {\footnotesize{}2} & {\footnotesize{}9} & {\footnotesize{}98} & {\footnotesize{}2,981} & {\footnotesize{}340,549} & {\footnotesize{}216,569,887} & {\footnotesize{}994,441,978,397}\tabularnewline
\hline 
\end{tabular}
\par\end{centering}

\caption{$\#\chain_{i}(n)$: Number of maximal chains in $\tam_{n}$ of length
$n+i$}
\label{tab:1-1}

\end{table}

The main result of this note is Theorem \ref{thm:MainThrm}: We give
a recursion for $\#\chain_{i}(n)$ and an explicit formula based on
predetermined initial values. The formula is a polynomial in $n$
of degree $3i+3$. For example, the number of maximal chains of length
$n-1$ in $\tam_{n}$ is $\#\chain_{-1}(n)=1$, while the number of
length $n$ is $\#\chain_{0}(n)=\binom{n}{3}$. Table \ref{tab:1-1}
is a computer based compilation of the numbers of maximal chains by
length in $\tam_{n}$ up through $\tam_{9}$. The numbers of lengths
$3,4,5$ and $6$ in $\tam_{4}$ are $\#\chain_{-1}(4)=1$, $\#\chain_{0}(4)=4$,
$\#\chain_{1}(4)=2$ and $\#\chain_{2}(4)=2$, respectively. The bottom
entry of a column is the number of longest chains in $\tam_{n}$ (given
by equation (\ref{eq:FormulaTamLongestChains})). For example, the
number of longest chains in $\tam_{6}$ is $286$.

Bernardi and Bonichon rewrote the covering relation in $\tam_{n}$
in terms of Dyck paths \cite{BB09}. We find it useful to work mainly
from the perspective of Young diagrams, but rely on properties of
both sets. We present basic terminology and the covering relation
in Section \ref{sec:Prelim}. 

We rely on two main maps: $\psi$ and $\phi_{i,n}^{r}$. We use $\psi$
to identify maximal chains in $\tam_{n}$ with certain tableaux. We
obtain an expression for the number of maximal chains using $\phi_{i,n}^{r}$,
which takes a maximal chain in $\chain_{i}(n)$ to one in $\chain_{i}(n+1)$.
Because of $\psi$, we may express $\phi_{i,n}^{r}$ as a map on tableaux.
In Section \ref{sec:psi}, we define $\psi$, establish basic properties,
and conclude with Theorem \ref{thrm:Alpha}, by defining the first
part of $\phi_{i,n}^{r}$. A maximal chain in the image of $\psi$
may or may not possess a ``plus-full-set''; see Definition \ref{def:FS}.
In Section \ref{sec:phi}, we establish the other parts of $\phi_{i,n}^{r}$
in Theorems \ref{thm:Beta} and \ref{thrm:PrePhi}. We specialize
Theorem \ref{thrm:PrePhi} to Theorem \ref{thrm:PhiINR}, thereby
defining $\phi_{i,n}^{r}$, where $r$ determines the domain and codomain
in terms of plus-full-sets. The main focus of Section \ref{sec:phi}
and a key ingredient leading up to our main objective is the fact
that this map is bijective.

In Section \ref{sec:formula}, we gather more on properties and consequences
of $\phi_{i,n}^{r}$ and tie our results together to write a recursive
formula for $\#\chain_{i}(n)$. $\phi_{i,n}^{r}$ takes a maximal
chain to one with one more plus-full-set; see Proposition \ref{prop:NumPFS}.
This enables us to write every maximal chain, which has a plus-full-set,
uniquely in terms of one with one less plus-full-set. We extend this
to a unique representation in terms of a maximal chain with no plus-full-sets
in Corollary \ref{cor:UniqueRepresentationByPhi}. By relating this
representation to specific plus-full-sets that a maximal chain contains
in Proposition \ref{prop:TtupleRelatesToPFS}, we obtain an expression
for $\#\chain_{i}(n)$; see equation (\ref{eq:=000023Ci(n)FirstExpr}).
For each $i\geq-1$, there exists a maximal chain in $\chain_{i}(2i+3)$
containing no plus-full-sets (Lemma \ref{lem:2iPlus3}), but surprisingly,
for all $n\geq2i+4$, each maximal chain in $\chain_{i}(n)$ has a
plus-full-set (Theorem \ref{thrm:GOrE2IPlus4}). We utilize these
latter two facts to refine our expression for $\#\chain_{i}(n)$ and
achieve our main objective in Theorem \ref{thm:MainThrm}.

\section{\label{sec:Prelim}Preliminaries}

In this section, we present basic terminology of Young diagrams and
Dyck paths and the covering relation in the Tamari lattice. 

A \textit{\textcolor{green}{partition}} of a positive integer $l$
is a weakly decreasing sequence $\lambda=(\lambda_{1},\lambda_{2},\ldots,\lambda_{k})$
of positive integers summing to $l$. A \textit{\textcolor{green}{Young
diagram}} $Y$ of \textit{\textcolor{green}{shape}} $\lambda$ is
a left-justified collection of boxes having $\lambda_{j}$ boxes in
row $j$, for $1\leq j\leq k$. The shape of $Y$ is denoted \textit{\textcolor{green}{$sh(Y)$}}.
Rows and columns of the diagram begin with an index of one. The \textit{\textcolor{green}{length}}
of a row (or of a column) is its number of boxes. We denote the box
in row $x$ and column $y$ by $(x,y)$. We adopt the \textit{English
notation}, in which rows are indexed downward. The \textit{\textcolor{green}{empty
partition}} $\lambda=(0)$ is associated with the \textit{\textcolor{green}{null
diagram}} $\emptyset$ having no boxes. The \textit{\textcolor{green}{staircase
shape}} $(n,n-1,n-2,\ldots,1)$ is denoted $\delta_{n}$, where we
set $\delta_{n}=(0)$ if $n\leq0$. Often times, which will be clear
by the context, we abuse notation by identifying a partition $\lambda$
with its associated Young diagram also denoted $\lambda$ or vice
versa. For example, $\delta_{3}$ is the shape $(3,2,1)$ or it is
the Young diagram of that shape. 

A \textit{\textcolor{green}{Dyck path}} of length $2n$ is a path
on the square grid of north and east steps from $(0,0)$ to $(n,n)$
which never goes below the line $y=x$. Necessarily, every Dyck path
begins with a north step and ends with an east step, and has an equal
number of both types of steps. The \textit{\textcolor{green}{height}}
of a Dyck path is its number of north steps. In \cite{BB09}, vertices
in $\tam_{n}$ are interpreted as the set of Dyck paths of length
$2n$, the number of which is the $n$-th Catalan number $C_{n}=\frac{1}{n+1}\binom{2n}{n}$.

There is a natural bijective correspondence between the set of Dyck
paths of length $2n$ and a set of Young diagrams, to which we identify
the set of vertices in $\tam_{n}$: Roughly speaking, a Dyck path
gives the silhouette of the Young diagram. This is the set of Young
diagrams contained in $\delta_{n-1}$. $\tam_{1}$ is comprised of
a single vertex, the null diagram. Figure \ref{fig:2-1} is the set
of $C_{4}=14$ Dyck paths of length $8$ and corresponding Young diagrams.

\begin{figure}[h]
\begin{centering}
\includegraphics[scale=0.5]{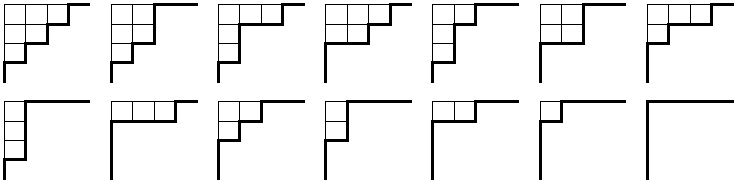}
\par\end{centering}

\caption{\label{fig:2-1}The vertices of $\tam_{4}$ in terms of Dyck paths
and Young diagrams}
\end{figure}

\begin{rem}
\label{rem:DyckPathYoungDiagramCorrespondence}Let $Y\in\tam_{n}$
be a Young diagram. For each $m\geq n$, $Y\in\tam_{m}$ and $Y$
corresponds to exactly one Dyck path of length $2m$. In the other
direction, any Dyck path (regardless of length) corresponds to exactly
one Young diagram.\end{rem}
\begin{defn}
\label{def:PrimePathSlantLine}If $P$ is a Dyck path and $L$ is
the line segment (of slope one) that joins the endpoints of $P$,
then $P$ is said to be \textit{\textcolor{green}{prime}} if $P$
intersects $L$ only at the endpoints of $P$. (We word the definition
of prime Dyck paths differently than in \cite{BB09}, but it has the
same meaning.)
\end{defn}
Of the Dyck paths in Figure \ref{fig:2-1}, only the last five in
the second row are prime, corresponding to the Young diagrams contained
in $(2,1)$. A Dyck path of length $2n$ has exactly $n$ \textit{\textcolor{green}{prime
Dyck subpaths}}, each uniquely determined by its beginning north step.
In Figure \ref{fig:2-2}, for the given Dyck path of length $8$,
its $4$ prime Dyck subpaths are bolded. The line segment joining
the endpoints of each prime Dyck subpath is drawn. As required in
Proposition \ref{prop:DyckPathsDecreasingHeights}, we characterize
pairs of prime Dyck subpaths. 
\begin{lem}
\label{lem:PrimePathsChar}If $Q$ and $R$ are two prime Dyck subpaths
of a Dyck path, then exactly one of the following characterizes $Q$
and $R$:
\begin{enumerate}
\item \label{enu:PrimePathsChar1}$Q\cap R=\emptyset$, i.e., $Q$ and $R$
have no common points.
\item \label{enu:PrimePathsChar2}$Q$ and $R$ intersect in a single point.
\item \label{enu:PrimePathsChar3}$Q\subsetneq R$ or $R\subsetneq Q$,
i.e., $Q$ is a proper subpath of $R$, or $R$ is a proper subpath
of $Q$.
\item \label{enu:PrimePathsChar4}$Q=R$.
\end{enumerate}
\end{lem}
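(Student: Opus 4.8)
The plan is to translate everything into the language of \emph{time} (step count) along the path, which reduces the geometric statement to a clean combinatorial claim about the integer intervals swept out by the two subpaths. Fix the ambient Dyck path and index its points by time $t=0,1,\ldots,2n$, where at time $t$ the path has taken $t$ steps; because every step increases the coordinate sum $x+y$ by exactly one, the map sending $t$ to the point reached at time $t$ is injective. Consequently two subpaths meet in a point if and only if they share a time, so throughout I may replace ``common points of $Q$ and $R$'' by ``common elements of the underlying time intervals.'' Writing $h(t)$ for the height of the path above the main diagonal after $t$ steps (north steps minus east steps), I record the basic description of a prime subpath: the prime subpath determined by a given north step is the segment from the time $s$ just before that step to the first time $e>s$ with $h(e)=h(s)$, and it satisfies $h(t)>h(s)$ for all $s<t<e$. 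Thus its starting height $h(s)$ is the strict minimum of $h$ on $[s,e]$, attained only at the endpoints $s$ and $e$, and I may encode $Q$ and $R$ by their time intervals $[s_{Q},e_{Q}]$ and $[s_{R},e_{R}]$.

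Next I dispose of the case of equal starting times. Since each prime Dyck subpath is determined uniquely by its beginning north step, $s_{Q}=s_{R}$ forces $e_{Q}=e_{R}$ and hence $Q=R$, which is alternative~(4). So I may assume the starts differ and, by symmetry, that $s_{Q}<s_{R}$.

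The heart of the argument is to show that proper overlap is impossible, i.e.\ that whenever $R$ begins strictly inside $Q$ it must in fact be nested in $Q$. Concretely, suppose $s_{Q}<s_{R}<e_{Q}$. Because $h(t)>h(s_{Q})$ for every $t$ strictly between $s_{Q}$ and $e_{Q}$, I get $h(s_{R})>h(s_{Q})$. I then claim $e_{R}\le e_{Q}$: if instead $e_{R}>e_{Q}$, then $e_{Q}$ lies strictly between $s_{R}$ and $e_{R}$, so the excursion property of $R$ would force $h(e_{Q})>h(s_{R})>h(s_{Q})$, contradicting $h(e_{Q})=h(s_{Q})$. Hence $[s_{R},e_{R}]\subseteq[s_{Q},e_{Q}]$, and the containment is proper because $s_{R}>s_{Q}$; this is alternative~(3) with $R\subsetneq Q$. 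In the remaining situation $s_{R}\ge e_{Q}$ the intervals are separated: if $s_{R}>e_{Q}$ they are disjoint (alternative~(1)), and if $s_{R}=e_{Q}$ they meet in the single time $e_{Q}=s_{R}$, i.e.\ in exactly one point (alternative~(2)).

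Finally I would check mutual exclusivity, so that \emph{exactly} one alternative holds. For the fixed pair $Q,R$ the encoding by time intervals makes the four outcomes manifestly disjoint: the intersection $[s_{Q},e_{Q}]\cap[s_{R},e_{R}]$ is either empty (alternative~(1)), a single integer (alternative~(2)), a proper nonempty subinterval of one of them (alternative~(3)), or all of both (alternative~(4)); and since a prime subpath spans at least three points, the nested and equal cases indeed contribute more than one common point, separating them from~(1) and~(2). The only genuine obstacle is the non-crossing (\emph{laminar}) step that rules out proper overlap; once the excursion description of prime subpaths is available, the height comparison above settles it, and the remainder is bookkeeping on intervals.
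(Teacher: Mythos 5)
Your proof is correct and is essentially the paper's own argument in time-parametrized form: your height levels $h(s_{Q})$ and $h(s_{R})$ encode precisely the slope-one lines through the endpoints that the paper compares, and your ruling out of proper overlap via the excursion property ($h(t)>h(s)$ strictly between start and first return) is the same laminarity observation the paper uses when it concludes that a shared step forces equality or proper nesting according to whether the two lines coincide or one lies above the other. The differences are organizational rather than mathematical---you split cases by starting times where the paper first negates alternatives (1) and (2) to get a common step, and you make explicit two details the paper leaves implicit, namely the injectivity of the time-to-point map (so point intersections equal time intersections) and the mutual exclusivity needed for ``exactly one.''
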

\begin{proof}
Suppose neither (\ref{enu:PrimePathsChar1}) nor (\ref{enu:PrimePathsChar2}).
Then $Q$ and $R$ must have a step in common. If the endpoints of
$Q$ and of $R$ all lie on the same line, then $Q=R$. If, without
loss of generality, the line containing the endpoints of $Q$ lies
above the line containing the endpoints of $R$, then $Q$ is a proper
subpath of $R$.
\end{proof}
\begin{figure}[h]
\raggedleft{}%
\begin{minipage}[c]{0.5\columnwidth}%
\begin{center}
\includegraphics[scale=0.5]{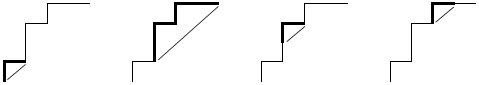}
\par\end{center}

\begin{center}
\caption{\label{fig:2-2}Prime Dyck subpaths}

\par\end{center}%
\end{minipage}\hfill{}%
\begin{minipage}[c]{0.45\columnwidth}%
\begin{center}
\includegraphics[scale=0.5]{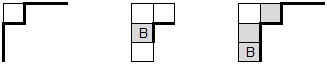}
\par\end{center}

\begin{center}
\caption{\label{fig:2-3}Prime paths}

\par\end{center}%
\end{minipage}
\end{figure}

For an example of Lemma \ref{lem:PrimePathsChar}(\ref{enu:PrimePathsChar1}),
take $Q$ and $R$ to be the prime Dyck subpaths in the first and
third examples of Figure \ref{fig:2-2}. For \ref{lem:PrimePathsChar}(\ref{enu:PrimePathsChar2}),
let $Q$ and $R$ be the subpaths in the first and second examples.
For $Q\subsetneq R$, let $Q$ and $R$ be the subpaths in the third
and second examples, respectively. 

The notion of prime is intimately tied to the covering relation in
the Tamari lattices. We need to extend this notion.
\begin{defn}
\label{def:PrimePathOfBoxOrRowCB}Let $Y$ be a Young diagram and
$d\geq1$. Let $e$ be the vertical edge at the end of row $d$ in
$Y$ for which $e$ is on a corresponding Dyck path $P$ to $Y$ (row
$d$ may be empty). The \textit{\textcolor{green}{prime path of row
$d$}} is the prime Dyck subpath of $P$ beginning with $e$. 

Suppose $B$ is the last box of its row in $Y$. The \textit{\textcolor{green}{prime
path of $B$}} is the prime path of its row. The \textit{\textcolor{green}{$B$-strip}}
is the set of all boxes in $Y$ with the right vertical edge on the
prime path of $B$. If $B$ is the last box of its row and the lowest
box of its column, then $B$ is a \textit{\textcolor{green}{corner
box}}. 
\end{defn}
In Figure \ref{fig:2-3}, prime paths are bolded, and $B$-strips
are grayed. The first example is the prime path of the third row in
the Young diagram of shape $(1)$. Each of the second and third examples
is the prime path of a box $B$ in the Young diagram of shape $(2,1,1)$.
$B$ is a corner box in the third example.

We give two versions of the covering relation. The second, in terms
of Young diagrams, follows from the correspondence of Dyck paths. 
\begin{prop}
\label{prop:CoveringDyckPaths}\cite[Proposition 2.1]{BB09} Covering
relation in the Tamari lattices: Dyck paths. Let $P$ and $P'$ be
Dyck paths. Then $P'$ covers $P$ ($P'\gtrdot P$) if and only if
there exists an east step $e$ in $P$ followed by a north step, such
that $P'$ is obtained from $P$ by swapping $e$ and the prime Dyck
subpath following it.
\end{prop}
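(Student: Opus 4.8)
The statement is quoted from \cite{BB09}, so for a self-contained argument the natural strategy is to pull the classical Tamari order back to Dyck paths along an order isomorphism and then check that a single cover there becomes exactly the swap in the statement. Recall from the introduction that $\tam_n$ is defined on bracketings of $n+1$ objects -- equivalently, on binary trees with $n$ internal nodes -- with cover relation a single application of associativity $(ab)c \to a(bc)$, i.e.\ a single tree rotation. The plan is therefore to fix a bijection $\gamma$ from Dyck paths of length $2n$ to binary trees with $n$ internal nodes that is an isomorphism of posets, and to prove that one rotation corresponds to one swap. A warning is built in: the naive first-return bijection -- decompose $P = N\,P_1\,E\,P_2$ at the first return to the diagonal and recurse on $P_1$ and $P_2$ -- is the \emph{wrong} one, since it fails to carry swaps to covers (it can turn a genuine cover into an incomparable pair); the mirror-image, last-return, decomposition must be used instead.

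Accordingly I would define $\gamma$ by the \emph{last} prime-component decomposition: writing a nonempty Dyck path as its sequence of maximal prime arches $P = C_1 C_2 \cdots C_k$ and splitting off the final arch as $C_k = N\,R\,E$, set $Q = C_1 \cdots C_{k-1}$ and let $\gamma(P)$ be the tree whose left subtree is $\gamma(Q)$ and whose right subtree is $\gamma(R)$, with $\gamma(\emptyset)$ a single leaf. Since $Q$ and $R$ are strictly shorter Dyck paths and the recursion visibly inverts, $\gamma$ is a size-preserving bijection under which the root records the last arch of $P$, the left subtree records all earlier arches, and the right subtree records the interior of the last arch. A concrete check pins down the choice: the five Dyck paths of semilength $3$ map onto the five bracketings of four objects so that the swap relation reproduces the Tamari pentagon.

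The heart of the proof is to show that one rotation corresponds under $\gamma$ to one swap. On the tree side the rotatable nodes are exactly the internal nodes whose left child is again internal; on the path side the admissible swaps are keyed to the valleys of $P$, the occurrences of an east step $e$ immediately followed by a north step, with $S$ the prime Dyck subpath opened by that north step. I would match valleys with rotatable nodes, identify the interior left subtree displaced by the rotation with the prime subpath $S$, and then verify, for the swap $A\,e\,S\,B \mapsto A\,S\,e\,B$, both that the result is again a Dyck path (its height never dips below the diagonal, because $S$ is prime and $e$ is merely carried past it) and that the last-arch decomposition of the swapped path reproduces precisely the rotated tree, i.e.\ $\gamma$ of the swapped path equals the rotation of $\gamma(P)$. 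Running this in both directions gives a bijection between the valleys of $P$ and the rotations of $\gamma(P)$, hence between the swaps available at $P$ and the covers of $\gamma(P)$; since rotations are by definition the covers in the bracketing lattice, transporting along the isomorphism $\gamma$ yields both implications of the ``if and only if,'' and in particular shows that each swap produces a genuine cover rather than a merely comparable pair.

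The main obstacle is precisely this local bookkeeping: pinning down, for a fixed valley, which contiguous steps form the following prime subpath $S$, and matching them arch by arch with the subtree moved by the rotation, so that the recursive last-arch decomposition of the swapped path visibly yields the rotated tree. Everything else is routine once the dictionary ``last arch $\leftrightarrow$ root'' and ``valley $\leftrightarrow$ internal node with internal left child'' is in place. Since the result is invoked only as cited input from \cite{BB09}, a legitimate alternative is to appeal to their proof directly and merely confirm that it is compatible with the Dyck-path and Young-diagram silhouette conventions fixed in Section~\ref{sec:Prelim}.
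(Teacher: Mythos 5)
The first thing to say is that the paper has no proof of this proposition to compare against: the statement is imported verbatim from Bernardi and Bonichon, which is exactly why \cite[Proposition 2.1]{BB09} appears inside the statement, and the text proceeds directly to the Young-diagram reformulation (Proposition \ref{prop:CoveringYoungDiagrams}) without argument. So the closing sentence of your proposal --- invoke \cite{BB09} and check that its conventions match the ones fixed in Section \ref{sec:Prelim} --- is not an ``alternative''; it is what the paper actually does, and it is the only point of contact between your write-up and the paper.

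Your self-contained route (transport the rotation order on bracketings through a bijection $\gamma$ to Dyck paths, then match one rotation with one swap) is the standard strategy and is sound in outline, but as written it is a plan rather than a proof. The entire mathematical content sits in the step you explicitly defer as ``local bookkeeping'': showing, for a fixed valley $A\,e\,S\,B$, that the last-arch decomposition of $A\,S\,e\,B$ is precisely the rotated tree, and conversely that every rotation of $\gamma(P)$ arises from some valley of $P$. Neither implication of the ``if and only if'' exists until that verification is executed, so the proposal cannot be assessed as correct or incorrect --- it is incomplete exactly where it matters. Two subsidiary points also need attention. First, your claim that the first-return bijection fails badly (turning covers into incomparable pairs) is asserted, never argued; since the Tamari lattice is self-dual, a bijection that merely reverses the order would still carry covers to covers, so the asserted failure mode is not obvious and needs an example, without which the reader cannot tell whether your last-return choice is forced or merely convenient. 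Second, matching local moves bijectively shows the two relations generate isomorphic orders, but you still owe the (standard, yet nonvacuous) fact that a single associativity step is a cover in the bracketing poset --- single steps of a generating relation need not be covers in its transitive closure, and chain lengths in $\tam_{n}$ vary precisely because no rank function is available to make this automatic.
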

\begin{figure}[h]
\begin{centering}
$P$\hspace{0.6in}$P'$\hspace{0.65in}$P$\hspace{0.85in}$P'$\hspace{0.5in}
\par\end{centering}

\begin{centering}
\includegraphics[scale=0.5]{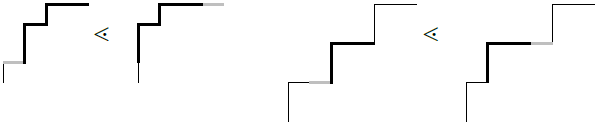}
\par\end{centering}

\caption{\label{fig:2-4}Covering relation in the Tamari lattices: Dyck Paths}
\end{figure}

\begin{prop}
\label{prop:CoveringYoungDiagrams}Covering relation in the Tamari
lattices: Young diagrams. Let $Y$ and $Y'$ be Young diagrams. Then
$Y'$ covers $Y$ ($Y'\gtrdot Y$) if and only if there exists a corner
box $B$ in $Y$, such that $Y'$ is obtained from $Y$ by removing
the $B$-strip. 
\end{prop}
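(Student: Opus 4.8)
The plan is to derive the statement directly from the Dyck-path version of the covering relation (Proposition \ref{prop:CoveringDyckPaths}) through the Dyck-path/Young-diagram correspondence recalled in Remark \ref{rem:DyckPathYoungDiagramCorrespondence}. Fix a Young diagram $Y\in\tam_{n}$ with corresponding Dyck path $P$ of length $2n$, so that the boxes of $Y$ are precisely the cells lying to the upper-left of $P$ (the region above $P$). I would first set up the dictionary between the local features appearing in the two statements, by showing that the east steps of $P$ immediately followed by a north step (the valleys of $P$) are in bijection with the corner boxes of $Y$. Given such an east step, let $c$ be the lattice point between it and the following north step; the unique cell $B$ whose bottom-right corner is $c$ has this east step as its bottom edge and this north step as its right edge, so $B$ lies above $P$ (hence $B\in Y$), is the last box of its row, and is the lowest box of its column --- that is, $B$ is a corner box. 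Conversely, since a corner box is both last in its row and lowest in its column, $P$ must run along its bottom edge (an east step) and then up its right edge (a north step), producing a valley. Moreover, by Definition \ref{def:PrimePathOfBoxOrRowCB} the prime path of the row of $B$ begins with exactly this right edge of $B$, so it coincides with the prime Dyck subpath following the east step in the sense of Proposition \ref{prop:CoveringDyckPaths}.

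With the dictionary in place, the heart of the argument is to show that the Dyck-path swap corresponds to removing a $B$-strip. Writing $S$ for the prime Dyck subpath following the east step, I note that $S$ runs from the valley point $c$ to the next point $c+(k,k)$ on its slope-one chord, which $S$ meets only at its endpoints by Definition \ref{def:PrimePathSlantLine}. Swapping the east step with $S$ leaves $P$ unchanged before the east step and after $S$, and replaces the block ``east step, then $S$'' by ``$S$, then east step''; the net effect is that the copy of $S$ is translated one unit to the left. Consequently the cells in the symmetric difference of the region above $P$ and the region above $P'$ are exactly the cells swept out by this leftward shift, namely the cells sitting immediately to the left of the north steps of $S$. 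By Definition \ref{def:PrimePathOfBoxOrRowCB} these are precisely the boxes of $Y$ whose right vertical edge lies on the prime path of $B$, i.e.\ the $B$-strip. Since moving a north-first subpath earlier pushes the path higher and hence shrinks the region above it, these boxes are removed, giving $Y'=Y\setminus(B\text{-strip})$.

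Combining the two steps yields the equivalence: a Dyck-path cover $P'\gtrdot P$ via some east-step-followed-by-north exists if and only if there is a corner box $B$ of $Y$ with $Y'$ obtained from $Y$ by deleting the $B$-strip, which is the assertion of the proposition. The point I expect to require the most care is the cell-bookkeeping in the second paragraph: verifying that the leftward translation of $S$ alters the region above the path in exactly the $B$-strip and nowhere else. Here Lemma \ref{lem:PrimePathsChar}, which controls how prime subpaths can overlap, together with the fact that $S$ stays strictly above its chord except at its endpoints, is what guarantees that the swept cells form precisely the strip and that no cells outside the columns spanned by $S$ are disturbed.
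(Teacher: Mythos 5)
Your proposal is correct and follows exactly the route the paper intends: the paper offers no written proof, stating only that the Young-diagram version ``follows from the correspondence of Dyck paths,'' and your argument is a faithful, detailed fleshing-out of that derivation (valleys $\leftrightarrow$ corner boxes, swap of the east step with the prime subpath $\leftrightarrow$ removal of the $B$-strip). The only cosmetic remark is that the appeal to Lemma \ref{lem:PrimePathsChar} at the end is not actually needed, since the cell-bookkeeping already follows from the fact that the two paths differ only by the one-unit leftward translation of $S$.
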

\begin{figure}[h]
\begin{centering}
$Y$\hspace{0.6in}$Y'$\hspace{0.65in}$Y$\hspace{0.85in}$Y'$\hspace{0.5in}
\par\end{centering}

\begin{centering}
\includegraphics[scale=0.5]{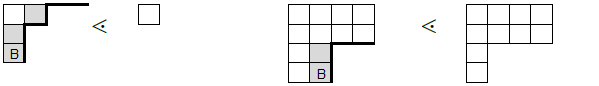}
\par\end{centering}

\caption{\label{fig:2-5}Covering relation in the Tamari lattices: Young diagrams}
\end{figure}

Examples of the covering relation for Dyck paths in Figure \ref{fig:2-4}
correspond to the examples for Young diagrams in Figure \ref{fig:2-5}.
In the Dyck path examples, the east step of $P$ referenced in the
proposition is grayed, and the prime Dyck subpath following it is
bolded. In the Young diagram examples, the prime path of the corner
box $B$ in $Y$ is bolded, and the $B$-strip is grayed. 

\begin{figure}[h]
\begin{minipage}[c]{0.35\columnwidth}%
\begin{center}
$\tam_{3}$
\par\end{center}

\begin{center}
\includegraphics[scale=0.5]{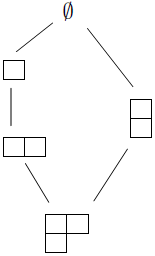}
\par\end{center}%
\end{minipage}\hfill{}%
\begin{minipage}[c]{0.55\columnwidth}%
\begin{center}
$\tam_{4}$
\par\end{center}

\begin{center}
\includegraphics[scale=0.5]{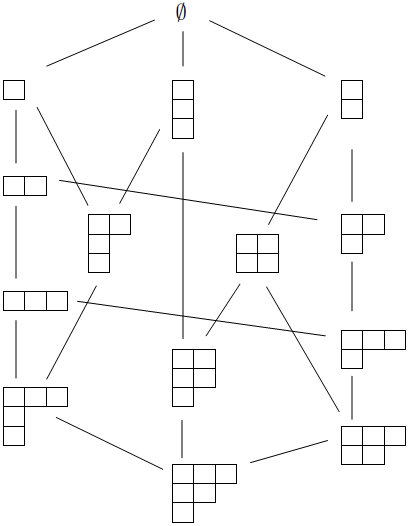}
\par\end{center}%
\end{minipage}\caption{\label{fig:2-6}Hasse diagrams for $\tam_{3}$ and $\tam_{4}$ in
terms of Young diagrams}
\end{figure}

The Hasse diagrams for $\tam_{3}$ and $\tam_{4}$ are shown in Figure
\ref{fig:2-6}. The maximum element $\hat{1}$ in $\tam_{n}$ is the
null diagram, and the minimum element $\hat{0}$ is $\delta_{n-1}$.

\section{\label{sec:psi}Representation of maximal chains}

In this section, we relate an efficient approach to work with certain
saturated chains in the Tamari lattices through the map $\psi$. In
particular, $\psi$ assigns each maximal chain to a unique tableau.
We establish basic properties and explicitly characterize chains in
terms of tableaux. We then enter into more technical material and
conclude with Theorem \ref{thrm:Alpha}, where we define the first
piece of $\phi_{i,n}^{r}$.
\begin{defn}
\label{def:Tableau}For our purposes, a \textit{\textcolor{green}{tableau}}
$T$ of shape $\lambda$ is obtained by filling each box of the Young
diagram of shape $\lambda$ with a positive integer, where each row
strictly increases when read left to right, and each column weakly
increases when read top to bottom. The \textit{\textcolor{green}{length}}
of $T$, denoted $l(T)$, is the number of its distinct labels. We
also require that its set of labels is precisely $[l(T)]$. (Often
in the literature, this is the definition of a \textit{row-strict
tableau}.)

For $r\in[l(T)]$, the \textit{\textcolor{green}{$r$-set}} is the
set of all boxes in $T$ labeled with $r$. The label in the box $(x,y)$
is denoted $T(x,y)$. For $r\in\{0,1,\ldots,l(T)\}$, the tableau
obtained from $T$ made of all the elements less than or equal to
$r$ is denoted \textit{\textcolor{green}{$T^{(r)}$}}. 
\end{defn}

\begin{defn}
\label{def:Psi}Let $C=(\emptyset=Y_{0}\gtrdot Y_{1}\gtrdot\cdots\gtrdot Y_{l})$
be a saturated chain under the Tamari order in terms of Young diagrams.
As $C$ is traversed upwards in the Hasse diagram, boxes are removed
from $Y_{l}$. For each $r\in[l]$, label the boxes removed in the
transition $Y_{r-1}\gtrdot Y_{r}$ with $r$. The resulting tableau,
of the shape of $Y_{l}$, is $\psi(C)$. A \textit{\textcolor{green}{$\psi$-tableau}}
is an element in the image of $\psi$.
\end{defn}
Examples of $\psi$-tableaux are shown in Figure \ref{fig:3-1}. The
nine maximal chains of $\tam_{4}$ are shown in Figure \ref{fig:3-2}.
We defined $\psi$ on maximal chains in \cite[Definition 3.1]{FN13}. 

\begin{figure}[h]
\begin{centering}
\includegraphics[scale=0.5]{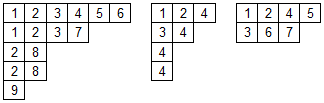}
\par\end{centering}

\caption{\label{fig:3-1}$\psi$-tableaux}
\end{figure}

\begin{figure}[h]
\begin{centering}
\includegraphics[scale=0.5]{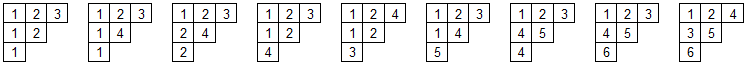}
\par\end{centering}

\caption{\label{fig:3-2}The nine maximal chains of $\tam_{4}$ via $\psi$}
\end{figure}

\begin{prop}
\label{prop:PsiIsInjective}$\psi$ is injective.\end{prop}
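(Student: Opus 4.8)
The plan is to show that the map $\psi$ can be inverted, so that the saturated chain $C$ is completely recoverable from the tableau $\psi(C)$. Since each $\psi$-tableau has shape equal to that of the bottom element $Y_l$ of the chain, and since $\psi(C)$ records for each $r\in[l]$ exactly which boxes are removed in the step $Y_{r-1}\gtrdot Y_r$, the $r$-set of $\psi(C)$ is precisely the set of boxes deleted at stage $r$. The key observation is that the intermediate diagram $Y_r$ is recovered as the shape of $(\psi(C))^{(r)}$ read the other way: $Y_r$ consists of those boxes of $Y_l$ carrying a label strictly greater than $r$, together with the understanding that removing labels one class at a time reconstructs each $Y_r$ uniquely.

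\begin{proof}
Suppose $C=(\emptyset=Y_{0}\gtrdot Y_{1}\gtrdot\cdots\gtrdot Y_{l})$ and $D=(\emptyset=Z_{0}\gtrdot Z_{1}\gtrdot\cdots\gtrdot Z_{m})$ are saturated chains with $\psi(C)=\psi(D)=T$. By construction $\psi(C)$ has the shape of $Y_{l}$ and $\psi(D)$ has the shape of $Z_{m}$, so $sh(Y_{l})=sh(Z_{m})$ and hence $Y_{l}=Z_{m}$. The length $l(T)$ equals the number of cover steps in each chain, so $l=l(T)=m$. It remains to show $Y_{r}=Z_{r}$ for every $r$. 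For each $r\in\{0,1,\ldots,l\}$, the boxes removed in the first $r$ cover steps of $C$ are exactly those labeled $1,2,\ldots,r$ in $\psi(C)$, namely the boxes of $T^{(r)}$. Therefore the boxes remaining in $Y_{r}$ are precisely the boxes of $Y_{l}$ whose label in $T$ exceeds $r$; that is, $Y_{r}$ is determined as a set of boxes by $T$ alone. The identical description applies to $D$, giving $Y_{r}=Z_{r}$ for all $r$. Hence $C=D$ and $\psi$ is injective.
\end{proof}

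The only point requiring care is the claim that the boxes removed in the first $r$ steps are exactly the boxes labeled at most $r$; this is immediate from Definition \ref{def:Psi}, since the transition $Y_{r-1}\gtrdot Y_{r}$ is assigned label $r$ and the removed boxes partition $Y_{l}$ according to the step at which they disappear. I expect the main (and really the only) subtlety to be bookkeeping the direction of the labeling: the chain is traversed \emph{upward} in the Hasse diagram from $Y_{l}$ to $\emptyset$, so larger labels correspond to boxes removed later, and one must confirm that the partial diagrams $Y_{r}$ so reconstructed are genuine Young diagrams matching the intermediate vertices. No use of the covering relation (Proposition \ref{prop:CoveringYoungDiagrams}) beyond its role in defining $\psi$ is needed; injectivity follows purely from the fact that $\psi$ encodes the deletion history.
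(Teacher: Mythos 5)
Your strategy is the same as the paper's: invert $\psi$ by reconstructing each intermediate diagram of the chain from the label classes of $T=\psi(C)$. However, your reconstruction formula is stated backwards, and this is exactly the directional bookkeeping you yourself flagged as the one subtle point. By Definition \ref{def:Psi}, a box labeled $j$ is removed in the transition $Y_{j-1}\gtrdot Y_{j}$, i.e., it lies in $Y_{j}$ but not in $Y_{j-1}$. Since $\emptyset=Y_{0}\subsetneq Y_{1}\subsetneq\cdots\subsetneq Y_{l}$ as sets of boxes, the boxes of $Y_{r}$ are precisely those removed in the transitions $Y_{0}\gtrdot Y_{1},\ldots,Y_{r-1}\gtrdot Y_{r}$, i.e., those with labels \emph{at most} $r$; this is the paper's statement $Y_{r}=\{(x,y)\in T^{(r)}\}$. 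Your claim that ``$Y_{r}$ consists of those boxes of $Y_{l}$ carrying a label strictly greater than $r$'' is the complementary, false, set: at $r=0$ it would make $Y_{0}$ equal to all of $Y_{l}$, whereas $Y_{0}=\emptyset$, and at $r=l$ it would make $Y_{l}$ empty. The slip occurs at your ``Therefore'': you correctly observe that the boxes removed in the first $r$ cover steps are those of $T^{(r)}$, but those boxes are not the complement of $Y_{r}$ --- they \emph{are} $Y_{r}$. The steps encountered when traversing upward from $Y_{l}$ toward $\emptyset$ are those labeled $l,l-1,\ldots$, so the small labels mark boxes removed \emph{last}, near the top of the chain, not first.

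The repair is immediate: replace ``label exceeds $r$'' by ``label at most $r$,'' i.e., $Y_{r}=\{(x,y)\in T^{(r)}\}$. With that correction, the rest of your argument --- both chains' intermediate diagrams are given by the same formula in $T$, hence coincide termwise --- goes through and becomes essentially identical to the paper's proof. But as written, the central reconstruction claim is false, so the proof is not correct in its current form.
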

\begin{proof}
Let $T$ be a $\psi$-tableau. Then $T=\psi(C)$ for some $C=(\emptyset=Y_{0}\gtrdot Y_{1}\gtrdot\cdots\gtrdot Y_{l(T)})$.
The length of $T$ is the length of $C$. For each $r\in\{0,1,\ldots,l(T)\}$,
$Y_{r}=\{(x,y)\in T^{(r)}\}$.
\end{proof}
Because $\psi$ is injective, for each $n\geq1$ and for each Young
diagram $Y\in\tam_{n}$, $\psi$ extends to a bijection of sets between
saturated chains in $\tam_{n}$ of length $l$ whose minimal element
is $Y$ and maximal element is $\emptyset$, and $\psi$-tableaux
of length $l$ and the shape of $Y$. Since we identify vertices in
$\tam_{n}$ as Young diagrams contained in $\delta_{n-1}$, $\psi$
induces the following examples of bijective correspondences between:
\begin{itemize}
\item elements of $\chain_{i}(n)$ and $\psi$-tableaux of length $n+i$
and shape $\delta_{n-1}$,
\item maximal chains in $\tam_{n}$ and $\psi$-tableaux of shape $\delta_{n-1}$,
and
\item saturated chains in $\tam_{n}$ whose maximal element is $\emptyset$
and $\psi$-tableaux contained in $\delta_{n-1}$.\end{itemize}
\begin{defn}
\label{def:RSetBeginsEndsOuterDiagonal}A nonempty subset of boxes
of a Young diagram \textit{\textcolor{green}{begins}} and \textit{\textcolor{green}{ends}}
in its rows of minimum and maximum index, respectively. Similarly,
we may refer to the \textit{\textcolor{green}{begin-box}} or \textit{\textcolor{green}{end-box}}
of that subset.

If $Y\neq\emptyset$, then the \textit{\textcolor{green}{outer diagonal}}
of $Y$ is the set of boxes $\{(x,y)\in Y\mid x+y=m\}$ where $m$
is the maximum of $\{x+y\mid(x,y)\in Y\}$; otherwise, the outer diagonal
of $Y=\emptyset$ is the empty set. 
\end{defn}
The outer diagonal of a $\psi$-tableau of shape $\delta_{n-1}$,
for $n\geq1$, is the set of boxes $\{(k,n-k)\mid k\in[n-1]\}$.

Next we characterize $\psi$-tableau. Statements \ref{prop:PsiTabChar}(\ref{enu:PsiTabChar2})-(\ref{enu:PsiTabChar3}),
listed for convenience, follow from \ref{prop:PsiTabChar}(\ref{enu:PsiTabChar1}).
\begin{prop}
\label{prop:PsiTabChar}Characterization of $\psi$-tableaux. Let
$T$ be a tableau and $l=l(T)$. For each $k\in[l]$, let $B_{k}$
be the end-box of the $k$-set. Then:
\begin{enumerate}
\item \label{enu:PsiTabChar1}$T$ is a $\psi$-tableau if and only if for
each $k\in[l]$, the $k$-set is the $B_{k}$-strip in $T^{(k)}$. 
\item \label{enu:PsiTabChar2}Fix $r\in\{0,1,\ldots,l\}$. $T$ is a $\psi$-tableau
if and only if $T^{(r)}$ is a $\psi$-tableau, and for each $k\in\{r+1,r+2,\ldots,l\}$,
the $k$-set is the $B_{k}$-strip in $T^{(k)}$.
\item \label{enu:PsiTabChar3}If $l>0$, then $T$ is a $\psi$-tableau
if and only if $T^{(l-1)}$ is a $\psi$-tableau, and the $l$-set
is the $B_{l}$-strip in $T$. 
\end{enumerate}
\end{prop}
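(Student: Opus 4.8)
The plan is to prove part (\ref{enu:PsiTabChar1}) directly from the definitions and then derive parts (\ref{enu:PsiTabChar2}) and (\ref{enu:PsiTabChar3}) as immediate consequences. The central observation is that, by Definition \ref{def:Psi}, a $\psi$-tableau records a saturated chain $\emptyset = Y_0 \gtrdot Y_1 \gtrdot \cdots \gtrdot Y_l$ where the $k$-set is exactly the set of boxes removed in the transition $Y_{k-1} \gtrdot Y_k$. By Proposition \ref{prop:CoveringYoungDiagrams}, each such transition removes a $B$-strip for some corner box $B$ of $Y_{k-1}$. So the real content is a dictionary: the diagram $Y_{k-1}$ is precisely $T^{(k)}$ (the sub-tableau of labels $\le k$), the diagram $Y_k$ is $T^{(k-1)}$, and the box removed last — the corner box $B$ — is the end-box $B_k$ of the $k$-set.

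First I would establish the forward direction of (\ref{enu:PsiTabChar1}). Assume $T = \psi(C)$. As recorded in the proof of Proposition \ref{prop:PsiIsInjective}, we have $Y_r = \{(x,y) \in T^{(r)}\}$ for each $r$, so $T^{(k)} = Y_{k-1} \cup (\text{$k$-set})$ and the $k$-set is exactly the boxes removed in passing from $Y_{k-1}$ to $Y_k$. By the covering relation (Proposition \ref{prop:CoveringYoungDiagrams}), this removed set is a $B$-strip for a corner box $B$ of $Y_{k-1}$, computed inside $Y_{k-1} = T^{(k)}$. It then remains to identify $B$ with the end-box $B_k$ of the $k$-set. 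This follows because a $B$-strip is, by Definition \ref{def:PrimePathOfBoxOrRowCB}, the set of boxes whose right vertical edge lies on the prime path of $B$; since the prime path descends to the row of $B$ and $B$ is its lowest, rightmost box, $B$ is the box of the strip of maximum row index, i.e. the end-box. Hence the $k$-set is the $B_k$-strip in $T^{(k)}$.

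For the converse, assume that for each $k \in [l]$ the $k$-set is the $B_k$-strip in $T^{(k)}$. I would construct the chain by setting $Y_r = \{(x,y) \in T^{(r)}\}$ for $r \in \{0,1,\dots,l\}$, so that $Y_l = T$ and $Y_0 = \emptyset$. The hypothesis says that passing from $Y_{k}$ to $Y_{k-1}$ removes exactly a $B_k$-strip, where $B_k$ is the end-box of the $k$-set; the one point requiring care is to verify that $B_k$ is a genuine \emph{corner box} of $Y_k = T^{(k)}$ — that is, the last box of its row and the lowest box of its column — so that Proposition \ref{prop:CoveringYoungDiagrams} applies and $Y_{k-1} \gtrdot Y_k$ is an honest cover. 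This is the main obstacle: it is where the row-strict/column-weakly-increasing structure of a tableau (Definition \ref{def:Tableau}) must be combined with the $B_k$-strip hypothesis to force $B_k$ into corner position. Granting this, each transition is a cover, the sequence is a saturated chain $C$, and by construction $\psi(C) = T$, so $T$ is a $\psi$-tableau.

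Finally, parts (\ref{enu:PsiTabChar2}) and (\ref{enu:PsiTabChar3}) drop out of (\ref{enu:PsiTabChar1}) by restricting attention to an initial segment of the labels. For (\ref{enu:PsiTabChar2}), note that $T^{(r)}$ is a $\psi$-tableau iff the strip condition holds for all $k \in [r]$ (applying (\ref{enu:PsiTabChar1}) to $T^{(r)}$, and observing that $(T^{(r)})^{(k)} = T^{(k)}$ for $k \le r$); combined with the stated condition for $k \in \{r+1,\dots,l\}$, this reproduces the full strip condition for all $k \in [l]$, which is (\ref{enu:PsiTabChar1}) for $T$. Part (\ref{enu:PsiTabChar3}) is the special case $r = l-1$, since then $T^{(r)} = T^{(l-1)}$ and the single remaining condition is that the $l$-set is the $B_l$-strip in $T^{(l)} = T$.
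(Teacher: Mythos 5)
Your outline coincides with the paper's own proof: identify the chain elements with the truncations via $Y_r=\{(x,y)\in T^{(r)}\}$, reduce each cover $Y_{k-1}\gtrdot Y_k$ to the covering relation of Proposition \ref{prop:CoveringYoungDiagrams}, identify the corner box of the removed strip with the end-box $B_k$ (your argument that a $B$-strip ends in the row of $B$ is fine), and deduce parts (\ref{enu:PsiTabChar2}) and (\ref{enu:PsiTabChar3}) from part (\ref{enu:PsiTabChar1}) using $(T^{(r)})^{(k)}=T^{(k)}$ for $k\leq r$; those last two reductions are correct as you state them. The gap is in the converse direction of part (\ref{enu:PsiTabChar1}): the step you single out as ``the main obstacle'' --- that the end-box $B_k$ is a genuine corner box of $T^{(k)}$ --- is the entire substantive content of that direction, and you never prove it; ``granting this'' leaves the proof unfinished. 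The verification is in fact two lines from Definition \ref{def:Tableau} alone, and it needs no help from the strip hypothesis (so your claim that the tableau axioms must be ``combined with the $B_k$-strip hypothesis'' misdiagnoses the situation): write $B_k=(x,y)$, so $T(x,y)=k$ and $x$ is the largest row index carrying the label $k$. If $(x,y+1)$ were in $T^{(k)}$, row-strictness would give $k\geq T(x,y+1)>T(x,y)=k$, absurd; so $B_k$ is the last box of its row in $T^{(k)}$. If $(x+1,y)$ were in $T^{(k)}$, weak column increase would give $k\geq T(x+1,y)\geq T(x,y)=k$, hence $T(x+1,y)=k$, putting a box of the $k$-set strictly below its end-box, absurd; so $B_k$ is the lowest box of its column. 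Hence $B_k$ is a corner box of $T^{(k)}$, and Proposition \ref{prop:CoveringYoungDiagrams} applies to every transition. (This is exactly the one-sentence fact the paper asserts without proof in its argument.)

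Separately, your opening ``dictionary'' is indexed backwards: by Definition \ref{def:Psi} (and the proof of Proposition \ref{prop:PsiIsInjective}) one has $Y_r=T^{(r)}$ for every $r$; the label-$k$ boxes are removed in passing upward from $Y_k=T^{(k)}$ to $Y_{k-1}=T^{(k-1)}$, and the corner box and its strip live in the lower, larger diagram $Y_k=T^{(k)}$, not in ``$Y_{k-1}=T^{(k)}$'' as you write. Your subsequent formulas use the correct identification, so this is a repairable notational slip rather than a mathematical error, but as written your first two paragraphs contradict each other and should be reconciled in any final write-up.
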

\begin{proof}
(\ref{enu:PsiTabChar1}). For each $0\leq k\leq l$, let $Y_{k}=\{(x,y)\in T^{(k)}\}$.
Then $T$ is a $\psi$-tableau if and only if $\emptyset=Y_{0}\gtrdot Y_{1}\gtrdot\cdots\gtrdot Y_{l}$.
Each $Y_{k}$ has the shape of $T^{(k)}$, and $Y_{0}=\emptyset$.
Furthermore, for each $k\in[l]$, $B_{k}$ is a corner box in $Y_{k}$
and is the unique box of maximum row index of all the boxes removed
from $Y_{k}$ to obtain $Y_{k-1}$. Thus, by Proposition \ref{prop:CoveringYoungDiagrams},
$Y_{k-1}\gtrdot Y_{k}$ if and only if the set of boxes removed from
$Y_{k}$ to obtain $Y_{k-1}$ is the $B_{k}$-strip in $Y_{k}$ if
and only if the $k$-set is the $B_{k}$-strip in $T^{(k)}$.
\end{proof}
In each tableau of Figure \ref{fig:3-3}, the $(3,2)$-strip, $\{(1,3),(2,2),(3,2)\}$,
is grayed and the prime path of $(3,2)$ is bolded. In the first tableau
(of length $4$), the $4$-set, $\{(2,2),(3,2)\}$, does not agree
with the $(3,2)$-strip, so is not a $\psi$-tableau. The second tableau
is a $\psi$-tableau. 

\begin{figure}[h]
\begin{centering}
\includegraphics[scale=0.5]{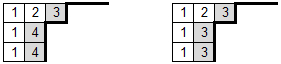}
\par\end{centering}

\caption{\label{fig:3-3}The first tableau is not a $\psi$-tableau; the second
one is.}
\end{figure}

The \textit{warm up} properties in the following proposition come
directly from basic definitions, the covering relation (Proposition
\ref{prop:CoveringYoungDiagrams}) and the definition of $\psi$ (Definition
\ref{def:Psi}). 
\begin{prop}
\label{prop:BasicProps}Basic Properties. \\
Let $Y$ be a Young diagram, and suppose $B\in Y$ is the last box
of the $x_{B}$-th row. Then:
\begin{enumerate}
\item \label{enu:BasicProps3}Each box in the outer diagonal is a corner
box.
\item \label{enu:BasicProps1}Let $h\geq1$. The height of the prime path
of $B$ is $h$ if and only if the $B$-strip is the set of last boxes
of all rows $j$ for which $x_{B}-h+1\leq j\leq x_{B}$.
\item Suppose $B'\in Y$ is the last box of the $x_{B'}$-th row. Then:

\begin{enumerate}
\item \label{enu:BasicProps4a}$B'$ is not in the $B$-strip if and only
if the $B$-strip is entirely above or entirely below row $x_{B'}$.
\item \label{enu:BasicProps4b}$B'$ is in the $B$-strip if and only if
the prime path of $B'$ is a subpath of the prime path of $B$.
\item \label{enu:BasicProps4c}Suppose that $B$ is in the outer diagonal
and that $x_{B'}>x_{B}$. Then the $B'$-strip begins in a row of
index greater than $x_{B}$.
\end{enumerate}
\end{enumerate}
Let $T$ be a $\psi$-tableau and $l=l(T)$. Then: 
\begin{enumerate}[start=4]
\item \label{enu:BasicProps5}Each box in the outer diagonal is the end-box
of a $j$-set, for some $j\in[l]$. Thus there are no repeat labels
in the outer diagonal. 
\item \label{enu:BasicProps6}For each $r\in\{0,1,\ldots,l\}$, $T^{(r)}$
is a $\psi$-tableau of length $r$. Also $\emptyset=T^{(0)}\gtrdot sh(T^{(1)})\gtrdot\cdots\gtrdot sh(T^{(l-1)})\gtrdot sh(T^{(l)})=sh(T)$.
\end{enumerate}
\end{prop}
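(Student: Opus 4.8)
The plan is to treat the list for what it is---a sequence of routine unwindings of the definitions, the covering relation (Proposition \ref{prop:CoveringYoungDiagrams}) and the definition of $\psi$ (Definition \ref{def:Psi})---and to concentrate effort on the one genuinely geometric statement, property (\ref{enu:BasicProps4c}), from which (\ref{enu:BasicProps5}) then follows cheaply. Throughout I would pass freely between a Young diagram and its boundary Dyck path, since prime paths and $B$-strips are read off the path while the outer diagonal is read off the diagram. For (\ref{enu:BasicProps3}), if $(x,y)$ lies on the outer diagonal then $x+y$ is maximal, so neither $(x,y+1)$ nor $(x+1,y)$ can lie in $Y$; hence $(x,y)$ is at once the last box of its row and the lowest box of its column, i.e.\ a corner box. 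For (\ref{enu:BasicProps1}) I would simply unwind Definition \ref{def:PrimePathOfBoxOrRowCB}: the prime path of $B$ begins with the right edge of $B$ and has exactly $h$ north steps, and reading the boundary these are the right edges of the last boxes of $h$ consecutive rows ending in row $x_B$, namely rows $x_B-h+1,\dots,x_B$; since the $B$-strip is by definition the set of boxes whose right edge lies on this path, it is exactly that set of last boxes.

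Properties (\ref{enu:BasicProps4a}) and (\ref{enu:BasicProps4b}) drop out of (\ref{enu:BasicProps1}) together with Lemma \ref{lem:PrimePathsChar}. By (\ref{enu:BasicProps1}) the last box $B'$ of row $x_{B'}$ lies in the $B$-strip if and only if $x_{B'}\in\{x_B-h+1,\dots,x_B\}$, which is precisely the trichotomy of (\ref{enu:BasicProps4a}); and since the prime path of $B'$ starts with the right edge of $B'$, that edge lying on the prime path of $B$ forces the two prime subpaths to share a whole step, so by Lemma \ref{lem:PrimePathsChar} they are nested or equal, with the prime path of $B'$ the inner one, giving (\ref{enu:BasicProps4b}) (the converse being immediate).

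The crux is (\ref{enu:BasicProps4c}). Here I would record the boundary in coordinates, placing the $k$-th north step of the length-$2n$ path at horizontal position $e_k$, so that row $x$ corresponds to the step $k=n+1-x$ and $x+\lambda_x=n+1-(k-e_k)$, where $\lambda_x$ is the length of row $x$. Being on the outer diagonal is then exactly the condition that $B$'s north step attains the minimal value $\mu$ of $k-e_k$ over $P$, i.e.\ its top sits at minimal height above the main diagonal $y=x$, so the foot of $B$'s north step lies on the slope-one line at height $\mu-1$. Now $B'$ has $x_{B'}>x_B$, hence the smaller step index $k_{B'}<k_B$; its prime path starts at that earlier step and would have to climb to step $k_B$ to reach row $x_B$. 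But the slope-one chord of $B'$'s prime path sits at height $k_{B'}-1-e_{k_{B'}}\ge\mu-1$, i.e.\ weakly above the line through the foot of $B$'s north step, and a prime subpath meets its chord only at its two endpoints and otherwise stays strictly above it. Hence $B'$'s prime path cannot pass through the foot of $B$'s north step and can at best end there, so it never climbs into row $x_B$ or above---which is exactly the assertion that the $B'$-strip begins in a row of index exceeding $x_B$. I expect this chord/primality argument to be the only real work in the proposition.

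Finally, (\ref{enu:BasicProps5}) follows from (\ref{enu:BasicProps4c}) and (\ref{enu:BasicProps3}). An outer-diagonal box $D$ of $T$, labelled $j$, still attains the maximal value of $x+y$ within $Y_j=\{(x,y)\in T^{(j)}\}$, so it lies on the outer diagonal of $Y_j$ and is a corner box there. If the $j$-set---the $B_j$-strip removed at that step---contained a box in a row below $D$, then its end-box $B_j$ would be a last box with $x_{B_j}>x_D$, and applying (\ref{enu:BasicProps4c}) inside $Y_j$ (with $D$ on the outer diagonal) the $B_j$-strip would begin strictly below $D$, contradicting $D$'s membership in it. Thus $D$ is the end-box of its $j$-set; as the end-box of a set is unique, distinct outer-diagonal boxes carry distinct labels. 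For (\ref{enu:BasicProps6}) I would just truncate: writing $T=\psi(C)$ with $C=(\emptyset=Y_0\gtrdot\cdots\gtrdot Y_l)$, each $T^{(r)}=\psi(\emptyset=Y_0\gtrdot\cdots\gtrdot Y_r)$ is a $\psi$-tableau of length $r$ by Definition \ref{def:Psi} and Proposition \ref{prop:PsiTabChar}, and the displayed chain of covers is exactly $C$ read through $Y_k=sh(T^{(k)})$.
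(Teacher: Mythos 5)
Your proof is correct, but note that the paper itself offers \emph{no} proof of this proposition: it is introduced as a list of ``warm up'' properties that ``come directly from basic definitions, the covering relation (Proposition \ref{prop:CoveringYoungDiagrams}) and the definition of $\psi$ (Definition \ref{def:Psi})'', and no argument whatsoever follows the statement. So your write-up is by necessity a different route --- you supply the arguments the author left implicit. Your handling of parts (\ref{enu:BasicProps3}), (\ref{enu:BasicProps1}), (\ref{enu:BasicProps4a}), (\ref{enu:BasicProps4b}), (\ref{enu:BasicProps5}) and (\ref{enu:BasicProps6}) is exactly the intended definitional unwinding (for (\ref{enu:BasicProps6}) the key identity $Y_r=\{(x,y)\in T^{(r)}\}$ is the one already used in the proof of Proposition \ref{prop:PsiIsInjective}), and you correctly identify part (\ref{enu:BasicProps4c}) as the only item with real content; your chord-and-primality argument for it is sound, as is the deduction of (\ref{enu:BasicProps5}) from (\ref{enu:BasicProps4c}) applied inside $T^{(j)}$.

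Two imprecisions are worth fixing. First, you define $\mu$ as ``the minimal value of $k-e_k$ over $P$''; taken literally this minimum is always $1$, since every Dyck path begins with a north step ($e_1=0$), and more generally the north steps of the empty rows below the diagram have small $k-e_k$. Being on the outer diagonal of $Y$ is the statement that $k_B-e_{k_B}$ is minimal among north steps that are \emph{right edges of boxes of $Y$} (equivalently, among nonempty rows); e.g.\ for $Y=(1)$ in $\tam_3$ one has $k_B-e_{k_B}=2$ while the literal minimum over $P$ is $1$. With $\mu$ defined as the minimum over box-bounding steps, both of your claims --- that the foot of $B$'s north step sits at height $\mu-1$, and that the chord of $B'$'s prime path sits at height $k_{B'}-1-e_{k_{B'}}\ge\mu-1$ (valid precisely because row $x_{B'}$ contains the box $B'$) --- are correct, and the rest of the argument goes through verbatim. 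Second, in (\ref{enu:BasicProps4b}) you assert without justification that the prime path of $B'$ is the ``inner'' one; the one-line reason is that when $B'\neq B$ the right edge of $B'$ is a non-initial step of the prime path of $B$, so its foot lies strictly above that path's chord, hence the chord of the prime path of $B'$ lies strictly above the chord of the prime path of $B$, and by the comparison of chords in the proof of Lemma \ref{lem:PrimePathsChar} the path with the higher chord is the proper subpath.
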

The remaining material of this section is more technical and is necessary
to verify properties of the map $\phi_{i,n}^{r}$ defined in the next
section. 
\begin{prop}
\label{prop:DyckPathsDecreasingHeights}Let $C=(\hat{1}=P_{0}\gtrdot P_{1}\gtrdot\cdots\gtrdot P_{l})$
be a saturated chain in $\tam_{n}$ in terms of Dyck paths (of length
$2n$). Let $k\in[n]$. For each $0\leq j\leq l$, let $h_{j}$ be
the height of the prime Dyck subpath beginning with the $k$-th north
step of $P_{j}$. Then $h_{0}\geq h_{1}\geq\cdots\geq h_{l}$.\end{prop}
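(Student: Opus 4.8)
The plan is to reduce the statement about the whole chain to a single covering step and then analyze how one application of the Dyck-path covering move (Proposition \ref{prop:CoveringDyckPaths}) affects the height of the prime subpath rooted at a fixed north step. Since each $P_{j}\gtrdot P_{j+1}$ is a cover, it suffices by transitivity to prove the one-step inequality $h_{j}\ge h_{j+1}$ for every $j$; chaining these yields $h_{0}\ge h_{1}\ge\cdots\ge h_{l}$. So I fix a single cover, set $P:=P_{j+1}\lessdot P_{j}=:P'$, and fix $k\in[n]$; writing $h(P)$ for the height of the prime subpath beginning with the $k$-th north step of a length-$2n$ path $P$ (so $h_{j}=h(P_{j})$), the goal becomes $h(P')\ge h(P)$.

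By Proposition \ref{prop:CoveringDyckPaths} I would write $P=\alpha\,e\,\pi\,\beta$, where $e$ is the distinguished east step, $\pi$ is the prime subpath immediately following $e$, and $\alpha,\beta$ are the portions before $e$ and after $\pi$; then $P'=\alpha\,\pi\,e\,\beta$. Because $e$ carries no north step, the two paths have the same north steps in the same left-to-right order, so ``the $k$-th north step'' refers to the same step of $\alpha$, $\pi$, or $\beta$ in both paths. The key device is the value function $v=y-x$ (signed distance above the main diagonal): $\alpha$ ends at some height $v_{0}$, and the prime subpath beginning with a north step $N$ is exactly the first-return excursion of $v$ to the level $w$ of the foot of $N$, whose height is its number of north steps. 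The swap takes the excursion $\pi$ sitting at base level $v_{0}-1$ in $P$ and raises it by one unit, to base level $v_{0}$ in $P'$, while sliding $e$ to its far side.

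With this set-up the proof is a short case analysis on the location of the $k$-th north step $N_{k}$, matching the configurations of Lemma \ref{lem:PrimePathsChar} for $\pi$ and the prime subpath $Q$ at $N_{k}$. If $N_{k}$ lies in $\beta$, then $Q$ is determined entirely by the identical tail $\beta$ and is unchanged. If $N_{k}$ lies in $\pi$, then since $\pi$ is prime every first-return excursion rooted inside $\pi$ closes up within $\pi$ (an interior foot has value strictly above the base, which $\pi$ revisits only at its own endpoints), so $Q$ is again unchanged; this includes $N_{k}$ equal to the first north of $\pi$, whose excursion is all of $\pi$ in both paths. The only change can occur when $N_{k}$ lies in $\alpha$ and its excursion has not returned to its starting level $w$ upon reaching the end of $\alpha$, forcing $w\le v_{0}-1$. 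I expect exactly two outcomes here: if $w\le v_{0}-2$, then in both $P$ and $P'$ the excursion stays above $w$ through the copy of $\pi$ and closes at the same point of $\beta$, giving $h(P')=h(P)$; if $w=v_{0}-1$, then in $P$ the excursion closes immediately after $e$ and omits $\pi$, whereas in $P'$ it must traverse the raised copy of $\pi$ before $e$ drops $v$ to $w$, so it absorbs $\pi$ and $h(P')=h(P)+\mathrm{ht}(\pi)>h(P)$. In every case $h(P')\ge h(P)$, which is the one-step claim.

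The main obstacle is the bookkeeping of this last case: I must verify that raising $\pi$ by one unit never causes an excursion that already contained $\pi$ in $P$ to lose it (which would \emph{decrease} a height), and conversely pin down precisely when $\pi$ becomes newly absorbed. The first-return viewpoint makes this monotonicity transparent, and Lemma \ref{lem:PrimePathsChar} guarantees that $Q$ and $\pi$ can only be disjoint, share a single endpoint, be nested, or coincide, so no pathological partial overlap of the two prime subpaths can arise to spoil the comparison.
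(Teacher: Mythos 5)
Your proposal is correct, and its skeleton is the same as the paper's: reduce to a single covering step, then do a case analysis whose outcome is that the height is preserved in every configuration except the one where the tracked prime subpath ends exactly where the swapped prime subpath begins, in which case it strictly increases by absorbing the swapped subpath. The difference is organizational: the paper fixes $Q$ (the prime subpath at the $k$-th north step of the lower path) and $R$ (the subpath that shifts in the cover) and runs through the four configurations of Lemma \ref{lem:PrimePathsChar} directly, asserting the equality of heights in each non-critical case, whereas you organize the cases by the location of the $k$-th north step within the factorization $P=\alpha\,e\,\pi\,\beta$, $P'=\alpha\,\pi\,e\,\beta$ from Proposition \ref{prop:CoveringDyckPaths} and verify each case by the level function $v=y-x$ and the first-return description of prime subpaths. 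Your cases match the paper's one-for-one ($N_k$ in $\beta$ or closing inside $\alpha$ corresponds to disjoint/touching configurations; $N_k$ in $\pi$ to $Q\subseteq R$; $N_k$ in $\alpha$ with foot level $w\leq v_0-2$ to $R\subsetneq Q$; and $w=v_0-1$ to the critical case where $Q$ ends where $R$ begins), so what your formalism buys is not a new route but explicit, self-contained justifications of the height-preservation claims that the paper's terser proof leaves to the reader.
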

\begin{proof}
By induction on $l$, it suffices to show $h_{l-1}\geq h_{l}$ for
$l>0$. Let $Q$ be the prime Dyck subpath of $P_{l}$ beginning with
its $k$-th north step. Let $R$ be the prime Dyck subpath of $P_{l}$
that shifts to the left one unit in the transition $P_{l-1}\gtrdot P_{l}$
(see Proposition \ref{prop:CoveringDyckPaths}). We have a few cases
to check as outlined in Lemma \ref{lem:PrimePathsChar}. If $Q\cap R=\emptyset$
or $Q\subsetneq R$ or $R\subsetneq Q$ or $Q=R$, then $h_{l-1}=h_{l}$.
One case remains: $Q\cap R$ is a single point. If $R$ ends where
$Q$ begins, then $h_{l-1}=h_{l}$. If $Q$ ends where $R$ begins,
then the trailing east step of $Q$ swaps with $R$ in the transition
$P_{l-1}\gtrdot P_{l}$. In this case, $h_{l-1}>h_{l}$.
\end{proof}
Figure \ref{fig:3-4} is an example of a maximal chain in terms of
Dyck paths $P_{j}$ of length $8$. In the context of Proposition
\ref{prop:DyckPathsDecreasingHeights}, the sequences $(h_{0},h_{1},h_{2},h_{3},h_{4})$,
for $k\in\{1,2,3,4\}$, are $(4,4,2,1,1)$, $(3,1,1,1,1)$, $(2,2,2,2,1)$
and $(1,1,1,1,1)$, respectively. 

\begin{figure}[h]
\begin{centering}
$P_{0}$\hspace{0.5in}$P_{1}$\hspace{0.5in}$P_{2}$\hspace{0.5in}$P_{3}$\hspace{0.5in}$P_{4}$
\par\end{centering}

\begin{centering}
\includegraphics[scale=0.5]{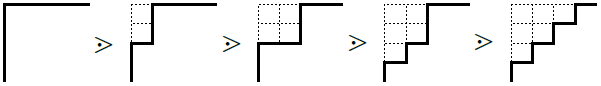}
\par\end{centering}

\caption{\label{fig:3-4}Example of Proposition \ref{prop:DyckPathsDecreasingHeights}}
\end{figure}

\begin{rem}
Knuth shows there are the Catalan number $C_{n}$ of \textit{forests}
on $n$ nodes, by creating a bijection between the sets of forests
and \textit{scope sequences} of length $n$ \cite{Knu06}. Scope sequences
also appear in \cite[Definition 9.1]{BjornerWachs97}. Proposition
\ref{prop:DyckPathsDecreasingHeights} follows from both references
directly from the covering relation in terms of scope sequences: Let
$P$ be a Dyck path of length $2n$. For each $1\leq k\leq n$, let
$H_{k}$ be the height of the prime Dyck subpath of $P$ beginning
with its $k$-th north step. Then the scope sequence of the forest
associated to $P$ is $(H_{1}-1,H_{2}-1,\ldots,H_{n}-1)$.
\end{rem}
Often times we will need to determine if a given tableau is a $\psi$-tableau.
The following definition and lemma will be utilized in this regard.
\begin{defn}
\label{def:EnclosureOfB-strip}We say that a set of boxes $S$ \textit{\textcolor{green}{translates}}
to a set of boxes $S'$ if each box in $S$ differs from one in $S'$
by the same horizontal and vertical amounts, and both sets are equal
in size, \textit{i.e.}, if for some constants $p$ and $q$, $S'=\{(x+p,y+q)\mid(x,y)\in S\}$.
Similarly, a subset of elements of a tableau may translate, or a path
of north and east steps may translate. 

Suppose $B=(x_{B},y_{B})$ is the last box of its row in a Young diagram
$Y$. Let $h$ be the height of the prime path of $B$. The \textit{\textcolor{green}{enclosure}}
of the $B$-strip in $Y$ is the set of boxes 
\[
\{(x,y)\in Y\mid x_{B}-h\leq x\leq x_{B}\text{ and }y_{B}\leq y\leq y_{B}+h\},
\]
where we consider row $0$ to be an infinite row of boxes. 
\end{defn}
In Figure \ref{fig:3-5}, the borders of $B$-strips and $B'$-strips
are bolded and their enclosures are grayed. $Y_{2}$ has shape $(3,2,1,1)$,
but the enclosure of its $B$-strip has shape $(5,3,2,1,1)$; it contains
$5$ boxes from row $0$.

\begin{figure}[h]
\begin{centering}
$Y_{1}$\hspace{0.75in}$Y_{1}'$\hspace{0.75in}$Y_{2}$\hspace{0.75in}$Y_{2}'$
\par\end{centering}

\begin{centering}
\includegraphics[scale=0.5]{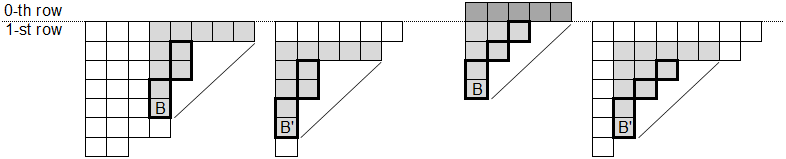}
\par\end{centering}

\caption{\label{fig:3-5}Examples of the Translation Lemma}
\end{figure}

In the context of Definition \ref{def:EnclosureOfB-strip}, notice
that the $B$-strip is a subset of its enclosure, and that the $B$-strip
begins in a row of index one more than where its enclosure begins.
The shape of the enclosure is the shape of a Young diagram. Furthermore,
the $B$-strip determines its enclosure and vice versa. 
\begin{lem}
\label{lem:TranslationLemma}Translation Lemma. Suppose $B$ and $B'$
are the last boxes of their rows in Young diagrams $Y$ and $Y'$,
respectively. Then the following conditions are equivalent.
\begin{itemize}
\item The enclosures of the $B$-strip in $Y$ and of the $B'$-strip in
$Y'$ have the same shape.
\item The enclosure of the $B$-strip in $Y$ translates to the enclosure
of the $B'$-strip in $Y'$.
\item The $B$-strip in $Y$ translates to the $B'$-strip in $Y'$.
\item The prime path of $B$ in $Y$ translates to the prime path of $B'$
in $Y'$. (The two prime paths are the same sequences of north and
east steps.)
\end{itemize}
If any of the conditions are satisfied, then the translations of the
various entities in the last three conditions are by the same horizontal
and vertical amounts as $B$ translates to $B'$.
\end{lem}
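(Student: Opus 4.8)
The statement is a cycle of four equivalences among conditions on the $B$-strip, its enclosure, and the prime path of $B$ in $Y$ (versus the primed versions in $Y'$), together with a uniformity-of-translation claim. The natural strategy is to establish these as a chain of implications running in the order

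\[
\text{(prime paths translate)} \Rightarrow \text{(}B\text{-strips translate)} \Rightarrow \text{(enclosures translate)} \Rightarrow \text{(enclosures have same shape)} \Rightarrow \text{(prime paths translate)},
\]

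closing the loop so that all four conditions are mutually equivalent. The key preliminary observation, which I would record first, is that by Definition \ref{def:PrimePathOfBoxOrRowCB} and Proposition \ref{prop:BasicProps}(\ref{enu:BasicProps1}), the prime path of $B$, the $B$-strip, and the enclosure of the $B$-strip are \emph{three encodings of the same combinatorial data}: the prime path is a sequence of north/east steps of some height $h$, the $B$-strip is exactly the set of last boxes of rows $x_B-h+1,\ldots,x_B$ whose right edges lie on that path, and the enclosure is the ambient rectangle-like region determined by $x_B$, $y_B$, and $h$. Since each entity determines and is determined by the others (as remarked just before the lemma), the content is really that a rigid translation of one is equivalent to a rigid translation of the others, all by the same vector.

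**Carrying out the steps.**

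First I would fix notation: let $h$ be the height of the prime path of $B=(x_B,y_B)$ and $h'$ the height of the prime path of $B'=(x_{B'},y_{B'})$. For the implication that the prime paths translating forces the $B$-strips to translate, I would argue that the prime path is literally the right-hand boundary of the $B$-strip together with the bottom edge, so a translation of the path by a vector $(p,q)$ carries the boxes immediately to its left (the strip) to the corresponding translated boxes; this uses Proposition \ref{prop:BasicProps}(\ref{enu:BasicProps1}) to identify the strip row-by-row from the path. Conversely, the $B$-strip determines its prime path as its right boundary, giving the reverse direction cheaply. For the passage between $B$-strip and enclosure, I would invoke the explicit formula in Definition \ref{def:EnclosureOfB-strip}: the enclosure is $\{(x,y)\in Y\mid x_B-h\le x\le x_B,\ y_B\le y\le y_B+h\}$, so it is a deterministic function of the triple $(x_B,y_B,h)$, and a translation of the strip by $(p,q)$ (which necessarily preserves $h=h'$ and sends $(x_B,y_B)\mapsto(x_{B'},y_{B'})=(x_B+p,y_B+q)$) translates the enclosure by the same vector, and vice versa. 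The implication from translation of enclosures to equality of enclosure shapes is immediate since translation preserves shape; the one genuinely substantive arrow is the converse, from \emph{equal shape} back to translation.

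**The main obstacle.**

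The delicate step is showing that equality of the enclosure shapes already forces the prime paths (hence the strips) to be honest translates. The subtlety is that \emph{a priori} two strips could sit inside congruent enclosures yet fail to be positioned identically within them. I would resolve this by observing that within its enclosure the $B$-strip occupies a canonical position: it begins in the row of index one more than where the enclosure begins (the remark preceding the lemma) and follows the enclosure's right boundary, so the strip is recoverable from the enclosure shape alone. Thus equal enclosure shapes yield congruent strips in congruent relative position, and the prime path — being the right boundary of the strip — is then the same north/east step sequence in both, which is exactly the translation condition; here the convention that row $0$ is an infinite row is what makes the top of the enclosure well-defined even when the strip reaches the first row, so I would be careful to handle that boundary case. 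Finally, the uniformity clause follows by bookkeeping: each equivalence was established by a translation sending $(x_B,y_B)\mapsto(x_{B'},y_{B'})$, so the common vector is $(x_{B'}-x_B,\ y_{B'}-y_B)$, i.e.\ the vector by which $B$ translates to $B'$, and I would simply note that the same vector propagates through all three of the last conditions.
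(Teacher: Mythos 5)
The paper never actually proves this lemma: it is stated right after the remark that ``the $B$-strip determines its enclosure and vice versa'' and is followed only by the examples of Figure \ref{fig:3-5}, so your proposal has to stand on its own. Its architecture is reasonable --- the cycle of implications, the equivalence of the prime-path and strip conditions, and the trivial arrow from translating enclosures to equal shapes are all fine. The problem is the step connecting strips to enclosures.

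You claim that, by Definition \ref{def:EnclosureOfB-strip}, the enclosure ``is a deterministic function of the triple $(x_B,y_B,h)$.'' That is false: the definition intersects the box range with $Y$, so the enclosure also depends on how far the nearby rows of $Y$ extend. Concretely, take $B=(3,1)$ in $Y=(1,1,1)$ and $B'=(3,1)$ in $Y'=(2,1,1)$ (both in $\tam_4$, say): both prime paths have height $h=3$, so the triples $(x_B,y_B,h)$ coincide, yet the enclosures have shapes $(4,1,1,1)$ and $(4,2,1,1)$ respectively. Hence ``strips translate $\Rightarrow$ enclosures translate'' cannot be had for the reason you give. What actually makes it true is a geometric fact your proof never establishes: the first row of the enclosure, row $x_B-h$, is always \emph{full}, i.e.\ $Y$ contains every box $(x_B-h,y)$ with $y_B\le y\le y_B+h$ (row $0$ being infinite by convention). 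This is exactly where primality enters: since the prime path of $B$ has height exactly $h$, it returns to its diagonal along the bottom edge of row $x_B-h$, and those trailing east steps are steps of the Dyck path of $Y$, which forces row $x_B-h$ to reach at least column $y_B+h$. A companion consequence of primality (interior corners of the path lie strictly above its diagonal) is that the strip box in row $x_B-k$ lies in a column at most $y_B+k-1$ for $1\le k\le h-1$, which is what guarantees that the lower rows of the enclosure end exactly at the strip boxes; your step ``equal enclosure shapes $\Rightarrow$ strips translate'' silently uses this as well, beyond the paper's own unproved remark that you cite. With these two facts supplied, your chain of implications closes; without them, the key arrow rests on a false premise.
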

In Figure \ref{fig:3-5}, the $B$-strip in $Y_{1}$ translates $3$
units to the left and $1$ unit down to the $B'$-strip in $Y_{1}'$.
The $B$-strip in $Y_{2}$ translates $1$ unit to the right and $2$
units down to the $B'$-strip in $Y_{2}'$. In both cases, the four
conditions in the lemma may be verified. 

In what follows, we define $\alpha_{d}$ as a map on tableaux. Theorem
\ref{thrm:Alpha}(\ref{enu:Alpha5}) is a result specific to $\psi$-tableaux.
$\alpha_{d}$ is part of the map $\phi_{i,n}^{r}$ defined in Section
\ref{sec:phi}. 
\begin{thm}
\label{thrm:Alpha}Let $\mathbb{X}$ be the set of all tableaux and
$d\geq1$. Let $\mathbb{Y}_{d}$ be the set of all tableaux with row
$d$ identical to row $d+1$. Define \textcolor{green}{$\alpha_{d}(T)$}
to be the tableau obtained from $T\in\mathbb{X}$ by shifting any
rows of index greater than $d$ down one row and repeating row $d$
in row $d+1$. Then the map $T\mapsto\alpha_{d}(T)$ is a bijection
from $\mathbb{X}$ to $\mathbb{Y}_{d}$ (is clear). (Obtain $\alpha_{d}^{-1}(\widetilde{T})$
from $\widetilde{T}\in\mathbb{Y}_{d}$ by deleting row $d+1$ and
shifting any rows of index greater than $d+1$ up one row.) Let $T\in\mathbb{X}$
and $\widetilde{T}=\alpha_{d}(T)$ (equivalently, let $\widetilde{T}\in\mathbb{Y}_{d}$
and $T=\alpha_{d}^{-1}(\widetilde{T})$). Let $l=l(T)=l(\widetilde{T})$,
and $b$ the length of row $d$ in both $T$ and $\widetilde{T}$.
Then:
\begin{enumerate}
\item \label{enu:Alpha2}The first $d$ rows in $T$ are identical to those
rows in $\widetilde{T}$. In particular, $b=0$ if and only if $T=\widetilde{T}$.
\item \label{enu:Alpha3}The rows of index at least $d$ in $T$ are identical
to the rows of index at least $d+1$ in $\widetilde{T}$. Each of
the first $b$ columns of $\widetilde{T}$ has column length one more
than its respective column in $T$.
\item \label{enu:Alpha4}Suppose $b>0$. Let $B$ and $\widetilde{B}$ be
the end-boxes of the $l$-sets in $T$ and in $\widetilde{T}$, respectively,
and $x_{B}$ the row index of $B$. Then:

\begin{enumerate}
\item \label{enu:Alpha4a}$x_{B}<d$ if and only if $B=\widetilde{B}$ if
and only if the $l$-sets in $T$ and in $\widetilde{T}$ are equal. 
\item \label{enu:Alpha4b}$x_{B}\geq d$ if and only if $B$ translates
down one unit to $\widetilde{B}$.
\item \label{enu:Alpha4c}The $l$-set in $T$ begins in a row of index
greater than $d$ if and only if the $l$-set in $\widetilde{T}$
begins in a row of index greater than $d+1$ if and only if the $l$-set
in $T$ translates down one unit to the $l$-set in $\widetilde{T}$.
\item \label{enu:Alpha4d}Suppose $x_{B}\geq d$. Then the $l$-set in $T$
is the set of last boxes of all rows $j$ for $1\leq j\leq x_{B}$
if and only if the $l$-set in $\widetilde{T}$ is the set of last
boxes of all rows $j$ for $1\leq j\leq x_{B}+1$.
\end{enumerate}
\item \label{enu:Alpha5}Suppose the height of the prime path of row $d$
in $T$ is $d$ (equivalently, the height of the prime path of row
$d$ in $\widetilde{T}$ is $d$). Then $T$ is a $\psi$-tableau
if and only if $\widetilde{T}$ is a $\psi$-tableau.
\end{enumerate}
\end{thm}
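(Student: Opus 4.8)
The plan is to treat parts (\ref{enu:Alpha2})--(\ref{enu:Alpha4}) as bookkeeping that unwinds the definition of $\alpha_d$ as the operation ``insert a duplicate of row $d$ directly below row $d$,'' and to reserve the real work for part (\ref{enu:Alpha5}). For (\ref{enu:Alpha2}) and (\ref{enu:Alpha3}) I would simply track how row and column lengths change: rows above $d$ are untouched, row $d$ is copied, everything below slides down one, so each of the first $b$ columns gains exactly one box. For (\ref{enu:Alpha4}) I would split on $x_B$ versus $d$. When $x_B<d$ the end-box $B$ of the $l$-set and its entire prime path lie in rows unaffected by the insertion, giving $B=\widetilde B$ and equal $l$-sets (\ref{enu:Alpha4a}); when $x_B\ge d$ the box $B$ sits on or below the inserted row and translates down one unit (\ref{enu:Alpha4b}); parts (\ref{enu:Alpha4c}) and (\ref{enu:Alpha4d}) are the corresponding statements for the whole $l$-set, and here I would invoke the Translation Lemma (Lemma \ref{lem:TranslationLemma}) to promote ``the $B$-strip translates down one unit'' to ``the prime paths agree,'' which is what certifies that the translated strip in $\widetilde T$ is again a genuine $B$-strip.

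For (\ref{enu:Alpha5}), the first step is the commutation identity $\widetilde T^{(r)}=\alpha_d\!\left(T^{(r)}\right)$ for every $r$, which holds because truncating to labels $\le r$ and duplicating row $d$ can be done in either order (the labels $\le r$ occupying row $d$ are the same before and after duplication). Together with $l(T)=l(\widetilde T)$ and the trivial case $b=0\Rightarrow T=\widetilde T$ from (\ref{enu:Alpha2}), this reduces (\ref{enu:Alpha5}), via the shape-chain form of the characterization (Proposition \ref{prop:PsiTabChar}(\ref{enu:PsiTabChar1}), equivalently Proposition \ref{prop:BasicProps}(\ref{enu:BasicProps6})), to a single per-level statement: for each $k$, the cover $sh(T^{(k-1)})\gtrdot sh(T^{(k)})$ holds if and only if $sh(\alpha_d(T^{(k-1)}))\gtrdot sh(\alpha_d(T^{(k)}))$ holds. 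Writing $\mu=sh(T^{(k)})$ and $\nu=sh(T^{(k-1)})=\mu\setminus(k\text{-set})$, I want that duplicating row $d$ in both $\mu$ and $\nu$ preserves the covering relation, where the removed strip $S$ (the $k$-set) is the $B_k$-strip.

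The crux, and the reason the hypothesis on the prime path of row $d$ is present, is to control the one bad configuration, where $S$ crosses row $d$. My key observation is that the height hypothesis propagates to every level for free: applying Proposition \ref{prop:DyckPathsDecreasingHeights} to the saturated chain $\emptyset=sh(T^{(0)})\gtrdot\cdots\gtrdot sh(T^{(l)})$, the height of the prime path of row $d$ weakly decreases as we descend, so from $h_l=d$ (the hypothesis) and the bound $h_k\le d$ coming from Proposition \ref{prop:BasicProps}(\ref{enu:BasicProps1}) (the prime path of row $d$ can reach at most row $1$) I get $h_k=d$ at every level $k$. The per-level statement then splits into three cases according to where $S$ sits: entirely above row $d$ (handled by (\ref{enu:Alpha4a}), the strip is unchanged); entirely below row $d$ (handled by (\ref{enu:Alpha4c}) and the Translation Lemma, the strip merely translates down); and crossing row $d$. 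In the crossing case $S$ contains the last box $B'$ of row $d$, so by Proposition \ref{prop:BasicProps}(\ref{enu:BasicProps4b}) the prime path of $B'$, which has height $d$ and hence reaches row $1$, is a subpath of the prime path of $B_k$; a prime path containing a subpath that already attains the top row must itself begin at row $1$, forcing $S$ to be exactly the set of last boxes of rows $1$ through $x_{B_k}$.

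This is precisely the situation of (\ref{enu:Alpha4d}), which then says $\alpha_d$ sends it to the set of last boxes of rows $1$ through $x_{B_k}+1$ in $\widetilde T^{(k)}$, again a legitimate $\widetilde B_k$-strip, so the cover is preserved. Running the same case analysis in both directions (using the equivalent form of the hypothesis for $\widetilde T$ to secure $h_k=d$ when we instead start from $\widetilde T$ being a $\psi$-tableau, since the prime path of row $d$ is literally the same sequence of steps in $T^{(k)}$ and in $\alpha_d(T^{(k)})$) yields the biconditional. I expect the crossing case, together with a careful justification of the primeness step ``a prime path that contains a top-reaching subpath must itself reach the top,'' to be the main obstacle; the squeeze furnished by Proposition \ref{prop:DyckPathsDecreasingHeights} is what makes the argument work at every level simultaneously rather than only for the top label.
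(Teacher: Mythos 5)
Your overall route is the paper's own: parts (\ref{enu:Alpha2})--(\ref{enu:Alpha4}) are read off from the definition of $\alpha_d$, and part (\ref{enu:Alpha5}) is reduced, via Proposition \ref{prop:PsiTabChar} and the commutation $\widetilde{T}^{(r)}=\alpha_d(T^{(r)})$, to a per-level transfer statement --- this is the paper's condition (\ref{eq:ConditionA}), which the paper proves for the top label and propagates by induction on $l$, while you handle all levels at once; that difference is cosmetic, since your squeeze ($h\le d$ always, $h$ weakly decreasing down the chain by Proposition \ref{prop:DyckPathsDecreasingHeights}, hence $h=d$ at every level) is exactly the paper's mechanism, as is the three-way case split on the position of the strip relative to row $d$.

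The gap is in your crossing case, and it sits precisely where the paper does its real work. Having shown that the $B_k$-strip in $T^{(k)}$ is the set of last boxes of rows $1,\ldots,x_{B_k}$, you apply part (\ref{enu:Alpha4d}) and then assert that the resulting set of last boxes of rows $1,\ldots,x_{B_k}+1$ in $\widetilde{T}^{(k)}$ is ``again a legitimate $\widetilde{B}_k$-strip.'' Nothing on your table justifies this: part (\ref{enu:Alpha4d}) concerns label data (which boxes carry the top label), whereas strip-ness is shape data --- you must show the prime path of $\widetilde{B}_k$ in $\widetilde{T}^{(k)}$ reaches row $1$. Your only tool for that, Proposition \ref{prop:BasicProps}(\ref{enu:BasicProps4b}) together with your ``primeness step,'' fires only if you already know that the last box of row $d+1$ (equivalently of row $d$) of $\widetilde{T}^{(k)}$ lies in the $\widetilde{B}_k$-strip; but that membership is exactly the crossing condition on the $\widetilde{T}$ side, i.e.\ part of what is being proved, so as written the step is circular. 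The paper breaks this circle with its contour paths $P$ and $\widetilde{P}$: since rows of index at least $d$ in $T$ coincide with rows of index at least $d+1$ in $\widetilde{T}$ (part (\ref{enu:Alpha3})), the contour from $B$ up to the last box of row $d$ translates down one unit to the contour from $\widetilde{B}$ up to the last box of row $d+1$, so the prime path of $B$ continues past the former if and only if the prime path of $\widetilde{B}$ continues past the latter; only after this equivalence is in hand does the containment chain ``prime path of $(d,b)$ $\subseteq$ prime path of $(d+1,b)$ $\subseteq$ prime path of $\widetilde{B}$'' (the first containment because rows $d$ and $d+1$ of $\widetilde{T}$ are identical) force the $\widetilde{B}$-strip to be the last boxes of rows $1,\ldots,x_B+1$. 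The same transfer is needed, and equally missing, when you ``run the same case analysis in both directions'': going from $\widetilde{T}$ back to $T$, you must know the prime path of $B_k$ continues past the last box of row $d$ before your primeness step can be invoked. Everything else in your outline --- the commutation identity, the squeeze, and cases (\ref{enu:Alpha4a}) and (\ref{enu:Alpha4c}) handled with the Translation Lemma (Lemma \ref{lem:TranslationLemma}) --- is correct and matches the paper.
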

$T_{1}$ and $T_{2}$ are both $\psi$-tableaux in Figure \ref{fig:3-6}.
For $d=5$, $T_{1}$ satisfies the assumptions of Theorem \ref{thrm:Alpha}(\ref{enu:Alpha5});
so $\alpha_{5}(T_{1})$ is a $\psi$-tableau (the prime paths of row
$5$ are bolded). For $d=3$, $T_{2}$ does not satisfy the assumptions
of Theorem \ref{thrm:Alpha}(\ref{enu:Alpha5}); the prime path of
row $3$ (in bold) has height $1$. In this case, $\alpha_{3}(T_{2})$
is not a $\psi$-tableau; the $(4,2)$-strip (grayed) does not equal
the $4$-set.

\begin{figure}[h]
\begin{centering}
$T_{1}$\hspace{0.85in}$\alpha_{5}(T_{1})$\hspace{0.7in}$T_{2}$\hspace{0.65in}$\alpha_{3}(T_{2})$\hspace{0.2in}
\par\end{centering}

\begin{centering}
\includegraphics[scale=0.5]{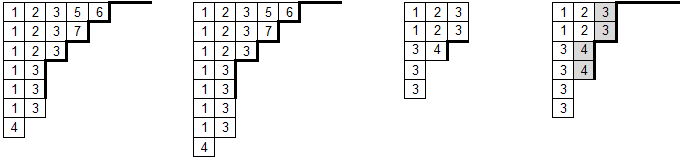}
\par\end{centering}

\caption{\label{fig:3-6}Examples of $\alpha_{d}$}
\end{figure}

\begin{proof}[Proof of Theorem \ref{thrm:Alpha}]
(\ref{enu:Alpha2})-(\ref{enu:Alpha4}) follow easily from the definition
of $\alpha_{d}$.

The proof of (\ref{enu:Alpha5}) is by induction on $l$. The base
case $l=0$ is handled in (\ref{enu:Alpha2}), so we may assume $b>0$
(and $l>0$). Assume the notation in (\ref{enu:Alpha4}). We claim
that 
\begin{equation}
\text{the }l\text{-set is the }B\text{-strip in }T\text{ if and only if the }l\text{-set is the }\widetilde{B}\text{-strip in }\widetilde{T}\text{.}\label{eq:ConditionA}
\end{equation}

This fact implies (\ref{enu:Alpha5}) as follows. Suppose $T$ is
a $\psi$-tableau for which the height of the prime path of row $d$
is $d$. Then $T^{(l-1)}$ is a $\psi$-tableau for which the height
of the prime path of row $d$ is $d$ (by Proposition \ref{prop:DyckPathsDecreasingHeights}).
By induction, it follows that $\widetilde{T}^{(l-1)}$ is a $\psi$-tableau,
and (\ref{eq:ConditionA}) implies that $\widetilde{T}$ is a $\psi$-tableau
(by Proposition \ref{prop:PsiTabChar}(\ref{enu:PsiTabChar3})). The
other direction is similar. 

To prove (\ref{eq:ConditionA}), first suppose $x_{B}<d$. Then the
conditions in (\ref{enu:Alpha4a}) hold. It follows from (\ref{enu:Alpha2}),
that the $B$-strip in $T$ equals the $\widetilde{B}$-strip in $\widetilde{T}$.
(\ref{eq:ConditionA}) follows.

Now suppose $x_{B}\geq d$. Let $P$ be the path along the contour
of $T$ between the bottom right corners of $B$ and $(d,b)$ (if
$B=(d,b)$, then $P$ is a point; otherwise, assume $P$ is made of
north and east steps from $B$ to $(d,b)$). Let $\widetilde{P}$
be the like path associated to $\widetilde{T}$ between $\widetilde{B}$
and $(d+1,b)$. It follows from (\ref{enu:Alpha3}) and (\ref{enu:Alpha4b})
that $\widetilde{B}$ has row index $x_{B}+1$ (and the same column
index as $B$) and that $P$ translates down one unit to $\widetilde{P}$.
There are two subcases.

In the first subcase, the prime path of $B$ in $T$ is a subpath
of $P$ (if and only if the prime path of $\widetilde{B}$ in $\widetilde{T}$
is a subpath of $\widetilde{P}$). Then the prime path of $B$ in
$T$ translates down one unit to the prime path of $\widetilde{B}$
in $\widetilde{T}$. (\ref{eq:ConditionA}) follows from the translation
lemma and (\ref{enu:Alpha4c}).

In the second subcase, the prime path of $(d,b)$ in $T$ is a subpath
of the prime path of $B$ (if and only if the prime path of $(d+1,b)$
in $\widetilde{T}$ is a subpath of the prime path of $\widetilde{B}$).
By assumption, the prime path of $(d,b)$ in $T$ has height $d$.
It follows that the prime path of $B$ in $T$ has height $x_{B}$.
Thus the $B$-strip in $T$ is the set of last boxes of all rows $j$
for $1\leq j\leq x_{B}$. The prime path of $(d,b)$ in $\widetilde{T}$
has height $d$, and is a subpath of the prime path of $(d+1,b$),
which in turn (we said) is a subpath of the prime path of $\widetilde{B}$.
It follows that the prime path of $\widetilde{B}$ in $\widetilde{T}$
has height $x_{B}+1$ ($\widetilde{B}$ has row index $x_{B}+1$).
Thus the $\widetilde{B}$-strip in $\widetilde{T}$ is the set of
last boxes of all rows $j$ for $1\leq j\leq x_{B}+1$. (\ref{eq:ConditionA})
then follows by (\ref{enu:Alpha4d}).
\end{proof}
Going forward, we identify chains by corresponding $\psi$-tableaux.

\section{\label{sec:phi}Plus-full-sets and a map on maximal chains}

The main focus of this section is to relate the map $\phi_{i,n}^{r}$,
and show that it is bijective; see Theorem \ref{thrm:PhiINR}. A maximal
chain in the image of $\psi$ may or may not possess a plus-full-set.
$\phi_{i,n}^{r}$ takes a maximal chain in $\chain_{i}(n)$ to one
in $\chain_{i}(n+1)$, where $r$ determines the domain and codomain
in terms of plus-full-sets. We first make basic definitions and then
build on $\alpha_{d}$ in Theorems \ref{thm:Beta} and \ref{thrm:PrePhi}.
We then specialize the map defined in Theorem \ref{thrm:PrePhi} by
modifying its domain and codomain to define $\phi_{i,n}^{r}$ in Theorem
\ref{thrm:PhiINR}. 

Recall that a maximal chain $C\in\chain_{i}(n)$ satisfies $sh(C)=\delta_{n-1}$
and $l(C)=n+i$. The outer diagonal of $C$ is the set of boxes $\{(k,n-k)\mid k\in[n-1]\}$.
The label in the box $(x,y)$ is denoted $C(x,y)$.
\begin{defn}
\label{def:FS}Let $C$ be a $\psi$-tableau of shape $\delta_{n-1}$,
for some $n\geq1$. For $r\in[l(C)]$, if the $r$-set begins in its
first row and ends in its outer diagonal, then we call the $r$-set
an \textit{\textcolor{green}{$r$-full-set}}, or more generally a
\textit{\textcolor{green}{full-set}}. In this case, there is a unique
box in the outer diagonal labeled with $r$ (by property \ref{prop:BasicProps}(\ref{enu:BasicProps5})),
\textit{i.e.}, there is a unique $k\in[n-1]$ satisfying $C(k,n-k)=r$.
The $r$-set is an \textit{\textcolor{green}{$r^{+}$-full-set}},
or more generally a \textit{\textcolor{green}{plus-full-set}}, if:
\begin{itemize}
\item The $r$-set is a full-set, and
\item its end-box $(k,n-k)$ satisfies $k=n-1$, or $k\in[n-2]$ and $C(k+1,n-k-1)<C(k,n-k)$
(the southwest neighbor of $(k,n-k)$ has a label less than $r$).
\end{itemize}
For each $r\in[n+i]$, \textit{\textcolor{green}{$\full_{i}^{r}(n)$}}
is the set of all $C\in\chain_{i}(n)$ satisfying:
\begin{itemize}
\item The $r$-set is a plus-full-set, and
\item for each $j\in[r-1]$, the $j$-set is not a plus-full-set.
\end{itemize}
\end{defn}
\begin{rem}
\label{rem:PFSSubsetsDisjoint}The $\full_{i}^{r}(n)$ are disjoint
subsets of $\chain_{i}(n)$. We denote a disjoint union of sets with
$\biguplus$.
\end{rem}
\begin{figure}[h]
\raggedleft{}%
\begin{minipage}[c]{0.3\columnwidth}%
\begin{center}
\includegraphics[scale=0.5]{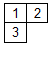}
\par\end{center}

\begin{center}
\caption{\label{fig:4-1}$\chain_{0}(3)$}

\par\end{center}%
\end{minipage}\hfill{}%
\begin{minipage}[c]{0.6\columnwidth}%
\begin{center}
\includegraphics[scale=0.5]{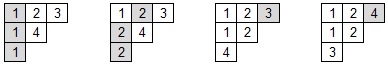}
\par\end{center}

\begin{center}
\caption{\label{fig:4-2}$\chain_{0}(4)=\biguplus_{r\in[4]}\full_{0}^{r}(4)$}

\par\end{center}%
\end{minipage}
\end{figure}

$\chain_{0}(3)$ consists of a single maximal chain, call it $C$,
shown in Figure \ref{fig:4-1}. Neither its $1$-set nor $3$-set
is a full-set, so neither is a plus-full-set. Its $2$-set is a full-set,
but is not a plus-full-set ($3=C(2,1)>C(1,2)=2$). Thus each of the
subsets $\full_{0}^{1}(3)$, $\full_{0}^{2}(3)$ and $\full_{0}^{3}(3)$
of $\chain_{0}(3)$ is the empty set.

Each of the subsets $\full_{0}^{1}(4)$, $\full_{0}^{2}(4)$, $\full_{0}^{3}(4)$
and $\full_{0}^{4}(4)$ of $\chain_{0}(4)$ consists of exactly one
maximal chain, listed in Figure \ref{fig:4-2}. The plus-full-set
that qualifies each maximal chain is grayed. $\chain_{0}(4)$ is the
disjoint union of these subsets. It is shown in Theorem \ref{thrm:GOrE2IPlus4}
that for all $n\geq2i+4$, $\chain_{i}(n)$ is the disjoint union
of nonempty subsets $\full_{i}^{r}(n)$ for which $r\in[3i+4]$.
\begin{thm}
\label{thm:Beta}Let $n\geq d\geq1$ and $r\geq0$. Define \textcolor{green}{$\beta_{d}(Y)$}
to be the tableau obtained from a tableau $Y$ of length $r$ by appending
a box labeled with $r+1$ to the end of all rows $j$ for which $1\leq j\leq d$.

Define \textcolor{green}{$\mathbb{X}_{d,n-1}^{r}$} to be the set
of all $\psi$-tableaux $X$ of length $r$ satisfying $(x1)$ $X$
is contained in $\delta_{n-1}$, $(x2)$ the length of row $d$ is
$n-d$, and $(x3)$ the prime path of row $d$ has height $d$.

Define \textcolor{green}{$\mathbb{Z}_{d,n}^{r+1}$} to be the set
of all $\psi$-tableaux $Z$ of length $r+1$ satisfying $(z1)$ $Z$
is contained in $\delta_{n}$, $(z2)$ rows $d$ and $d+1$ of $Z^{(r)}$
are identical, $(z3)$ the end-box of the $(r+1)$-set is $(d,n-d+1)$,
and $(z4)$ the prime path of $(d,n-d+1)$ has height $d$. 
\begin{enumerate}
\item \label{enu:ThmBetaLengths}Suppose $X$ is a tableau of length $r$
which satisfies $(x2)$, and let $Z=\beta_{d}(\alpha_{d}(X))$. Then
each of the first $d$ rows in $Z$ has row length one more than its
respective row in $X$, and each of the first $n-d$ columns in $Z$
has column length one more than its respective column in $X$. 
\item \label{enu:ThmBetaBiject}The map $X\mapsto\beta_{d}(\alpha_{d}(X))$
is a bijection from $\mathbb{X}_{d,n-1}^{r}$ to $\mathbb{Z}_{d,n}^{r+1}$
(see Figure \ref{fig:4-3}).
\end{enumerate}
\end{thm}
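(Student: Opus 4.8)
The plan is to prove the two statements separately: statement~(\ref{enu:ThmBetaLengths}) is pure shape bookkeeping through $\alpha_d$ and $\beta_d$, while statement~(\ref{enu:ThmBetaBiject}) rests on a single geometric observation, namely that extending each of rows $1,\dots,d$ by one box at its end translates the prime path of row $d$ rigidly to the right and hence preserves its height. For part~(\ref{enu:ThmBetaLengths}) I would track rows and columns through the two maps. By Theorem~\ref{thrm:Alpha}(\ref{enu:Alpha2}) the first $d$ rows of $\alpha_d(X)$ equal those of $X$, and $\beta_d$ appends exactly one box to each of rows $1,\dots,d$, so each of the first $d$ rows of $Z$ is one longer than in $X$. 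For columns, $(x2)$ gives $b=n-d$, so Theorem~\ref{thrm:Alpha}(\ref{enu:Alpha3}) makes each of the first $n-d$ columns of $\alpha_d(X)$ one longer than in $X$; since the rows are weakly decreasing and row $d$ has length $n-d$, every box appended by $\beta_d$ lands in a column of index $\geq n-d+1$, leaving the first $n-d$ columns untouched. Thus those columns of $Z$ are one longer than in $X$. This uses only $(x2)$.

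For part~(\ref{enu:ThmBetaBiject}) I would first check that the forward map sends $\mathbb{X}_{d,n-1}^{r}$ into $\mathbb{Z}_{d,n}^{r+1}$. The length is $r+1$ because $\alpha_d$ preserves length and $\beta_d$ introduces the single new label $r+1$; the $(r+1)$-set is precisely the boxes appended by $\beta_d$, so $Z^{(r)}=\alpha_d(X)$, which gives $(z2)$ immediately from the defining property of $\alpha_d$. By part~(\ref{enu:ThmBetaLengths}), row $d$ of $Z$ has length $n-d+1$, so the end-box of the $(r+1)$-set is $(d,n-d+1)$, giving $(z3)$; and the row-length bounds from part~(\ref{enu:ThmBetaLengths}) together with $(x1)$ give $Z\subseteq\delta_n$, i.e.\ $(z1)$. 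What remains, $(z4)$ and the assertion that $Z$ is a $\psi$-tableau, is the substance of the argument.

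The hard part is $(z4)$, and the key lemma I would establish is that, under $(x2)$, the prime path of $(d,n-d)$ in $X$ translates one unit to the right onto the prime path of $(d,n-d+1)$ in $Z$, so the two have equal height. Both prime paths begin at the right edge of the last box of row $d$ and run upward, hence depend only on the right contour of rows $1,\dots,d$; by part~(\ref{enu:ThmBetaLengths}) that contour for $Z$ is the one for $X$ shifted right by one unit (the length differences among rows $1,\dots,d$ are unchanged), while the shape below row $d$ never enters the prime path. Lemma~\ref{lem:TranslationLemma} then transfers the height-$d$ hypothesis $(x3)$ to $(z4)$. Once $(z4)$ holds, Proposition~\ref{prop:BasicProps}(\ref{enu:BasicProps1}) identifies the $(d,n-d+1)$-strip with the set of last boxes of rows $1,\dots,d$, which is exactly the $(r+1)$-set; since $Z^{(r)}=\alpha_d(X)$ is a $\psi$-tableau by Theorem~\ref{thrm:Alpha}(\ref{enu:Alpha5}) (using $(x3)$), Proposition~\ref{prop:PsiTabChar}(\ref{enu:PsiTabChar3}) then yields that $Z$ is a $\psi$-tableau.

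Finally, for bijectivity I would exhibit the inverse $Z\mapsto\alpha_d^{-1}(Z^{(r)})$. It is well defined since $(z2)$ places $Z^{(r)}$ in the domain $\mathbb{Y}_d$ of $\alpha_d^{-1}$; running the same translation argument in reverse (now removing the last box of each of rows $1,\dots,d$, i.e.\ shifting the contour left by one) carries $(z4)$ back to the height-$d$ statement $(x3)$, so Theorem~\ref{thrm:Alpha}(\ref{enu:Alpha5}) makes $\alpha_d^{-1}(Z^{(r)})$ a $\psi$-tableau, and the length and shape bounds give $(x1)$ and $(x2)$. That the two maps are mutually inverse is then immediate: one way $Z^{(r)}=\alpha_d(X)$ forces $\alpha_d^{-1}(Z^{(r)})=X$, and the other way $(z3)$--$(z4)$ force the $(r+1)$-set of any $Z$ to be the last boxes of rows $1,\dots,d$, so $\beta_d$ re-appends exactly those boxes and recovers $Z$. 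The only genuine obstacle is the prime-path translation lemma; all the rest is bookkeeping governed by Theorem~\ref{thrm:Alpha} and the characterization of $\psi$-tableaux.
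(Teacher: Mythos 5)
Your proposal is correct and, for the most part, retraces the paper's own proof: part~(\ref{enu:ThmBetaLengths}) by bookkeeping through Theorem~\ref{thrm:Alpha}(\ref{enu:Alpha2})--(\ref{enu:Alpha3}); well-definedness of the forward map by obtaining $(z4)$ from the rigid one-unit rightward translation of the prime path of row $d$ (this is exactly the paper's step, which it phrases via Lemma~\ref{lem:TranslationLemma}), and the $\psi$-property of $Z$ from Proposition~\ref{prop:PsiTabChar}(\ref{enu:PsiTabChar3}) together with Theorem~\ref{thrm:Alpha}(\ref{enu:Alpha5}); and bijectivity by exhibiting $Z\mapsto\alpha_{d}^{-1}(Z^{(r)})$ as the inverse, well defined thanks to $(z2)$. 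The one place you genuinely diverge is the surjectivity step, where $(z4)$ must be carried back to $(x3)$: the paper cites Proposition~\ref{prop:DyckPathsDecreasingHeights} (prime-path heights weakly decrease down a saturated chain, so the height of row $d$'s prime path in $Z^{(r)}$ is at least its height $d$ in $Z=Z^{(r+1)}$, and it is automatically at most $d$), whereas you first pin down the $(r+1)$-set of $Z$ as the set of last boxes of rows $1,\dots,d$ --- using $(z3)$, $(z4)$, Proposition~\ref{prop:PsiTabChar}(\ref{enu:PsiTabChar1}) and Proposition~\ref{prop:BasicProps}(\ref{enu:BasicProps1}) --- and then rerun the translation argument in reverse. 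Both are sound: the paper's route is shorter but leans on chain monotonicity plus the unstated cap that row $d$'s prime path has height at most $d$, while yours is symmetric with the forward direction and self-contained, and it makes explicit why $\beta_{d}(Z^{(r)})=Z$, a fact the paper labels ``evident.'' One small caution: your derivation of $(z1)$ from part~(\ref{enu:ThmBetaLengths}) controls only the first $d$ rows and first $n-d$ columns of $Z$; the rows of index greater than $d$ need the explicit description of $\alpha_{d}$ (row $d+1$ of $Z$ has length $n-d$, and the rows below it are rows of $X$ shifted down), though this is the same level of brevity as the paper's own proof.
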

\begin{figure}[h]
\begin{centering}
{\footnotesize{}\hspace{0.28in}$X\in\mathbb{X}_{5,4}^{4}$\hspace{0.35in}$\alpha_{5}(X)$\hspace{0.26in}$Z=\beta_{5}(\alpha_{5}(X))\in\mathbb{Z}_{5,5}^{5}$\hspace{0.34in}$X\in\mathbb{X}_{3,5}^{5}$\hspace{0.45in}$\alpha_{3}(X)$\hspace{0.37in}$Z=\beta_{3}(\alpha_{3}(X))\in\mathbb{Z}_{3,6}^{6}$}
\par\end{centering}{\footnotesize \par}

\begin{centering}
\includegraphics[scale=0.5]{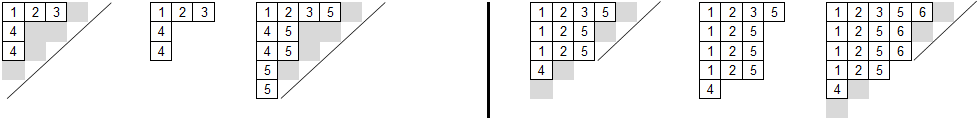}
\par\end{centering}

\caption{\label{fig:4-3}Examples of $Z=\beta_{d}(\alpha_{d}(X))$ for $X\in\mathbb{X}_{d,n-1}^{r}$:
$d=n$ in the first example, and $d\in[n-1]$ in the second. The grayed
boxes complete $\widetilde{X}$ (of shape $\delta_{n-1}$) and $\widetilde{Z}$
(of shape $\delta_{n}$) from $X$ and $Z$, respectively (see Corollary
\ref{cor:Beta}(\ref{enu:CorBetaUnlabeledTranslate})-(\ref{enu:CorBetaAllUnlabeled})).}
\end{figure}

\begin{proof}
(\ref{enu:ThmBetaLengths}) follows immediately by Theorem \ref{thrm:Alpha}(\ref{enu:Alpha2})-(\ref{enu:Alpha3})
and the definition of $\beta_{d}$.

(\ref{enu:ThmBetaBiject}). To prove that this map is well-defined,
suppose $X\in\mathbb{X}_{d,n-1}^{r}$. Let $Y=\alpha_{d}(X)$ and
$Z=\beta_{d}(Y)$. $Y$ and $Z$ have lengths $r$ and $r+1$, respectively.
Since $X$ satisfies $(x1)$, it follows from (\ref{enu:ThmBetaLengths})
that $Z$ satisfies $(z1)$. $Y$ satisfies $(x2)$ and its $d$-th
and $(d+1)$-th rows are identical; thus $Z$ satisfies $(z2)$ and
$(z3)$. $Y$ is a $\psi$-tableau by Theorem \ref{thrm:Alpha}(\ref{enu:Alpha5}).
The prime path of row $d$ in $Y$ has height $d$ and translates
one unit to the right to the prime path of $(d,n-d+1)$ in $Z$ (as
implied by the labeling of the $(r+1)$-set); thus $Z$ satisfies
$(z4)$, and the $(r+1)$-set is the $(d,n-d+1)$-strip in $Z$. $Z$
is a $\psi$-tableau by Proposition \ref{prop:PsiTabChar}(\ref{enu:PsiTabChar3}),
and thus $Z\in\mathbb{Z}_{d,n}^{r+1}$. From the fact that $X=\alpha_{d}^{-1}(Z^{(r)})$,
this map is injective.

To prove that this map is surjective, suppose $Z\in\mathbb{Z}_{d,n}^{r+1}$.
Because $Z$ satisfies $(z2)$, we can let $X=\alpha_{d}^{-1}(Z^{(r)})$.
Evidently $\beta_{d}(\alpha_{d}(X))=\beta_{d}(Z^{(r)})=Z$: We only
must show that $X\in\mathbb{X}_{d,n-1}^{r}$. Since $Z$ satisfies
$(z3)$, we have that $Z^{(r)}$ satisfies $(x2)$, and thus also
$X$ satisfies $(x2)$. Since $Z$ satisfies $(z1)$, it follows from
(\ref{enu:ThmBetaLengths}) that $X$ satisfies $(x1)$. Since $Z$
satisfies $(z4)$, it follows from Proposition \ref{prop:DyckPathsDecreasingHeights}
that $Z^{(r)}$ satisfies $(x3)$. $X$ satisfies $(x3)$ and is a
$\psi$-tableau by Theorem \ref{thrm:Alpha}(\ref{enu:Alpha5}). Thus
$X\in\mathbb{X}_{d,n-1}^{r}$.\end{proof}
\begin{cor}
\label{cor:Beta}Let $n\geq d\geq1$ and $r\geq0$. Suppose $X\in\mathbb{X}_{d,n-1}^{r}$,
and let $Z=\beta_{d}(\alpha_{d}(X))$ (equivalently, suppose $Z\in\mathbb{Z}_{d,n}^{r+1}$,
and let $X=\alpha_{d}^{-1}(\beta_{d}^{-1}(Z))$). Then:
\begin{enumerate}
\item \label{enu:CorBetaODBoxes}If $d=n$, then row $d$ in $X$ is empty,
and $(d,n-d+1)=(n,1)\in Z$ is in the outer diagonal of $\delta_{n}$.
If $d\in[n-1]$, then $(d,n-d)\in X$ is in the outer diagonal of
$\delta_{n-1}$, and $(d+1,n-d),(d,n-d+1)\in Z$ are in the outer
diagonal of $\delta_{n}$.
\end{enumerate}
Overlay $X$ on top of the Young diagram of shape $\delta_{n-1}$
so that $X$ is positioned to the top left, and call this construction
$\widetilde{X}$. Overlay $Z$ on top of $\delta_{n}$ in the like
manner to obtain $\widetilde{Z}$ (see Figure \ref{fig:4-3}).
\begin{enumerate}[start=2]
\item \label{enu:CorBetaUnlabeledTranslate}Any unlabeled boxes in $\widetilde{X}$
of row index less than $d$ (equivalently, to the right of column
$n-d$) translate to the right one unit to (and have the same skew
shape as) any unlabeled boxes in $\widetilde{Z}$ of row index less
than $d$ (equivalently, to the right of column $n-d+1$). Any unlabeled
boxes in $\widetilde{X}$ to the left of column $n-d$ (equivalently,
of row index greater than $d$) translate down one unit to (and have
the same skew shape as) any unlabeled boxes in $\widetilde{Z}$ to
the left of column $n-d$ (equivalently, of row index greater than
$d+1$).
\item \label{enu:CorBetaAllUnlabeled}Any unlabeled boxes in $\widetilde{X}$
and $\widetilde{Z}$ are accounted for in (\ref{enu:CorBetaUnlabeledTranslate}).
\end{enumerate}
\end{cor}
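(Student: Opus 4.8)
The plan is to prove each of the three numbered claims by carefully tracking how the two operations $\alpha_d$ and $\beta_d$ reposition boxes, using the structural information already established in Theorem \ref{thrm:Alpha} and Theorem \ref{thm:Beta}(\ref{enu:ThmBetaLengths}). Throughout I would fix $X\in\mathbb{X}_{d,n-1}^{r}$ and $Z=\beta_{d}(\alpha_{d}(X))\in\mathbb{Z}_{d,n}^{r+1}$, and keep in mind the basic arithmetic of the staircase shapes: in $\delta_{n-1}$ the outer diagonal consists of the cells $(k,n-k)$, while in $\delta_{n}$ it consists of the cells $(k,n+1-k)$.

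For claim (\ref{enu:CorBetaODBoxes}), I would split into the two cases exactly as the statement does. When $d=n$, condition $(x2)$ forces row $d=n$ of $X$ to have length $n-d=0$, so row $d$ is empty; applying $\alpha_d$ repeats this empty row and then $\beta_d$ appends a box labeled $r+1$ to rows $1,\dots,d$, in particular producing the box $(n,1)$, which indeed satisfies $x+y=n+1$ and so lies on the outer diagonal of $\delta_n$. When $d\in[n-1]$, condition $(x2)$ says row $d$ of $X$ has length $n-d$, so its last box is $(d,n-d)$, which satisfies $x+y=n$ and lies on the outer diagonal of $\delta_{n-1}$. Theorem \ref{thm:Beta}(\ref{enu:ThmBetaLengths}) tells me row $d$ of $Z$ has length $n-d+1$, giving end-box $(d,n-d+1)$ on the outer diagonal of $\delta_n$; and since $\alpha_d$ copies row $d$ into row $d+1$ and $\beta_d$ does not touch row $d+1$ (as $d+1>d$), row $d+1$ of $Z$ also has length $n-d$, so its last box is $(d+1,n-d)$, again satisfying $x+y=n+1$. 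This establishes the outer-diagonal membership in every case.

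For claims (\ref{enu:CorBetaUnlabeledTranslate}) and (\ref{enu:CorBetaAllUnlabeled}), I would reason about the complementary (unlabeled) region of the overlays $\widetilde{X}\subseteq\delta_{n-1}$ and $\widetilde{Z}\subseteq\delta_n$. The key observation is that the composite $\beta_d\circ\alpha_d$ acts by a clean geometric dichotomy recorded in Theorem \ref{thm:Beta}(\ref{enu:ThmBetaLengths}): each of the first $d$ rows of $Z$ is one box longer than in $X$, and each of the first $n-d$ columns of $Z$ is one box longer than in $X$. I would argue that a box of $\delta_{n-1}$ lying in rows $1,\dots,d-1$ (equivalently to the right of column $n-d$) is unlabeled in $\widetilde{X}$ precisely when its rightward translate is unlabeled in $\widetilde{Z}$: in these rows the labeled region simply grew by one box at the right end, and the staircase boundary also shifted right by one as we pass from $\delta_{n-1}$ to $\delta_n$, so the skew shape of the unlabeled part is preserved under a one-unit rightward shift. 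Symmetrically, for boxes lying strictly below row $d$ (equivalently to the left of column $n-d$), the column-lengthening together with the downward staircase growth identifies the unlabeled part of $\widetilde{X}$ with that of $\widetilde{Z}$ under a one-unit downward shift. Claim (\ref{enu:CorBetaAllUnlabeled}) then follows by verifying that rows $1,\dots,d-1$ and rows $d+1,\dots$ exhaust all rows where an unlabeled box can occur, the point being that in row $d$ itself (the newly appended box and its copy in row $d+1$) there are no unlabeled cells left over, since $(x2)$ pins down exactly the lengths involved.

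The main obstacle I anticipate is the bookkeeping at the ``seam'' — rows $d$ and $d+1$ and the interaction with the outer-diagonal boxes identified in (\ref{enu:CorBetaODBoxes}) — where I must check that the two translation regimes (rightward for the upper rows, downward for the lower rows) meet consistently and leave no unlabeled box unaccounted for and no box double-counted. Concretely, the delicate verification is that the boundary of $\delta_{n-1}$ maps correctly onto the boundary of $\delta_n$ under these two partial translations, so that unlabeled cells stay within the ambient staircase on both sides; this is where I would lean most heavily on the precise column- and row-length statements of Theorem \ref{thm:Beta}(\ref{enu:ThmBetaLengths}) rather than on a direct cell-by-cell computation. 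Everything else is a routine consequence of the definitions of $\alpha_d$ and $\beta_d$ together with the already-proved structural properties.
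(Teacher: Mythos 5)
Your proposal is correct and follows essentially the same route as the paper, whose proof simply declares part (\ref{enu:CorBetaODBoxes}) clear and derives parts (\ref{enu:CorBetaUnlabeledTranslate})--(\ref{enu:CorBetaAllUnlabeled}) from part (\ref{enu:CorBetaODBoxes}) together with Theorem \ref{thm:Beta}(\ref{enu:ThmBetaLengths}). You have merely written out the row-by-row and column-by-column bookkeeping (including the observation from $(x2)$ that rows $d$ of $\widetilde{X}$ and rows $d$, $d+1$ of $\widetilde{Z}$ are full, so no unlabeled boxes occur at the seam) that the paper leaves implicit.
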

\begin{proof}
(\ref{enu:CorBetaODBoxes}) is clear. (\ref{enu:CorBetaUnlabeledTranslate})
and (\ref{enu:CorBetaAllUnlabeled}) follow from (\ref{enu:CorBetaODBoxes})
and Theorem \ref{thm:Beta}(\ref{enu:ThmBetaLengths}).\end{proof}
\begin{lem}
\label{lem:PsiTableauxEqualLengthRowsProp}Let $d\geq1$, and suppose
$T$ is a $\psi$-tableau such that rows $d$ and $d+1$ have equal
lengths. Then those rows are identical.\end{lem}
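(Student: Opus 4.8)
The plan is to argue by induction on the common length $b$ of rows $d$ and $d+1$. When $b=0$ both rows are empty and there is nothing to prove, so I would assume $b\geq 1$ and write the two rows as $a_1<\cdots<a_b$ and $c_1<\cdots<c_b$, where $a_y=T(d,y)$ and $c_y=T(d+1,y)$; since columns weakly increase, $a_y\leq c_y$ for every $y$. The heart of the matter will be to show that the two last boxes carry the same label, $a_b=c_b=:r$. Once this is in hand I would pass to the truncation $T^{(r-1)}$, which is again a $\psi$-tableau by Proposition \ref{prop:BasicProps}(\ref{enu:BasicProps6}). Because each row strictly increases, $r$ occurs at most once in each of rows $d$ and $d+1$, and $a_b=c_b=r$ is the largest label in each, so deleting the boxes labelled $r$ leaves rows $d$ and $d+1$ of $T^{(r-1)}$ with equal length $b-1$. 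The inductive hypothesis then makes those rows identical, and combining this with $a_b=c_b=r$ forces rows $d$ and $d+1$ of $T$ to coincide.

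To prove $a_b=c_b$ I would work inside $T^{(r)}$, also a $\psi$-tableau by Proposition \ref{prop:BasicProps}(\ref{enu:BasicProps6}). A direct count shows that in $T^{(r)}$ both rows still have length exactly $b$: every $c_y\leq c_b=r$ and every $a_y\leq c_y\leq r$, while neither row has more than $b$ boxes in $T$. Thus $(d+1,b)$ and $(d,b)$ are the last boxes of two equal-length rows of $T^{(r)}$, and $(d+1,b)$ lies in the $r$-set. By Proposition \ref{prop:PsiTabChar}(\ref{enu:PsiTabChar1}) the $r$-set equals the $B_r$-strip, where $B_r$ is its end-box, so it suffices to show that the last box of row $d$ also lies in this strip; then $(d,b)$ is labelled $r$ and $a_b=r=c_b$. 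This reduces everything to the following geometric claim, which I expect to be the main obstacle: in any Young diagram, if rows $d$ and $d+1$ have the same nonzero length $b$, then the last box of row $d$ lies in the $B$-strip of $B=(d+1,b)$. Granting the claim, $(d,b)$ lies in the $B$-strip; and since $(d+1,b)$ itself lies in the $B_r$-strip, Proposition \ref{prop:BasicProps}(\ref{enu:BasicProps4b}) makes the prime path of $(d+1,b)$ a subpath of the prime path of $B_r$, so the $B$-strip is contained in the $B_r$-strip and $(d,b)$ lies in the $B_r$-strip, as required.

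To establish the geometric claim I would use the description of a strip in Proposition \ref{prop:BasicProps}(\ref{enu:BasicProps1}): since $B=(d+1,b)$ sits at the end of row $d+1$, it is equivalent to show that the prime path of $B$ has height at least $2$, i.e.\ that it reaches up into row $d$. Following the silhouette of the diagram as in Definition \ref{def:PrimePathOfBoxOrRowCB}, the prime path begins with the vertical edge at the end of row $d+1$ and arrives at the grid point $p$ at the top-right corner of $(d+1,b)$. The four cells meeting at $p$ are $(d,b),(d,b+1),(d+1,b),(d+1,b+1)$; here $(d,b)$ and $(d+1,b)$ are present, while both cells in column $b+1$ are absent because $\lambda_d=\lambda_{d+1}=b$. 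Hence the diagram boundary through $p$ runs vertically, so the path continues upward along the vertical edge at the end of row $d$ rather than turning; this edge therefore lies on the prime path of $B$, the height is at least $2$, and $(d,b)$ lies in the $B$-strip.

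The only genuinely delicate point is this last local analysis: one must match the orientation conventions of the Dyck-path description in Definition \ref{def:PrimePathOfBoxOrRowCB} to the configuration of cells around $p$ and confirm that the prime path does not return to its starting level before passing $(d,b)$ (it cannot, since the two steps out of the bottom of the starting edge are both vertical, strictly increasing the height). Everything preceding and following that claim is bookkeeping with the truncations $T^{(r)}$ and the strip characterization already available.
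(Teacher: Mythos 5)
Your proposal is correct and takes essentially the same route as the paper's proof: induction reduces the lemma to showing $T(d,b)=T(d+1,b)$, which you establish inside $T^{(r)}$ (with $r=T(d+1,b)$) by showing the prime path at the end of row $d$ is nested inside the prime path of $(d+1,b)$, hence inside the prime path of the end-box of the $r$-set, and concluding via Proposition \ref{prop:BasicProps}(\ref{enu:BasicProps4b}) that $(d,b)$ lies in that strip. The only difference is one of detail: the paper simply asserts the subpath relation between the prime paths of $(d,b)$ and $(d+1,b)$, whereas you justify it by the local analysis of the silhouette at the shared corner (two consecutive north steps), which is a correct and welcome elaboration rather than a different approach.
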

\begin{proof}
Let $b$ be the length of row $d$. If $b=0$, then this is clear,
so assume $b>0$. By induction it suffices to show that $T(d,b)=T(d+1,b)$.
Let $r=T(d+1,b)$ and $B$ the end-box of the $r$-set. $T^{(r)}$
contains $(d+1,b)$ so must contain $(d,b)$. The prime path of $(d,b)$
in $T^{(r)}$ is a subpath of the prime path of $(d+1,b)$ which in
turn is a subpath of the prime path of $B$. By property \ref{prop:BasicProps}(\ref{enu:BasicProps4b}),
$T(d+1,b)=T(d,b)$. \end{proof}
\begin{thm}
\label{thrm:PrePhi}Let $i\geq-1$, $n\geq1$ and $0\leq r\leq n+i$.
For $C\in\chain_{i}(n)$, define \textcolor{green}{$\widehat{C}(r)$}
as follows. If there exists $k\in[n-1]$ such that $C(k,n-k)\leq r$,
then let $d$ be minimal for $k$; otherwise, set $d=n$. $\widehat{C}(r)$
is the tableau obtained after performing the following iterative steps:
\begin{enumerate}
\item \label{enu:PrePhi1}Start with $\alpha_{d}(C^{(r)})$.
\item \label{enu:PrePhi2}Obtain $\beta_{d}(\alpha_{d}(C^{(r)}))$.
\item \label{enu:PrePhi3}For elements greater than $r$ in $C$:

\begin{enumerate}
\item \label{enu:PrePhi3a}For all those of row index less than $d$, translate
them to the right one unit to our construction, and increment their
labels by one.
\item \label{enu:PrePhi3b}For all those of row index greater than $d$,
translate them down one unit to our construction, and increment their
labels by one.
\end{enumerate}
\end{enumerate}
Then the map $C\mapsto\widehat{C}(r)$ is a bijection from $\chain_{i}(n)$
to $\{\widetilde{C}\in\chain_{i}(n+1)\mid\widetilde{C}$ has an $(r+1)^{+}$-full-set$\}$.
\end{thm}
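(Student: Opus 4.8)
The plan is to check that the three construction steps assemble into a well-defined map landing in the asserted codomain, and then to write down an explicit inverse so that bijectivity reduces almost entirely to the bijection of Theorem \ref{thm:Beta}. Throughout, $d$ denotes the index produced by the recipe, and I use that $C^{(r)}$ is itself a $\psi$-tableau (Proposition \ref{prop:BasicProps}\,(\ref{enu:BasicProps6})).

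First I would show $C^{(r)}\in\mathbb{X}_{d,n-1}^{r}$, so that steps (\ref{enu:PrePhi1})--(\ref{enu:PrePhi2}) are an instance of Theorem \ref{thm:Beta}. Condition $(x1)$ is inherited from $C$; condition $(x2)$ holds because, by minimality of $d$, the box $(d,n-d)$ is the outer-diagonal box of least row index lying in $C^{(r)}$, so row $d$ has length exactly $n-d$. The content is $(x3)$, that the prime path of row $d$ in $C^{(r)}$ has height $d$. I would argue this on the Dyck-path side: minimality of $d$ means no row $k<d$ of $C^{(r)}$ reaches the diagonal, so the path has no diagonal contact strictly between the point where the right edge of $(d,n-d)$ meets the diagonal and the terminus of the path; hence the prime subpath beginning at the end of row $d$ runs to the end and has height $d$ (equivalently, by Proposition \ref{prop:BasicProps}\,(\ref{enu:BasicProps1}), its strip is the set of last boxes of rows $1,\dots,d$). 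The degenerate case $d=n$ is the same statement for the empty row $n$. Applying Theorem \ref{thm:Beta} gives that $Z:=\beta_{d}(\alpha_{d}(C^{(r)}))$ is a $\psi$-tableau in $\mathbb{Z}_{d,n}^{r+1}$, whose $(r+1)$-set is the $(d,n-d+1)$-strip, with end-box on the outer diagonal of $\delta_{n}$ and prime path of height $d$.

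Step (\ref{enu:PrePhi3}) then re-attaches the labels of $C$ exceeding $r$. The key structural fact is that the last box of row $d$ of $C$ carries a label $\le r$, so every label $>r$ sits strictly above or strictly below row $d$, and since a strip is a set of last boxes of consecutive rows (Proposition \ref{prop:BasicProps}\,(\ref{enu:BasicProps1})) no such strip crosses row $d$. Thus each higher strip is moved by a single honest translation, right by one above row $d$ and down by one below, exactly matching the identification of regions supplied by Corollary \ref{cor:Beta}\,(\ref{enu:CorBetaUnlabeledTranslate})--(\ref{enu:CorBetaAllUnlabeled}). Invoking the Translation Lemma (Lemma \ref{lem:TranslationLemma}) strip by strip together with Proposition \ref{prop:PsiTabChar}\,(\ref{enu:PsiTabChar3}) shows $\widehat{C}(r)$ is a $\psi$-tableau; its shape is $\delta_{n}$ and its length is $n+i+1$ by the box count of Corollary \ref{cor:Beta}, so $\widehat{C}(r)\in\chain_{i}(n+1)$. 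Finally the $(r+1)$-set is an $(r+1)^{+}$-full-set: it begins in row $1$ (its prime path has height $d$) and ends on the outer diagonal at $(d,n-d+1)$, and the plus condition holds since its southwest neighbor $(d+1,n-d)$ is the copy of the last box of row $d$ created by $\alpha_{d}$, hence has label $\le r<r+1$ (for $d=n$ the end-box lies in the bottom row and the condition is automatic).

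For bijectivity I would build the inverse. Given $\widetilde{C}\in\chain_{i}(n+1)$ whose $(r+1)$-set is a plus-full-set, let $d$ be the row index of its end-box; the full-set property makes the $(r+1)$-set the $(d,n-d+1)$-strip of last boxes of rows $1,\dots,d$, so $\beta_{d}^{-1}(\widetilde{C}^{(r+1)})=\widetilde{C}^{(r)}$, while the plus condition forces rows $d$ and $d+1$ of $\widetilde{C}^{(r)}$ to have equal length and hence to be identical by Lemma \ref{lem:PsiTableauxEqualLengthRowsProp}. These yield $(z1)$--$(z4)$, so $\widetilde{C}^{(r+1)}\in\mathbb{Z}_{d,n}^{r+1}$ and $\alpha_{d}^{-1}(\beta_{d}^{-1}(\widetilde{C}^{(r+1)}))\in\mathbb{X}_{d,n-1}^{r}$ by Theorem \ref{thm:Beta}; reversing the translations of step (\ref{enu:PrePhi3}) recovers a $\psi$-tableau $C$ of shape $\delta_{n-1}$ with $\widehat{C}(r)=\widetilde{C}$, and the two passages are mutually inverse by the bijectivity in Theorem \ref{thm:Beta} on the part labeled $\le r+1$ and by the invertibility of the piecewise translation on the part labeled $>r$. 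I expect the genuine obstacle to be precisely this preservation of the $\psi$-tableau property in step (\ref{enu:PrePhi3}) and its reverse: each higher strip must be tested against Proposition \ref{prop:PsiTabChar}\,(\ref{enu:PsiTabChar3}) in an ambient shape that differs from the old one by a \emph{nonuniform} right/down shift. The point that rescues this is that no higher strip meets row $d$, so each is a true translate and Lemma \ref{lem:TranslationLemma} applies verbatim once Corollary \ref{cor:Beta} has matched the surrounding unlabeled regions; organizing these local checks into the global claim is the technical heart of the argument.
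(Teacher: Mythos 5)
Your proposal is correct and takes essentially the same route as the paper's own proof: verify $C^{(r)}\in\mathbb{X}_{d,n-1}^{r}$ so steps (\ref{enu:PrePhi1})--(\ref{enu:PrePhi2}) are an instance of Theorem \ref{thm:Beta}, check the plus condition at $(d,n-d+1)$ via the copied row from $\alpha_{d}$, invert using Lemma \ref{lem:PsiTableauxEqualLengthRowsProp}, and reduce preservation of the $\psi$-property under step (\ref{enu:PrePhi3}) to Corollary \ref{cor:Beta} and the Translation Lemma --- exactly the paper's claim (\ref{eq:ConditionX}). The only difference is one of granularity: where you assert that no higher strip meets row $d$ so each is a true translate, the paper's two-case analysis additionally invokes Proposition \ref{prop:BasicProps}(\ref{enu:BasicProps4c}) to pin the below-row-$d$ strips (and their enclosures) inside the region that translates rigidly, the detail you correctly flag as the technical heart.
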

\begin{center}
\begin{figure}[h]
\begin{minipage}[c]{0.14\columnwidth}%
\vspace{0.15in}

\begin{flushright}
$C$
\par\end{flushright}

\vspace{0.33in}

\begin{flushright}
\textit{Step 1}\\
\textit{$\alpha_{d}(C^{(r)})$}
\par\end{flushright}

\vspace{0.27in}

\begin{flushright}
\textit{Step 2}\\
\textit{$\beta_{d}(\alpha_{d}(C^{(r)}))$}
\par\end{flushright}

\vspace{0.25in}

\begin{flushright}
\textit{Step }3\\
$\widehat{C}(r)$
\par\end{flushright}%
\end{minipage}\hspace{5bp}%
\begin{minipage}[c]{0.81\columnwidth}%
\begin{flushleft}
\hspace{0.1in}$\begin{array}{c}
r=0\\
d=5
\end{array}$\hspace{0.2in}$\begin{array}{c}
r=1\\
d=5
\end{array}$\hspace{0.2in}$\begin{array}{c}
r=2\\
d=5
\end{array}$\hspace{0.2in}$\begin{array}{c}
r=3\\
d=3
\end{array}$\hspace{0.2in}$\begin{array}{c}
r=4\\
d=3
\end{array}$\hspace{0.2in}$\begin{array}{c}
r=5\\
d=1
\end{array}$\hspace{0.2in}$\begin{array}{c}
r=6\\
d=1
\end{array}$
\par\end{flushleft}

\begin{flushleft}
\includegraphics[scale=0.55]{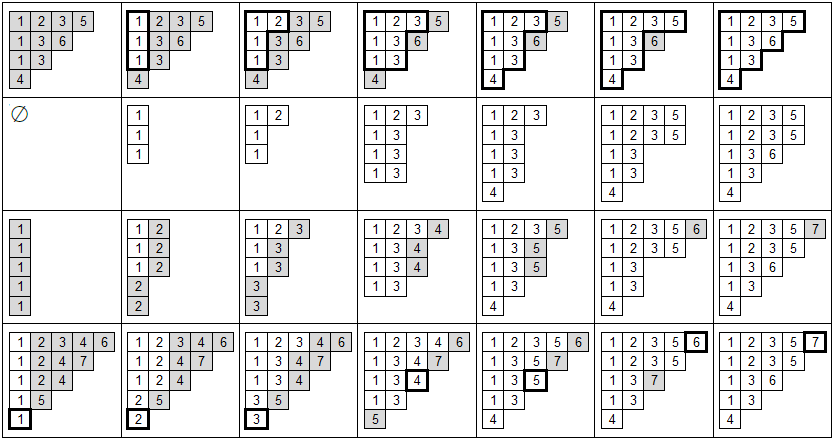}
\par\end{flushleft}%
\end{minipage}\caption{\label{fig:4-4}$\widehat{C}(r)$ is computed for each $0\leq r\leq6$
for a $C\in\chain_{1}(5)$.}
\end{figure}

\par\end{center}

Examples for a maximal chain $C\in\chain_{1}(5)$ are shown in Figure
\ref{fig:4-4}. $\widehat{C}(r)$ is computed for each $0\leq r\leq6$.
The outline of $C^{(r)}$ is bolded in the first row of the figure,
and its image under $\alpha_{d}$ is shown in \textit{Step 1}. Boxes
labeled with $r+1$ resulting from $\beta_{d}$ are grayed in \textit{Step
2}. Elements greater than $r$ in $C$ are grayed in the first row
of the figure. Their translated counterparts are grayed in \textit{Step
3}, and the border of $(d,n-d+1)$ is bolded. 
\begin{proof}[Proof of Theorem \ref{thrm:PrePhi}]
Suppose $C\in\chain_{i}(n)$. Our choice of $d$ is unique for $C$
and $r$. If $d=n$, then row $d$ in $C$ is empty, and $C^{(r)}$
is contained in $\delta_{n-2}$. If $d\in[n-1]$, then $C(d,n-d)\leq r$,
and $(d,n-d)$ is the box of least row index in the outer diagonal
of $C^{(r)}$. In either case, the height of the prime path of row
$d$ in $C^{(r)}$ is $d$. Thus $C^{(r)}\in\mathbb{X}_{d,n-1}^{r}$,
and evidently $\beta_{d}(\alpha_{d}(C^{(r)}))\in\mathbb{Z}_{d,n}^{r+1}$.
It also follows from Corollary \ref{cor:Beta}(\ref{enu:CorBetaUnlabeledTranslate})-(\ref{enu:CorBetaAllUnlabeled})
and by way of steps (\ref{enu:PrePhi3a})-(\ref{enu:PrePhi3b}) that
$\widehat{C}(r)$ is a tableau of shape $\delta_{n}$ and of length
$l(C)+1=n+i+1$. Moreover, the $(r+1)$-set in $\widehat{C}(r)$ begins
in the first row, and its end-box $(d,n-d+1)$ is in the outer diagonal
(see Corollary \ref{cor:Beta}(\ref{enu:CorBetaODBoxes})); so it
is a full-set. If $d\in[n-1]$, then $(d+1,n-d)\in\alpha_{d}(C^{(r)})=(\widehat{C}(r))^{(r)}$
which implies that $(\widehat{C}(r))(d+1,n-d)\leq r<r+1=(\widehat{C}(r))(d,n-d+1)$.
Thus the $(r+1)$-set is a plus-full-set. 

On the other hand suppose $\widetilde{C}\in\chain_{i}(n+1)$ has an
$(r+1)^{+}$-full-set. The $(r+1)^{+}$-full-set in $\widetilde{C}$
begins in the first row and there is a unique $d\in[n]$ satisfying
$\widetilde{C}(d,n-d+1)=r+1$. The height of the prime path of $(d,n-d+1)$
in $\widetilde{C}^{(r+1)}$ is $d$. We claim that rows $d$ and $d+1$
in $\widetilde{C}^{(r)}$ are identical. By Lemma \ref{lem:PsiTableauxEqualLengthRowsProp},
it suffices to show that those rows have equal length. If $d=n$,
then $\widetilde{C}(d,n-d+1)=\widetilde{C}(n,1)=r+1$, which implies
that rows $d$ and $d+1$ are empty in $\widetilde{C}^{(r)}$. So
suppose $d\in[n-1]$. We have that $\widetilde{C}(d+1,n-d)<\widetilde{C}(d,n-d+1)=r+1$
(by Definition \ref{def:FS}) and that $\widetilde{C}(d,n-d)<\widetilde{C}(d,n-d+1)=r+1$
(rows strictly increase). Thus $(d+1,n-d)$ and $(d,n-d)$ are the
last boxes of their rows in $\widetilde{C}^{(r)}$. Our claim is proved.
Thus $\widetilde{C}^{(r+1)}\in\mathbb{Z}_{d,n}^{r+1}$, and $\alpha_{d}^{-1}\beta_{d}^{-1}(\widetilde{C}^{(r+1)})\in\mathbb{X}_{d,n-1}^{r}$.
It also follows from Corollary \ref{cor:Beta}(\ref{enu:CorBetaUnlabeledTranslate})-(\ref{enu:CorBetaAllUnlabeled})
that the construction obtained from $\widetilde{C}$ by reversing
steps (\ref{enu:PrePhi1})-(\ref{enu:PrePhi3}) is a tableau of shape
$\delta_{n-1}$ and of length $l(\widetilde{C})-1=n+i$.

If $r=n+i$, we are done; otherwise, assume $0\leq r<n+i$. Suppose
that $\widetilde{C}$ is obtained from $C\in\chain_{i}(n)$ by way
of steps (\ref{enu:PrePhi1})-(\ref{enu:PrePhi3}), or that $C$ is
obtained from $\widetilde{C}\in\chain_{i}(n+1)$ having an $(r+1)^{+}$-full-set
by reversing those steps. We will show that $C$ is a $\psi$-tableau
if and only if $\widetilde{C}$ is a $\psi$-tableau. Based on this
fact, the map $C\mapsto\widehat{C}(r)$ is well-defined and surjective,
and it is clearly injective. Let $r<k\leq n+i$, $B_{k}$ the end-box
of the $k$-set in $C$, and $\widetilde{B}_{k+1}$ the end-box of
the $(k+1)$-set in $\widetilde{C}$. We claim that
\begin{equation}
\text{the }k\text{-set is the }B_{k}\text{-strip in }C{}^{(k)}\text{ if and only if the }(k+1)\text{-set is the }\widetilde{B}_{k+1}\text{-strip in }\widetilde{C}^{(k+1)}.\label{eq:ConditionX}
\end{equation}
Since $C^{(r)}$ and $\widetilde{C}^{(r+1)}$ are $\psi$-tableaux,
(\ref{eq:ConditionX}) implies that $C$ is a $\psi$-tableau if and
only if $\widetilde{C}$ is a $\psi$-tableau by way of Proposition
\ref{prop:PsiTabChar}(\ref{enu:PsiTabChar2}). The proof of (\ref{eq:ConditionX})
relies on Corollary \ref{cor:Beta}. There are two cases.

In the first case, the following three equivalent conditions hold
due to step (\ref{enu:PrePhi3a}) (or its reverse): $B_{k}\in C$
has row index less than $d$, $\widetilde{B}_{k+1}\in\widetilde{C}$
has row index less than $d$, and $B_{k}$ translates to the right
one unit to $\widetilde{B}_{k+1}$. Its clear that the $B_{k}$-strip
in $C^{(k)}$ translates to the right one unit to the $\widetilde{B}_{k+1}$-strip
in $\widetilde{C}^{(k+1)}$. (\ref{eq:ConditionX}) then follows by
way of step (\ref{enu:PrePhi3a}).

In the second case, the following three equivalent conditions hold
due to step (\ref{enu:PrePhi3b}) (or its reverse): $B_{k}\in C$
has row index greater than $d$, $\widetilde{B}_{k+1}\in\widetilde{C}$
has row index greater than $d+1$, and $B_{k}$ translates down one
unit to $\widetilde{B}_{k+1}$. Since $(d,n-d)$ is in the outer diagonal
of $C^{(k)}$, it follows by property \ref{prop:BasicProps}(\ref{enu:BasicProps4c})
that the $B_{k}$-strip in $C^{(k)}$ begins in a row of index greater
than $d$. Likewise, since $(d+1,n-d)$ is in the outer diagonal of
$\widetilde{C}^{(k+1)}$, the $\widetilde{B}_{k+1}$-strip in $\widetilde{C}^{(k+1)}$
begins in a row of index greater than $d+1$. The shape of $\{(x,y)\in C^{(k)}\mid x\geq d\}$
is the shape of $\{(x,y)\in\widetilde{C}^{(k+1)}\mid x\geq d+1\}$
and those sets contain the enclosures of the $B_{k}$-strip in $C^{(k)}$
and of the $\widetilde{B}_{k+1}$-strip in $\widetilde{C}^{(k+1)}$,
respectively. By the translation lemma, the $B_{k}$-strip in $C^{(k)}$
translates down one unit to the $\widetilde{B}_{k+1}$-strip in $\widetilde{C}^{(k+1)}$.
(\ref{eq:ConditionX}) then follows by way of step (\ref{enu:PrePhi3b}).
\end{proof}
The map in Theorem \ref{thrm:PrePhi} preserves full-sets and plus-full-sets
in the following sense.
\begin{prop}
\label{prop:PFSCondition}Let $i\geq-1$, $n\geq1$ and $0\leq r\leq n+i$.
Suppose $C\in\chain_{i}(n)$. Then:
\begin{enumerate}
\item \label{enu:PFSCondition1}For $j\in[r]$, $C$ has a $j$-full-set
(respectively, $j^{+}$-full-set) if and only if $\widehat{C}(r)$
has a $j$-full-set (respectively, $j^{+}$-full-set).
\item \label{enu:PFSCondition2}For $r<j\leq n+i$, $C$ has a $j$-full-set
(respectively, $j^{+}$-full-set) if and only if $\widehat{C}(r)$
has a $(j+1)$-full-set (respectively, $(j+1)^{+}$-full-set).
\end{enumerate}
\end{prop}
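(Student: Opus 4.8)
The plan is to read off from the construction in Theorem~\ref{thrm:PrePhi} an explicit dictionary between the boxes of $C$ and those of $\widehat{C}(r)$, and then check that each of the three ingredients of a (plus-)full-set in Definition~\ref{def:FS}---beginning in row~$1$, ending in the outer diagonal, and the southwest-neighbor inequality---survives the passage to $\widehat{C}(r)$. First I would record the dictionary. A box of label $\le r$ is repositioned by $\alpha_{d}$: rows $1,\ldots,d$ stay put, rows of index $>d$ drop one row, and row $d$ is duplicated into row $d+1$. A box of label $j>r$ is moved by steps (\ref{enu:PrePhi3a})--(\ref{enu:PrePhi3b}): if its row index is $<d$ it shifts one column right and is relabeled $j+1$, and if its row index is $>d$ it drops one row and is relabeled $j+1$. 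The crucial compatibility is that the last box of row $d$ in $C$ is the outer-diagonal box $(d,n-d)$, which carries a label $\le r$ by the minimal choice of $d$; hence row $d$ of $C$ contains \emph{only} labels $\le r$, and the two regimes never interfere. Under the induced label correspondence $j\mapsto j$ for $j\le r$ and $j\mapsto j+1$ for $j>r$, part~(\ref{enu:PFSCondition1}) is exactly the claim about labels $\le r$ and part~(\ref{enu:PFSCondition2}) the claim about labels $>r$; the freshly inserted label $r+1$ is the new plus-full-set and does not enter the statement.

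The structural fact that organizes the case analysis is this: since a $j$-set is a $B_{j}$-strip, Proposition~\ref{prop:BasicProps}(\ref{enu:BasicProps1}) shows that a \emph{full}-set is precisely the set of last boxes of the consecutive rows $1,\ldots,k$, with end-box $(k,n-k)$ on the outer diagonal. Because the last box of row $d$ carries a label $\le r$, a full-set of label $j\le r$ must have $k\ge d$, whereas a full-set of label $j>r$ cannot occupy row $d$ and so must have $k<d$. This dichotomy determines which translation rule governs the end-box in each part, and it is what keeps the bookkeeping clean.

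Next I would carry out the three checks, in both directions. ``Begins in row~$1$'' is immediate, since row~$1$ is fixed by every rule (as $1\le d$), so the set meets row~$1$ after the move iff it did before. ``Ends in the outer diagonal'': for $j\le r$ with $k\ge d$, the end-box $(k,n-k)$ is sent to $(k+1,n-k)$---by the down-shift if $k>d$, or by the duplication of row $d$ if $k=d$---which lies on the outer diagonal of $\delta_{n}$; for $j>r$ with $k<d$, it is sent to $(k,n+1-k)$, again on the outer diagonal. The outer-diagonal bijection of Corollary~\ref{cor:Beta}(\ref{enu:CorBetaODBoxes}), together with Proposition~\ref{prop:BasicProps}(\ref{enu:BasicProps5}), guarantees these identifications are bijective and that non-full-sets remain non-full. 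For the southwest-neighbor condition, in the generic subcases the southwest neighbor of the image end-box is literally the image of $(k+1,n-k-1)$, so the inequality $C(k+1,n-k-1)<j$ transfers verbatim once one matches the neighbor's relabeling to the relabeling of $j$.

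The main obstacle is the handful of boundary subcases in part~(\ref{enu:PFSCondition2}). When $k+1=d$ the southwest neighbor of the image end-box is not the image of $(k+1,n-k-1)$ but the freshly inserted box $(d,n-d+1)$, and when $d=n$ (so $k=n-1$, the automatic-plus branch in $C$) the end-box $(n-1,2)$ of $\widehat{C}(r)$ has southwest neighbor $(n,1)$, again a new box. In each such case the plan is to compute directly from the construction that the relevant neighbor carries label $r+1$, whence $r+1<j+1$ holds because $j>r$, so the plus-condition survives even though the branch of Definition~\ref{def:FS} that certifies it changes. Pinning down these labels $\widehat{C}(r)(d,n-d+1)=r+1$ and $\widehat{C}(r)(n,1)=r+1$, and matching them against the row-$k=n-1$ boundary branch, is the delicate point; everything else follows mechanically from the dictionary and the row-$1$-to-$k$ structure of full-sets.
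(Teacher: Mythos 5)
Your proposal is correct and takes essentially the same route as the paper's own proof: both arguments rest on the dichotomy forced by the minimal choice of $d$ (full-sets with label $\le r$ must end in rows of index $\ge d$, those with label $>r$ in rows of index $<d$), transfer the begin/end and southwest-neighbor conditions through the box-and-label correspondence of steps (\ref{enu:PrePhi1})--(\ref{enu:PrePhi3}), and isolate the same boundary subcases ($k+1=d$ with $k\in[n-2]$, and $k=n-1$ forcing $d=n$) in which the newly created box labeled $r+1$ becomes the relevant southwest neighbor, so that $r+1<j+1$ closes the argument. The only differences are expository (your ``dictionary'' plus monotone-relabeling framing versus the paper's explicit label computations), not mathematical.
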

\begin{proof}
By our choice of $d$ in Theorem \ref{thrm:PrePhi}, $d\in[n]$ and
the following two items hold.
\begin{itemize}
\item Each box of row index less than $d$ in the outer diagonal of $C$
has a label greater than $r$. If $d\in[n-1]$, then $C(d,n-d)\leq r$.
\item $(\widehat{C}(r))(d,n-d+1)=r+1$. Each box of row index less than
$d$ in the outer diagonal of $\widehat{C}(r)$ has a label greater
than $r+1$. If $d\in[n-1]$, then $(\widehat{C}(r))(d+1,n-d)\leq r$.
\end{itemize}
(\ref{enu:PFSCondition1}). Suppose $j\in[r]$. If the $j$-set is
a full-set in $C$, it ends in a row of index at least $d$. If the
$j$-set is a full-set in $\widehat{C}(r)$, it ends in a row of index
at least $d+1$. It then follows from step \ref{thrm:PrePhi}(\ref{enu:PrePhi1})
that $C$ has a $j$-full-set if and only if $\widehat{C}(r)$ has
a $j$-full-set. In that case, we have $C(k,n-k)=j=(\widehat{C}(r))(k+1,n-k)$
for a unique $k$ satisfying $d\leq k\leq n-1$. Assume that case.
If $k=n-1$, then both $C$ and $\widehat{C}(r)$ have $j^{+}$-full-sets,
so suppose $k\in[n-2]$. Then it also follows from step \ref{thrm:PrePhi}(\ref{enu:PrePhi1})
that $C(k+1,n-k-1)<C(k,n-k)$ if and only if $(\widehat{C}(r))(k+2,n-k-1)<(\widehat{C}(r))(k+1,n-k)$
(in that case, $C(k+1,n-k-1)=(\widehat{C}(r))(k+2,n-k-1)$), finishing
the proof.

(\ref{enu:PFSCondition2}). Suppose $r<j\leq n+i$. If the $j$-set
is a full-set in $C$, it ends in a row of index less than $d$ (otherwise,
row $d$ in $C$ would have the label $j>r$). If the $(j+1)$-set
is a full-set in $\widehat{C}(r)$, it ends in a row of index less
than $d$ (otherwise, row $d$ in $\widehat{C}(r)$ would have the
label $j+1>r+1$). It then follows from step \ref{thrm:PrePhi}(\ref{enu:PrePhi3a})
that $C$ has a $j$-full-set if and only if $\widehat{C}(r)$ has
a $(j+1)$-full-set. In that case, we have $C(k,n-k)=j$ and $(\widehat{C}(r))(k,n-k+1)=j+1$
for a unique $k$ satisfying $1\leq k<d$. Supposing that case, there
are three subcases.

In the first subcase, suppose that $k=n-1,$ so $d=n$. Thus $C(n-1,1)=j$,
and $(\widehat{C}(r))(n,1)=r+1<j+1=(\widehat{C}(r))(n-1,2)$, so that
$C$ has a $j^{+}$-full-set and $\widehat{C}(r)$ has a $(j+1)^{+}$-full-set.

In the second subcase, suppose that $k\in[n-2]$ and that $k+1=d$.
Then $C(k+1,n-k-1)=C(d,n-d)\leq r<j=C(k,n-k)$ and $(\widehat{C}(r))(k+1,n-k)=(\widehat{C}(r))(d,n-d+1)=r+1<j+1=(\widehat{C}(r))(k,n-k+1)$,
so that $C$ has a $j^{+}$-full-set and $\widehat{C}(r)$ has a $(j+1)^{+}$-full-set.

In the third subcase, suppose that $k\in[n-2]$ and that $k+1<d$.
Then $C(k+1,n-k-1)>r$ and $(\widehat{C}(r))(k+1,n-k)>r+1$. It follows
from step \ref{thrm:PrePhi}(\ref{enu:PrePhi3a}) that $C(k+1,n-k-1)+1=(\widehat{C}(r))(k+1,n-k)$,
and thus that $C(k+1,n-k-1)<C(k,n-k)=j$ if and only if $(\widehat{C}(r))(k+1,n-k)<(\widehat{C}(r))(k,n-k+1)=j+1$.
Therefore $C$ has a $j^{+}$-full-set if and only if $\widehat{C}(r)$
has a $(j+1)^{+}$-full-set.\end{proof}
\begin{thm}
\label{thrm:PhiINR}Let $i\geq-1$, $n\geq1$ and $0\leq r\leq n+i$.
The map
\begin{eqnarray*}
\phi_{i,n}^{r}:\{C\in\chain_{i}(n)\mid\forall j\in[r],\,C\notin\full_{i}^{j}(n)\} & \rightarrow & \full_{i}^{r+1}(n+1)\\
C & \mapsto & \widehat{C}(r)
\end{eqnarray*}
is a bijection.\end{thm}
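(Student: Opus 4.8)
The plan is to realize $\phi_{i,n}^{r}$ as a restriction of the bijection already furnished by Theorem \ref{thrm:PrePhi}, and to match up domain and codomain in the language of plus-full-sets by invoking Proposition \ref{prop:PFSCondition}. The argument is essentially a bookkeeping identification; the only genuine work is translating the set-membership condition that defines the domain into a statement about which $j$-sets are plus-full-sets.

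First I would reinterpret the domain. By Definition \ref{def:FS}, a chain $C$ lies in $\full_{i}^{j}(n)$ precisely when its $j$-set is a plus-full-set while no $k$-set with $k<j$ is; equivalently, $j$ is the least index whose set is a plus-full-set. Since the sets $\full_{i}^{j}(n)$ are pairwise disjoint (Remark \ref{rem:PFSSubsetsDisjoint}), the condition $\forall j\in[r],\,C\notin\full_{i}^{j}(n)$ holds exactly when $C$ has no plus-full-set of index in $[r]$, that is, when the $j$-set of $C$ fails to be a plus-full-set for every $j\in[r]$. Call this set of chains $D$; it is the stated domain of $\phi_{i,n}^{r}$.

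Next I would pin down the codomain. By Theorem \ref{thrm:PrePhi}, $C\mapsto\widehat{C}(r)$ is a bijection from all of $\chain_{i}(n)$ onto the chains in $\chain_{i}(n+1)$ possessing an $(r+1)^{+}$-full-set; in particular $\widehat{C}(r)$ always has an $(r+1)^{+}$-full-set, which is the first of the two requirements for membership in $\full_{i}^{r+1}(n+1)$. The second requirement is that no $j$-set of $\widehat{C}(r)$ with $j\in[r]$ be a plus-full-set. Here Proposition \ref{prop:PFSCondition}(\ref{enu:PFSCondition1}) enters: for each $j\in[r]$, the $j$-set of $C$ is a plus-full-set if and only if the $j$-set of $\widehat{C}(r)$ is. Consequently $\widehat{C}(r)\in\full_{i}^{r+1}(n+1)$ if and only if $C\in D$.

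From these two observations the theorem follows at once. The bijection $C\mapsto\widehat{C}(r)$ of Theorem \ref{thrm:PrePhi} restricts to a bijection from $D$ onto the set of $\widetilde{C}\in\chain_{i}(n+1)$ that have an $(r+1)^{+}$-full-set and no $j^{+}$-full-set for any $j\in[r]$, and by the equivalence just established this target set is exactly $\full_{i}^{r+1}(n+1)$. Since $D$ is precisely the asserted domain, the restricted map is the claimed bijection, with injectivity and surjectivity inherited verbatim from Theorem \ref{thrm:PrePhi}. The main obstacle is purely conceptual rather than computational: correctly converting the disjoint-union membership condition defining the domain into the plus-full-set dictionary, after which everything reduces to the two cited results.
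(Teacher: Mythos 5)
Your proposal is correct and follows exactly the paper's own route: the paper's proof is the one-line observation that the claim follows from Theorem \ref{thrm:PrePhi} together with Proposition \ref{prop:PFSCondition}(\ref{enu:PFSCondition1}), which is precisely the restriction argument you spell out (your domain reinterpretation is also stated in the paper as Remark \ref{rem:DomainPhiEquivCond}). You have simply made explicit the bookkeeping the paper leaves implicit.
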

\begin{rem}
\label{rem:DomainPhiEquivCond}The condition on the domain of $\phi_{i,n}^{r}$,
$\forall j\in[r]$, $C\notin\full_{i}^{j}(n)$, is equivalent to $\forall j\in[r]$,
the $j$-set in $C$ is not a plus-full-set.
\end{rem}
In the examples in Figure \ref{fig:4-4}, $C$ has no plus-full-sets.
Thus for each $0\leq r\leq6$, $C$ is in the domain of $\phi_{1,5}^{r}$.
\begin{proof}[Proof of Theorem \ref{thrm:PhiINR}]
In lieu of Theorem \ref{thrm:PrePhi}, this follows from Proposition
\ref{prop:PFSCondition}(\ref{enu:PFSCondition1}).\end{proof}
\begin{cor}
Let $i\geq-1$, $n\geq1$ and $0\leq r\leq n+i$. Then
\[
\#\full_{i}^{r+1}(n+1)=\#\chain_{i}(n)-\sum_{j=1}^{r}\#\full_{i}^{j}(n).
\]
\end{cor}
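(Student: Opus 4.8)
The plan is to read the equality off directly from the bijection established in Theorem~\ref{thrm:PhiINR}, combined with the disjointness recorded in Remark~\ref{rem:PFSSubsetsDisjoint}. Since $\phi_{i,n}^{r}$ is a bijection from its domain onto $\full_{i}^{r+1}(n+1)$, those two sets have equal cardinality, so it suffices to count the domain
\[
D=\{C\in\chain_{i}(n)\mid\forall j\in[r],\,C\notin\full_{i}^{j}(n)\}.
\]
Thus the whole argument reduces to evaluating $\#D$.

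First I would observe that $D$ is, by its very definition, the complement within $\chain_{i}(n)$ of the union $\bigcup_{j=1}^{r}\full_{i}^{j}(n)$. Next, by Remark~\ref{rem:PFSSubsetsDisjoint} the sets $\full_{i}^{j}(n)$ are pairwise disjoint subsets of $\chain_{i}(n)$, so the cardinality of their union is simply the sum $\sum_{j=1}^{r}\#\full_{i}^{j}(n)$ (no inclusion--exclusion beyond plain addition is required). Subtracting from $\#\chain_{i}(n)$ gives $\#D=\#\chain_{i}(n)-\sum_{j=1}^{r}\#\full_{i}^{j}(n)$, and combining this with $\#\full_{i}^{r+1}(n+1)=\#D$ yields the claim.

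There is no genuine obstacle here: all of the substantive content lies upstream in Theorem~\ref{thrm:PhiINR} (and the results feeding into it), which produces the bijection $\phi_{i,n}^{r}$. The only points to confirm are the routine bookkeeping that the domain of $\phi_{i,n}^{r}$ coincides with the set obtained from $\chain_{i}(n)$ by excising the first $r$ plus-full-set classes, and that these classes are mutually disjoint so their counts add. Both are immediate, so the corollary is essentially a restatement of the bijection in counting form.
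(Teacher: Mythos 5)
Your proposal is correct and follows exactly the paper's own argument: the paper proves this corollary by citing Theorem~\ref{thrm:PhiINR} together with Remark~\ref{rem:PFSSubsetsDisjoint}, which is precisely your combination of the bijection $\phi_{i,n}^{r}$ with the pairwise disjointness of the sets $\full_{i}^{j}(n)$. You have merely written out the routine counting step that the paper leaves implicit.
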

\begin{proof}
This is a direct implication of Theorem \ref{thrm:PhiINR}, recalling
Remark \ref{rem:PFSSubsetsDisjoint}.
\end{proof}

\section{\label{sec:formula}A formula for the number of maximal chains of
length $n+i$ in $\tam_{n}$}

The technical work in verifying the bijectivity of $\phi_{i,n}^{r}$
is complete! In this section, we gather more on properties and consequences
of this map and tie our results together to write a recursive formula
for $\#\chain_{i}(n)$. $\phi_{i,n}^{r}$ maps a maximal chain to
one with one more plus-full-set (Proposition \ref{prop:NumPFS}).
We may then write each maximal chain having a plus-full-set uniquely
in terms of one with no plus-full-sets (Corollary \ref{cor:UniqueRepresentationByPhi}).
By relating this unique representation for a maximal chain to specific
plus-full-sets that it contains (Proposition \ref{prop:TtupleRelatesToPFS}),
we obtain an expression for $\#\chain_{i}(n)$; see equation (\ref{eq:=000023Ci(n)FirstExpr}).
For each $i\geq-1$, there exists a maximal chain in $\chain_{i}(2i+3)$
containing no plus-full-sets (Lemma \ref{lem:2iPlus3}), but surprisingly,
for all $n\geq2i+4$, each maximal chain in $\chain_{i}(n)$ has a
plus-full-set (Theorem \ref{thrm:GOrE2IPlus4}). We utilize these
latter two facts to refine our expression for $\#\chain_{i}(n)$ and
show that it is a polynomial of degree $3i+3$ in Theorem \ref{thm:MainThrm}.
\begin{prop}
\label{prop:NumPFS}Each $C$ in the domain of $\phi_{i,n}^{r}$ has
one less plus-full-set than its image.\end{prop}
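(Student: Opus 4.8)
The plan is to read the result off almost directly from Proposition \ref{prop:PFSCondition}, which already records label by label how the map $C\mapsto\widehat{C}(r)$ acts on plus-full-sets. Let $p(C)$ denote the number of plus-full-sets of a $\psi$-tableau $C$; since distinct plus-full-sets carry distinct labels, $p(C)$ equals the number of labels $j$ for which the $j$-set of $C$ is a plus-full-set. First I would note that every plus-full-set of $\widehat{C}(r)$ has a label in $[n+i+1]$, because $l(\widehat{C}(r))=l(C)+1=n+i+1$ by Theorem \ref{thrm:PrePhi}.

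Next I would partition these labels into three ranges: $[r]$, the single value $r+1$, and $\{r+2,\ldots,n+i+1\}$, and treat the first and third by the iff-statements of Proposition \ref{prop:PFSCondition}. For $j\in[r]$, part (\ref{enu:PFSCondition1}) says $C$ has a $j^{+}$-full-set if and only if $\widehat{C}(r)$ has a $j^{+}$-full-set. For $r<j\leq n+i$, part (\ref{enu:PFSCondition2}) says $C$ has a $j^{+}$-full-set if and only if $\widehat{C}(r)$ has a $(j+1)^{+}$-full-set, and as $j$ runs over $\{r+1,\ldots,n+i\}$ the label $j+1$ runs over $\{r+2,\ldots,n+i+1\}$. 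Read together, these equivalences exhibit a bijection between the plus-full-sets of $C$ and the plus-full-sets of $\widehat{C}(r)$ whose label is different from $r+1$, namely $j\mapsto j$ on $[r]$ and $j\mapsto j+1$ on $\{r+1,\ldots,n+i\}$.

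Finally I would account for the label $r+1$ on its own. Since the codomain of $\phi_{i,n}^{r}$ is $\full_{i}^{r+1}(n+1)$, the image $\widehat{C}(r)$ has, by the definition of $\full_{i}^{r+1}(n+1)$, an $(r+1)^{+}$-full-set; this is the unique plus-full-set of $\widehat{C}(r)$ with label $r+1$, and it lies outside the bijection of the previous paragraph. Hence the plus-full-sets of $\widehat{C}(r)$ split into those with label $\neq r+1$ (in bijection with the plus-full-sets of $C$) together with the single one labeled $r+1$, giving $p(\widehat{C}(r))=p(C)+1$, which is precisely the claim.

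There is no genuine obstacle here beyond bookkeeping, since the substantive content is carried by Proposition \ref{prop:PFSCondition}. The one point deserving care is verifying that the three label ranges exhaust $[n+i+1]$ without overlap, so that nothing is double-counted and nothing is missed; in particular one must confirm that Proposition \ref{prop:PFSCondition}(\ref{enu:PFSCondition2}) accounts for exactly the labels $r+2$ through $n+i+1$ of $\widehat{C}(r)$, leaving $r+1$ as the single genuinely new plus-full-set.
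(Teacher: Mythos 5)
Your proof is correct and follows essentially the same route as the paper: both arguments combine Proposition \ref{prop:PFSCondition} with the fact that the image lies in $\full_{i}^{r+1}(n+1)$ and hence has the new $(r+1)^{+}$-full-set. The only cosmetic difference is that for labels $j\in[r]$ you invoke Proposition \ref{prop:PFSCondition}(\ref{enu:PFSCondition1}), whereas the paper notes directly from the domain and codomain definitions that neither $C$ nor $\phi_{i,n}^{r}(C)$ has a $j^{+}$-full-set in that range; both handle that range correctly.
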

\begin{proof}
Suppose $C$ is in the domain of $\phi_{i,n}^{r}$. By definition,
for each $j\in[r]$, neither $C$ nor $\phi_{i,n}^{r}(C)$ has a $j^{+}$-full-set.
Of course, $\phi_{i,n}^{r}(C)$ has the $(r+1)^{+}$-full-set. The
proof follows from Proposition \ref{prop:PFSCondition}(\ref{enu:PFSCondition2}).\end{proof}
\begin{defn}
\label{def:Nin}\textit{\textcolor{green}{$\nofull_{i}(n)$}} is the
set of all maximal chains in $\chain_{i}(n)$ having no plus-full-sets. 
\end{defn}
$\chain_{i}(n)$ is a disjoint union of the $n+i+1$ subsets, $\nofull_{i}(n)$
and $\full_{i}^{j}(n)$, $j\in[n+i]$, \textit{i.e.}, 
\begin{equation}
\chain_{i}(n)=\nofull_{i}(n)\biguplus\left(\biguplus_{j\in[n+i]}\full_{i}^{j}(n)\right).\label{eq:Ci(n)IsDisjUnion}
\end{equation}

\begin{cor}
\label{cor:UniqueRepresentationByPhi}Suppose the number of plus-full-sets
of some $C\in\chain_{i}(n)$ is $t>0$. Then there exists a unique
$\widetilde{C}_{1}\in\chain_{i}(n-1)$ and a unique $r_{1}$, such
that $C=\phi_{i,n-1}^{r_{1}}(\widetilde{C}_{1})$. This representation
may be extended to obtain unique representations
\begin{eqnarray*}
C & = & \phi_{i,n-1}^{r_{1}}(\widetilde{C}_{1})\\
 & = & \phi_{i,n-1}^{r_{1}}(\phi_{i,n-2}^{r_{2}}(\widetilde{C}_{2}))\\
 & \vdots\\
 & = & (\phi_{i,n-1}^{r_{1}}\circ\phi_{i,n-2}^{r_{2}}\circ\cdots\circ\phi_{i,n-t}^{r_{t}})(\widetilde{C}_{t}),
\end{eqnarray*}
until we arrive at $\widetilde{C}_{t}\in\nofull_{i}(n-t)$.\end{cor}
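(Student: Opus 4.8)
The plan is to prove the statement by induction on the number $t$ of plus-full-sets, where the inductive step is a single application of one map $\phi_{i,n-1}^{r_1}$. Throughout I would lean on the decomposition (\ref{eq:Ci(n)IsDisjUnion}) together with the fact (Remark \ref{rem:PFSSubsetsDisjoint}) that the sets $\full_i^{j}(n)$ are pairwise disjoint. First I would pin down a canonical index: since $C$ has $t>0$ plus-full-sets, there is a least $s\in[n+i]$ for which the $s$-set of $C$ is a plus-full-set, and by the very definition of $\full_i^{s}(n)$ this says precisely that $C\in\full_i^{s}(n)$; moreover, by (\ref{eq:Ci(n)IsDisjUnion}) this $s$ is the unique index with $C\in\full_i^{s}(n)$.

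Next I would establish the single peeling step. By Theorem \ref{thrm:PhiINR}, for each admissible $r$ the image of $\phi_{i,n-1}^{r}$ is exactly $\full_i^{r+1}(n)$, and these images are disjoint for distinct $r$. Hence if $C=\phi_{i,n-1}^{r_1}(\widetilde{C}_1)$ for some $\widetilde{C}_1\in\chain_i(n-1)$, then $C\in\full_i^{r_1+1}(n)$, which by the previous paragraph forces $r_1+1=s$, i.e. $r_1=s-1$; note that $0\le r_1\le(n-1)+i$, so $r_1$ lies in the allowed range. With $r_1$ thus determined, the injectivity half of Theorem \ref{thrm:PhiINR} forces $\widetilde{C}_1$ to be the unique preimage of $C$ under the bijection $\phi_{i,n-1}^{s-1}$, while surjectivity guarantees that such a preimage exists. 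This yields existence and uniqueness of the pair $(\widetilde{C}_1,r_1)$. Finally, Proposition \ref{prop:NumPFS} tells us that $\widetilde{C}_1$ has exactly $t-1$ plus-full-sets.

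It then remains to iterate. If $t-1>0$, I would apply the peeling step verbatim to $\widetilde{C}_1\in\chain_i(n-1)$ in place of $C$, producing a unique $(\widetilde{C}_2,r_2)$ with $\widetilde{C}_1=\phi_{i,n-2}^{r_2}(\widetilde{C}_2)$ and $\widetilde{C}_2$ having $t-2$ plus-full-sets; after $k$ steps $\widetilde{C}_k\in\chain_i(n-k)$ has $t-k$ plus-full-sets. Since the count drops by exactly one at each step (this is where Proposition \ref{prop:NumPFS} does the essential work), the process runs for precisely $t$ steps and terminates at $\widetilde{C}_t\in\chain_i(n-t)$ with $t-t=0$ plus-full-sets, i.e. $\widetilde{C}_t\in\nofull_i(n-t)$, giving the asserted chain of representations. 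I expect no genuine obstacle, as the hard analytic work is already packaged into Theorem \ref{thrm:PhiINR} and Proposition \ref{prop:NumPFS}; the one point demanding care is the uniqueness of $r_1$, which must be argued from the disjointness of the codomains $\full_i^{r+1}(n)$ rather than taken for granted, since a priori the same chain $C$ might lie in the image of several of the maps $\phi_{i,n-1}^{r}$.
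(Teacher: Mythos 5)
Your proposal is correct and takes essentially the same approach as the paper: both identify the unique $r_1$ via the disjointness of the codomains $\full_i^{r+1}(n)$ (the paper's equation (\ref{eq:Ci(n)IsDisjUnion}) and Remark \ref{rem:PFSSubsetsDisjoint}), invoke the bijectivity of $\phi_{i,n-1}^{r_1}$ from Theorem \ref{thrm:PhiINR} for existence and uniqueness of $\widetilde{C}_1$, and then iterate using Proposition \ref{prop:NumPFS} to drop the plus-full-set count by one at each step until reaching $\nofull_i(n-t)$.
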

\begin{proof}
The codomain of $\phi_{i,n-1}^{r}$ is $\full_{i}^{r+1}(n)$. As $r$
varies, $0\leq r\leq n-1+i$, the $\full_{i}^{r+1}(n)$ are disjoint
subsets of $\chain_{i}(n)$. $C$ is an element of exactly one of
the $\full_{i}^{r+1}(n)$, so there exists a unique $r_{1}$, such
that $\phi_{i,n-1}^{r_{1}}$ has $C$ in its codomain. Since $\phi_{i,n-1}^{r_{1}}$
is bijective, there exists a unique $\widetilde{C}_{1}\in\chain_{i}(n-1)$,
such that $C=\phi_{i,n-1}^{r_{1}}(\widetilde{C}_{1})$. By Proposition
\ref{prop:NumPFS}, the number of plus-full-sets in $\widetilde{C}_{1}$
is $t-1$. If $t=1$, then $\widetilde{C}_{1}\in\nofull_{i}(n-1)$;
otherwise, $t-1>0$, and we may repeat until we arrive at $\widetilde{C}_{t}\in\nofull_{i}(n-t)\subseteq\chain_{i}(n-t)$.\end{proof}
\begin{rem}
\label{rem:NumPFSRange0ToNminusOne}A maximal chain in $\tam_{n}$
has at most $n-1$ plus-full-sets, as bounded by the $n-1$ strictly
increasing labels in its first row. 
\end{rem}
By Corollary \ref{cor:UniqueRepresentationByPhi}, the number of maximal
chains in $\chain_{i}(n)$ with exactly $t$ plus-full-sets, $1\leq t\leq n-1$,
is the number of representations
\begin{equation}
(\phi_{i,n-1}^{r_{1}}\circ\phi_{i,n-2}^{r_{2}}\circ\cdots\circ\phi_{i,n-t}^{r_{t}})(\widetilde{C})\label{eq:MaxChainRepresentation}
\end{equation}
over $t$-tuples $(r_{1},r_{2},\ldots,r_{t})$ and over $\widetilde{C}\in\nofull_{i}(n-t)$.
Each $t$-tuple $(r_{1},r_{2},\ldots,r_{t})$ must satisfy restrictions
imposed on the $r_{j}$ in Theorem \ref{thrm:PhiINR}:
\begin{prop}
\label{prop:TtupleRelatesToPFS}Suppose that $\widetilde{C}\in\nofull_{i}(n-t)$
for some $n$ and $t$, satisfying $1\leq t\leq n-1$. A $t$-tuple
$(r_{1},r_{2},\ldots,r_{t})$ for the representation (\ref{eq:MaxChainRepresentation}),
must only satisfy $0\leq r_{1}\leq r_{2}\leq\cdots\leq r_{t}\leq n-t+i$.
The number of these, hence the number of representations (\ref{eq:MaxChainRepresentation}),
is $\binom{n+i}{t}$. 

For a $t$-tuple $(r_{1},r_{2},\ldots,r_{t})$ which satisfies the
criteria, let $C=(\phi_{i,n-1}^{r_{1}}\circ\phi_{i,n-2}^{r_{2}}\circ\cdots\circ\phi_{i,n-t}^{r_{t}})(\widetilde{C})$.
The set of specific plus-full-sets in $C$ is 
\[
\{j\mid C\text{ has a }j^{+}\text{-full-set}\}=\{r_{1}+1,r_{2}+2,\ldots,r_{t}+t\},
\]
which is a $t$-element subset of $[n+i]$ unique to $(r_{1},r_{2},\ldots,r_{t})$.\end{prop}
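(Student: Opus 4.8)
The plan is to build the composition (\ref{eq:MaxChainRepresentation}) from the inside out, applying one factor $\phi_{i,m}^{r}$ at a time and tracking, by induction, two things at once: the exact restriction each new factor imposes on its subscript, and the set of labels of the plus-full-sets of the intermediate chain. Set $D_0=\widetilde{C}\in\nofull_i(n-t)$ and, for $1\le k\le t$, let $D_k=\phi_{i,(n-t)+k-1}^{r_{t-k+1}}(D_{k-1})$, so that $D_k\in\chain_i(n-t+k)$ and $D_t=C$. The inductive claim is that $D_k$ is defined precisely when $0\le r_{t-k+1}\le r_{t-k+2}\le\cdots\le r_t\le n-t+i$, and that in this case the set of labels $j$ for which $D_k$ has a $j^+$-full-set is
\[
S_k=\{\,r_{t-k+1}+1,\ r_{t-k+2}+2,\ \ldots,\ r_t+k\,\}.
\]

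The base case $k=0$ is immediate, since $\widetilde{C}\in\nofull_i(n-t)$ gives $S_0=\emptyset$. For the inductive step, consider forming $D_k=\phi_{i,m}^{r}(D_{k-1})$ with $m=(n-t)+k-1$ and $r=r_{t-k+1}$. By Remark \ref{rem:DomainPhiEquivCond}, $D_{k-1}$ lies in the domain of this factor exactly when every plus-full-set label of $D_{k-1}$, i.e.\ every element of $S_{k-1}$, exceeds $r$; since the elements of $S_{k-1}$ are strictly increasing with least element $r_{t-k+2}+1$, this is the single inequality $r_{t-k+1}\le r_{t-k+2}$ (vacuous when $k=1$). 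The range requirement $0\le r\le m+i$ of Theorem \ref{thrm:PhiINR} reduces to $r_{t-k+1}\ge 0$, because $r_{t-k+1}\le r_t\le n-t+i\le m+i$ is already forced. Under these conditions, Proposition \ref{prop:PFSCondition} is the key tool: the factor creates the $(r+1)^+$-full-set, sends each old label $j>r$ to $j+1$, and creates no plus-full-set with label $\le r$ (by part (\ref{enu:PFSCondition1}), since $D_{k-1}$ has none). Hence $S_k=\{r+1\}\cup\{\,j+1:j\in S_{k-1}\,\}$, which unwinds to the asserted form. Stringing these conditions together over $k=1,\ldots,t$ yields exactly $0\le r_1\le\cdots\le r_t\le n-t+i$, and at $k=t$ the label set is $S_t=\{r_1+1,r_2+2,\ldots,r_t+t\}$, the claimed set of plus-full-sets of $C$.

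For the remaining assertions, note that $r_1\le\cdots\le r_t$ makes $r_1+1<r_2+2<\cdots<r_t+t$ strictly increasing, so $S_t$ has $t$ elements; and $1\le r_1+1$ together with $r_t+t\le(n-t+i)+t=n+i$ places $S_t\subseteq[n+i]$. The assignment $(r_1,\ldots,r_t)\mapsto\{r_1+1,\ldots,r_t+t\}$ is the standard bijection between weakly increasing $t$-tuples with entries in $\{0,1,\ldots,n-t+i\}$ and $t$-element subsets of $[n+i]$; its inverse recovers $r_j=s_j-j$ from the increasingly sorted elements $s_1<\cdots<s_t$ of the subset. This simultaneously gives injectivity (uniqueness to the tuple) and the count $\binom{n+i}{t}$, the number of $t$-subsets of $[n+i]$.

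The main obstacle I anticipate is the bookkeeping forced by the reversed indexing of the composition: the outermost factor carries the smallest subscript $r_1$ while the innermost carries $r_t$, so one must verify that the condition produced at stage $k$ constrains $r_{t-k+1}$ against the previously fixed, larger-indexed $r_{t-k+2}$, and that the label shift of Proposition \ref{prop:PFSCondition} is applied in the correct direction ($j\mapsto j+1$ on old labels). Ensuring that the per-stage inequalities collapse exactly to $0\le r_1\le\cdots\le r_t\le n-t+i$, with the range constraints of Theorem \ref{thrm:PhiINR} all subsumed by the innermost one, is the delicate part; the enumeration is then routine.
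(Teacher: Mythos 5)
Your proposal is correct and follows essentially the same route as the paper: peel off the composition from the innermost factor, use the domain description of Theorem \ref{thrm:PhiINR} (via Remark \ref{rem:DomainPhiEquivCond}) together with the label-shifting of Proposition \ref{prop:PFSCondition} to show the constraints collapse to $0\leq r_{1}\leq\cdots\leq r_{t}\leq n-t+i$ and that the plus-full-set labels are $\{r_{1}+1,\ldots,r_{t}+t\}$, then apply the staircase substitution $r_{k}=u_{k}-k$ to count $\binom{n+i}{t}$. Your explicit induction on the intermediate sets $S_{k}$ merely formalizes the paper's ``continuing in this manner'' step.
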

\begin{proof}
Consider a $t$-tuple $(r_{1},r_{2},\ldots,r_{t})$ for the representation
(\ref{eq:MaxChainRepresentation}). By Theorem \ref{thrm:PhiINR},
since $\widetilde{C}\in\nofull_{i}(n-t)$, $r_{t}$ for $\phi_{i,n-t}^{r_{t}}$
must only satisfy $0\leq r_{t}\leq n-t+i$. We obtain $\phi_{i,n-t}^{r_{t}}(\widetilde{C})\in\full_{i}^{r_{t}+1}(n-t+1)$.
The $(r_{t}+1)$-set in $\phi_{i,n-t}^{r_{t}}(\widetilde{C})$ is
its only plus-full-set. By definition, $r_{t-1}$ for $\phi_{i,n-(t-1)}^{r_{t-1}}$
must only satisfy $0\leq r_{t-1}\leq r_{t}$. Continuing in this manner,
we find that $(r_{1},r_{2},\ldots,r_{t})$ must only satisfy $0\leq r_{1}\leq r_{2}\leq\cdots\leq r_{t}\leq n-t+i$.
The standard trick is to make the substitution $r_{k}=u_{k}-k$, obtaining
$0<u_{1}<u_{2}<\cdots<u_{t}\leq n+i$. The $t$-tuples $(u_{1},u_{2},\ldots,u_{t})$
which satisfy this are the $t$-element subsets of $[n+i]$. Moreover,
by \ref{prop:PFSCondition}(\ref{enu:PFSCondition2}), $\{u_{1},u_{2},\ldots,u_{t}\}=\{r_{1}+1,r_{2}+2,\ldots,r_{t}+t\}$
is the set $\{j\mid C$ has a $j^{+}$-full-set$\}$ for $C=\left(\phi_{i,n-1}^{r_{1}}\circ\phi_{i,n-2}^{r_{2}}\circ\cdots\circ\phi_{i,n-t}^{r_{t}}\right)(\widetilde{C})$.\end{proof}
\begin{cor}
\label{cor:EqualRepresentationOfPFS}There is equal representation
in $\chain_{i}(n)$ over equal size subsets of $[n+i]$ in terms of
specific plus-full-sets found in maximal chains: For each $t$-element
subset $U\subseteq[n+i]$, such that $0\leq t\leq n-1$, 
\begin{equation}
\#\{C\in\chain_{i}(n)\mid U=\{j\mid C\text{ has a }j^{+}\text{-full-set}\}\}=\#\nofull_{i}(n-t),\label{eq:EqualRepresExact}
\end{equation}
\begin{equation}
\#\{C\in\chain_{i}(n)\mid U\subseteq\{j\mid C\text{ has a }j^{+}\text{-full-set}\}\}=\#\chain_{i}(n-t).\label{eq:EqualRepresSubset}
\end{equation}
\end{cor}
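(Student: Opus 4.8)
The plan is to prove both equations (\ref{eq:EqualRepresExact}) and (\ref{eq:EqualRepresSubset}) by leveraging the unique-representation machinery of Corollary \ref{cor:UniqueRepresentationByPhi} together with the identification in Proposition \ref{prop:TtupleRelatesToPFS} of the plus-full-set indices with the shifted tuple $\{r_1+1,r_2+2,\ldots,r_t+t\}$. First I would observe that the content of the corollary is that the count in (\ref{eq:EqualRepresExact}) depends on $U$ \emph{only through its cardinality} $t=\#U$, and equals $\#\nofull_i(n-t)$; equation (\ref{eq:EqualRepresSubset}) is then a consequence obtained by summing (\ref{eq:EqualRepresExact}) over all supersets of $U$ inside $[n+i]$.

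For (\ref{eq:EqualRepresExact}), I would fix a $t$-element subset $U=\{u_1<u_2<\cdots<u_t\}\subseteq[n+i]$ and set up a bijection between the set on the left-hand side and $\nofull_i(n-t)$. Given $C\in\chain_i(n)$ whose set of plus-full-set indices is exactly $U$, its number of plus-full-sets is $t$, so Corollary \ref{cor:UniqueRepresentationByPhi} supplies a unique $\widetilde{C}\in\nofull_i(n-t)$ and a unique tuple $(r_1,\ldots,r_t)$ with $0\le r_1\le\cdots\le r_t\le n-t+i$ such that $C=(\phi_{i,n-1}^{r_1}\circ\cdots\circ\phi_{i,n-t}^{r_t})(\widetilde{C})$. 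By Proposition \ref{prop:TtupleRelatesToPFS}, the tuple is recovered from $U$ via $r_k=u_k-k$, so the tuple is \emph{forced} by $U$; hence the map $C\mapsto\widetilde{C}$ is well-defined, and it is injective because $C$ is determined by the pair $(\widetilde{C},(r_1,\ldots,r_t))$. Conversely, starting from any $\widetilde{C}\in\nofull_i(n-t)$ and applying the composition with the forced tuple $r_k=u_k-k$ produces a $C\in\chain_i(n)$ whose plus-full-set index set is exactly $U$, again by Proposition \ref{prop:TtupleRelatesToPFS}; one checks $0\le r_1\le\cdots\le r_t\le n-t+i$ is equivalent to $U\subseteq[n+i]$ having $t$ elements. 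This gives the two-sided inverse and establishes (\ref{eq:EqualRepresExact}).

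For (\ref{eq:EqualRepresSubset}), I would partition the left-hand collection according to the \emph{exact} set $V$ of plus-full-set indices of $C$, noting that $U\subseteq V\subseteq[n+i]$ ranges over all supersets of $U$. Writing $s=\#V$, equation (\ref{eq:EqualRepresExact}) gives
\[
\#\{C\in\chain_i(n)\mid U\subseteq\{j\mid C\text{ has a }j^{+}\text{-full-set}\}\}=\sum_{V\supseteq U}\#\nofull_i(n-\#V)=\sum_{s=t}^{n-1}\binom{n+i-t}{s-t}\,\#\nofull_i(n-s),
\]
where $\binom{n+i-t}{s-t}$ counts the $s$-element supersets $V$ of the fixed $t$-set $U$ inside $[n+i]$. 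The final identity (\ref{eq:EqualRepresSubset}) then amounts to showing this sum equals $\#\chain_i(n-t)$; but this is exactly the decomposition (\ref{eq:Ci(n)IsDisjUnion}) of $\chain_i(m)$ for $m=n-t$, re-indexed, since by (\ref{eq:EqualRepresExact}) applied in $\chain_i(m)$ the number of maximal chains with exactly $k$ plus-full-sets is $\binom{m+i}{k}\#\nofull_i(m-k)$, and summing over $k=s-t$ recovers $\#\chain_i(m)$.

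The main obstacle I anticipate is bookkeeping the index shift correctly: one must verify that ``$U\subseteq\{j\mid C\text{ has a }j^+\text{-full-set}\}$'' does not impose any constraint on the \emph{other} plus-full-sets of $C$, so that the fibers genuinely partition according to the exact index set $V$, and that the range of $s$ (equivalently, the allowed number of plus-full-sets, bounded by $n-1$ per Remark \ref{rem:NumPFSRange0ToNminusOne}) lines up so that the binomial sum telescopes to $\#\chain_i(n-t)$ rather than to some truncation. Handling the edge case $t=0$ (where $U=\emptyset$ and both sides should reduce to $\#\chain_i(n)$) and confirming the forced-tuple condition $r_k=u_k-k\ge 0$ precisely when $U\subseteq[n+i]$ are the points I would check most carefully.
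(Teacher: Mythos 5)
Your proof of (\ref{eq:EqualRepresExact}) coincides with the paper's: both use Corollary \ref{cor:UniqueRepresentationByPhi} and Proposition \ref{prop:TtupleRelatesToPFS} to see that the tuple in the unique representation is forced to be $r_{k}=u_{k}-k$, giving the bijection with $\nofull_{i}(n-t)$. For (\ref{eq:EqualRepresSubset}) you take a genuinely different route. The paper stays bijective: it notes that by (\ref{eq:EqualRepresExact}) the count depends only on $t$, reduces to $U=[t]$, and observes that $[t]\subseteq\{r_{1}+1,r_{2}+2,\ldots,r_{s}+s\}$ with $0\leq r_{1}\leq\cdots\leq r_{s}$ forces $r_{1}=\cdots=r_{t}=0$, so that $C\mapsto\widetilde{C}$, where $C=(\phi_{i,n-1}^{0}\circ\cdots\circ\phi_{i,n-t}^{0})(\widetilde{C})$, is an explicit bijection between the set in (\ref{eq:EqualRepresSubset}) and $\chain_{i}(n-t)$, with no summation at all. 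You instead partition that set by the exact index set $V\supseteq U$, apply (\ref{eq:EqualRepresExact}) to each fiber, and identify $\sum_{s=t}^{n-1}\binom{n+i-t}{s-t}\#\nofull_{i}(n-s)=\sum_{k=0}^{m-1}\binom{m+i}{k}\#\nofull_{i}(m-k)$, with $m=n-t$, as the decomposition of $\chain_{i}(m)$ by number of plus-full-sets (equation (\ref{eq:=000023Ci(n)FirstExpr}) at rank $m$). This is correct: the reindexing works since $n+i-t=m+i$ and $n-s=m-k$, Remark \ref{rem:NumPFSRange0ToNminusOne} caps $s$ at $n-1$ (hence $k$ at $m-1$), and there is no circularity because you establish (\ref{eq:EqualRepresExact}) for all parameters before invoking it at rank $m<n$. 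What the paper's route buys is an explicit bijection onto $\chain_{i}(n-t)$, which is structurally more informative and avoids any identity manipulation; what yours buys is uniformity, handling arbitrary $U$ directly without the symmetry reduction to $U=[t]$, at the price of invoking (\ref{eq:EqualRepresExact}) at all lower ranks and a binomial-sum identification.
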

\begin{proof}
The case $t=0$ is trivial so assume $1\leq t\leq n-1$. 

For equation (\ref{eq:EqualRepresExact}), let $S_{1}=\{C\in\chain_{i}(n)\mid U=\{j\mid C\text{ has a }j^{+}\text{-full-set}\}\}$.
Suppose $U=\{u_{1},u_{2},\ldots,u_{t}\}$, where $0<u_{1}<u_{2}<\cdots<u_{t}\leq n+i$.
By Corollary \ref{cor:UniqueRepresentationByPhi} and Proposition
\ref{prop:TtupleRelatesToPFS}, each $C\in S_{1}$ has the representation
\[
\left(\phi_{i,n-1}^{u_{1}-1}\circ\phi_{i,n-2}^{u_{2}-2}\circ\cdots\circ\phi_{i,n-t}^{u_{t}-t}\right)(\widetilde{C}),
\]
for a unique $\widetilde{C}\in\nofull_{i}(n-t)$, and this is an element
of $S_{1}$ for every $\widetilde{C}\in\nofull_{i}(n-t)$. 

For equation (\ref{eq:EqualRepresSubset}), let $S_{2}=\{C\in\chain_{i}(n)\mid U\subseteq\{j\mid C\text{ has a }j^{+}\text{-full-set}\}\}$.
Because of equation (\ref{eq:EqualRepresExact}), $\#S_{2}$ depends
only on $\#U=t$. It suffices to consider $U=[t]$. Suppose $C\in S_{2}$
has exactly $s\geq t$ plus-full-sets. Again by Corollary \ref{cor:UniqueRepresentationByPhi}
and Proposition \ref{prop:TtupleRelatesToPFS}, there exists a unique
$\widetilde{C}_{s}\in\nofull_{i}(n-s)$ and a unique $s$-tuple $(r_{1},r_{2},\ldots,r_{s})$,
such that 
\[
C=\left(\phi_{i,n-1}^{r_{1}}\circ\phi_{i,n-2}^{r_{2}}\circ\cdots\circ\phi_{i,n-s}^{r_{s}}\right)(\widetilde{C}_{s}),
\]
where $0\leq r_{1}\leq r_{2}\leq\cdots\leq r_{s}\leq n-s+i$ and $[t]\subseteq\{r_{1}+1,r_{2}+2,\ldots,r_{s}+s\}$.
For each $k\in[t]$, we must have $r_{k}=0$. Thus, each $C\in S_{2}$
has the representation
\[
\left(\phi_{i,n-1}^{0}\circ\phi_{i,n-2}^{0}\circ\cdots\circ\phi_{i,n-t}^{0}\right)(\widetilde{C}),
\]
for a unique $\widetilde{C}\in\chain_{i}(n-t)$, and this is an element
of $S_{2}$ for every $\widetilde{C}\in\chain_{i}(n-t)$.
\end{proof}
An expression for $\#\chain_{i}(n)$ is acquired from equation (\ref{eq:EqualRepresExact}).
(Note, the expression may be obtained directly from Proposition \ref{prop:TtupleRelatesToPFS}.)
\begin{equation}
\#\chain_{i}(n)=\sum_{t=0}^{n-1}\binom{n+i}{t}\#\nofull_{i}(n-t)=\sum_{t=1}^{n}\binom{n+i}{t+i}\#\nofull_{i}(t).\label{eq:=000023Ci(n)FirstExpr}
\end{equation}
The second expression follows from the first by reindexing and is
refined in Theorem \ref{thm:MainThrm}.

An expression for $\#\nofull_{i}(n)$ is obtained from equations (\ref{eq:EqualRepresExact}),
(\ref{eq:EqualRepresSubset}) and the principle of inclusion and exclusion.
\begin{equation}
\#\nofull_{i}(n)=\sum_{t=0}^{n-1}(-1)^{t}\binom{n+i}{t}\#\chain_{i}(n-t)=\sum_{t=1}^{n}(-1)^{n-t}\binom{n+i}{t+i}\#\chain_{i}(t).\label{eq:=000023Ni(n)FirstExpr}
\end{equation}
The second expression follows from the first by reindexing.

In Theorem \ref{thrm:GOrE2IPlus4}, the surprising fact is that for
all $n\geq2i+4$, every maximal chain in $\chain_{i}(n)$ has a plus-full-set.
For this condition on $\chain_{i}(n)$, we can do no better (the following
lemma). These facts enable us to reach our main objective in Theorem
\ref{thm:MainThrm}.
\begin{lem}
\label{lem:2iPlus3}For each $i\geq-1$, $\#\nofull_{i}(2i+3)>0$.\end{lem}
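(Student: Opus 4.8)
The plan is to prove this existence statement by exhibiting, for each $i\geq-1$, a single explicit maximal chain in $\chain_{i}(2i+3)$ that has no plus-full-set. Since $\psi$ identifies $\chain_{i}(2i+3)$ with the $\psi$-tableaux of shape $\delta_{2i+2}$ and length $3i+3$, this amounts to writing down one such tableau $T$ and checking it carries no plus-full-set. Write $n=2i+3$, so the outer diagonal of $T$ is $\{(k,n-k):k\in[n-1]\}$. The conceptual heart is that the plus-full-set condition of Definition \ref{def:FS} can be defeated by controlling \emph{only} the outer-diagonal labels: it suffices that (a) the outer-diagonal labels strictly increase from the top, $T(1,n-1)<T(2,n-2)<\cdots<T(n-1,1)$, and (b) the label-set containing the corner box $(n-1,1)$ does not meet the first row. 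Indeed, every full-set ends at some outer-diagonal box $(k,n-k)$. If $k\leq n-2$, its southwest neighbour $(k+1,n-k-1)$ is again on the outer diagonal (as $(k+1)+(n-k-1)=n$), and (a) gives $T(k+1,n-k-1)>T(k,n-k)$, so the inequality demanded in Definition \ref{def:FS} fails and the full-set is not a plus-full-set. The remaining case $k=n-1$ is ruled out by (b): a full-set must begin in the first row, yet the $T(n-1,1)$-set does not. Hence (a) and (b) together force $T$ to have no plus-full-set, and this step needs no information about strip shapes at all.

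It then remains to construct a $\psi$-tableau of shape $\delta_{n-1}$ and length exactly $3i+3$ satisfying (a) and (b). A counting remark guides the construction: by \ref{prop:BasicProps}(\ref{enu:BasicProps5}) the $n-1=2i+2$ outer-diagonal boxes carry distinct labels, so of the $3i+3$ labels exactly $i+1$ are confined to the interior of $T$. Thus the chain I seek is the (unique) shortest chain, all of whose $n-1$ labels reach the outer diagonal, augmented by precisely $i+1$ interior steps. I would build $T$ recursively in $i$: beginning from the empty tableau for $i=-1$ and from the chain of Figure \ref{fig:4-1} for $i=0$, I would pass from a tableau of shape $\delta_{2i}$ to one of shape $\delta_{2i+2}$ by adjoining the two new outer rows and columns together with a single new interior strip, assigning the two new outer-diagonal boxes the largest labels so that (a) is preserved, and placing the interior strip so that the label-set of the bottom corner is pushed off the first row, preserving (b). Each recursive step enlarges the staircase by $2$ and the length by $3$ (two diagonal labels and one interior label), producing length $3i+3$ as required.

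Two verifications would close the argument. First, $T$ must be a genuine $\psi$-tableau: by Proposition \ref{prop:PsiTabChar}(\ref{enu:PsiTabChar1}) this means checking, for each label $k$, that the $k$-set equals the $B_{k}$-strip in $T^{(k)}$, which I would carry out step-by-step along the recursion, using Theorem \ref{thrm:Alpha}(\ref{enu:Alpha5}) and the monotonicity of prime-path heights (Proposition \ref{prop:DyckPathsDecreasingHeights}) to transport the strip condition through the two newly adjoined outer rows. Second, one checks that the recursion realizes (a) and (b) at every stage, which is immediate from the placement rules above.

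The main obstacle will be that first verification. I must choose the two new outer-diagonal boxes and, crucially, the single interior strip so that every $k$-set remains an \emph{honest} $B_{k}$-strip (its prime-path height must match the strip exactly) while simultaneously hitting length exactly $3i+3$, neither shorter, which would occur if a would-be interior strip merged with a diagonal strip, nor longer, and keeping the outer-diagonal labels strictly increasing. Balancing the strip/height constraints against the exact length and the diagonal monotonicity is the delicate point; once a placement surviving all three demands is fixed, the remaining checks reduce to routine applications of Proposition \ref{prop:PsiTabChar} and Theorem \ref{thrm:Alpha}, and the reduction in the first paragraph then delivers the conclusion $\#\nofull_{i}(2i+3)>0$.
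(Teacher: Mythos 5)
Your first-paragraph reduction is logically correct: conditions (a) and (b) together do rule out plus-full-sets. The fatal problem is that condition (a) is unsatisfiable in exactly the regime this lemma concerns. In any tableau of shape $\delta_{n-1}$ (rows strictly increase), the first row contains $n-1$ strictly increasing labels, so $T(1,n-1)\geq n-1$; if moreover the outer-diagonal labels strictly increase downward as in (a), then $T(n-1,1)\geq T(1,n-1)+(n-2)\geq 2n-3$. Since every label is at most the length $n+i$, this forces $2n-3\leq n+i$, i.e.\ $n\leq i+3$. With $n=2i+3$ this reads $2i+3\leq i+3$, i.e.\ $i\leq 0$. So for every $i\geq 1$ there is no tableau whatsoever --- let alone a $\psi$-tableau --- of shape $\delta_{2i+2}$ and length $3i+3$ satisfying (a). Your recursion, whose stated invariant is precisely (a) (``keeping the outer-diagonal labels strictly increasing''), therefore cannot get past $i=0$, and no choice of the ``single new interior strip'' can rescue it; the step you flagged as ``the delicate point'' is not delicate but impossible. (Your plan also never actually exhibits a tableau even in the cases where it could exist, but that is secondary to the impossibility.)

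The repair, and what the paper actually does, is to demand the southwest-neighbor inequality of Definition \ref{def:FS} only where it is needed: at the end-boxes of sets that really are full-sets, not along the whole diagonal. The paper's explicit chain $C$ of shape $\delta_{n-1}$, $n=2i+3$, labels column $k$ with $k$ throughout, except that the bottom box $(n-(2k+1),2k+1)$ of each odd column is relabeled $n+i-k$, $0\leq k\leq i$. The diagonal labels then \emph{alternate}: small labels $2i+2,2i,\ldots,2$ at odd rows and large labels $n,n+1,\ldots,n+i$ at even rows. The full-sets are exactly the even-column sets, each ending at an odd-row diagonal box whose southwest neighbor carries one of the large labels, so none is a plus-full-set; the large labels themselves are single boxes that never meet row 1, hence are not full-sets at all (this, incidentally, is your condition (b), which survives). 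The lesson is that your budget of $3i+3$ labels cannot pay for monotonicity along all $2i+2$ diagonal boxes, but it can pay for ``larger southwest neighbor'' at the $i+1$ positions where full-sets actually end.
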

\begin{proof}
For $i=-1$, $\chain_{-1}(1)$ consists of only the null diagram,
so assume $i\geq0$. A qualifying maximal chain in $\tam_{n}$, $n=2i+3$,
will have the shape $\delta_{n-1}=(2i+2,\ldots,1)$ and length $n+i=3i+3$.
Let $C$ be a Young diagram of the desired shape. 

For all $0\leq k\leq i$, let $C(n-(2k+1),2k+1)=n+i-k$. Each $r$-set,
for $r\in[n,n+i]$, is labeled here and consists of a single box in
the outer diagonal. Specifically, $C(n-1,1)=n+i$ and $C(2,n-2)=n$,
and every other box between $(n-1,1)$ and $(2,n-2)$ gets a label.

For all $k\in[n-1]$, label the remaining unlabeled boxes in column
$k$ with $k$.

The resulting diagram is a $\psi$-tableau of the desired shape and
length (see Figure \ref{fig:5-1}). Moreover, the full-sets of $C$
end in the boxes $(n-(2k+2),2k+2)$, $0\leq k\leq i$, such that $C(n-(2k+1),2k+1)>C(n-(2k+2),2k+2)$,
so that each full-set is not a plus-full-set. 
\end{proof}
\begin{figure}[h]
\begin{centering}
\includegraphics[scale=0.5]{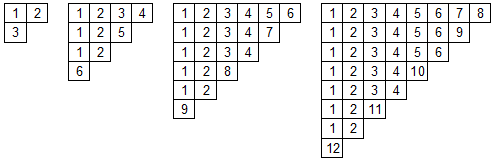}
\par\end{centering}

\caption{\label{fig:5-1}Examples of maximal chains in $\nofull_{i}(2i+3)$,
$i\in\{0,1,2,3\}$}
\end{figure}

\begin{thm}
\label{thrm:GOrE2IPlus4}For each $i\geq-1$ and for all $n\geq2i+4$,
\begin{enumerate}
\item \label{enu:GreaterOrEqual2iPlus41}$\#\chain_{i}(n)>0$,
\item \label{enu:GreaterOrEqual2iPlus42}every element of $\chain_{i}(n)$
has a plus-full-set, i.e., $\#\nofull_{i}(n)=0$,
\item \label{enu:GreaterOrEqual2iPlus43}$\chain_{i}(n)=\biguplus_{j\in[3i+4]}\full_{i}^{j}(n)$,
where each $\full_{i}^{j}(n)$ is nonempty.
\end{enumerate}
\end{thm}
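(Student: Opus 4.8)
The plan is to treat part (\ref{enu:GreaterOrEqual2iPlus42}) as the crux and deduce parts (\ref{enu:GreaterOrEqual2iPlus41}) and (\ref{enu:GreaterOrEqual2iPlus43}) from it. Throughout, I fix $C\in\chain_i(n)$, write $a_k=C(k,n-k)$ for the labels on the outer diagonal ($k\in[n-1]$), and let $R=\{C(1,1),\dots,C(1,n-1)\}$ be the set of labels in the first row. By Proposition \ref{prop:BasicProps}(\ref{enu:BasicProps5}) the $a_k$ are distinct and $(k,n-k)$ is the end-box of the $a_k$-set; hence the $a_k$-set always ends in the outer diagonal, and it is a full-set exactly when it begins in the first row, i.e. exactly when $a_k\in R$. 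Writing $F=\{k:\ a_k\in R\}$ for the set of full-set positions, the intersection $R\cap\{a_1,\dots,a_{n-1}\}$ has size $\#F$; since $R$ and the outer diagonal each contribute $n-1$ distinct labels inside $[n+i]$, inclusion--exclusion gives $\#F\ge 2(n-1)-(n+i)=n-2-i$.

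Now suppose $C\in\nofull_i(n)$. For each $k\in F$ the $a_k$-set is a full-set that is \emph{not} a plus-full-set, so by Definition \ref{def:FS} we have $k\neq n-1$ and $a_k<a_{k+1}$; in particular $F\subseteq[n-2]$ and every position of $F$ is an ascent of the word $a_1,\dots,a_{n-1}$. The key step is that $F$ contains no two consecutive integers: if $k,k+1\in F$ then the $a_{k+1}$-set is a full-set with end-box $(k+1,n-k-1)$, so by Proposition \ref{prop:BasicProps}(\ref{enu:BasicProps1}) it equals the last boxes of rows $1,\dots,k+1$ in $C^{(a_{k+1})}$; its box in row $k$ is therefore the last box of row $k$ there, and since $a_k<a_{k+1}$ that box is $(k,n-k)$, carrying the label $a_k\neq a_{k+1}$ --- a contradiction. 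Thus $F$ is a subset of $[n-2]$ with no two consecutive elements, whence $\#F\le\lceil(n-2)/2\rceil\le(n-1)/2$. Combined with $\#F\ge n-2-i$ this gives $n\le 2i+3$. Contrapositively $\nofull_i(n)=\emptyset$ for all $n\ge 2i+4$, which is part (\ref{enu:GreaterOrEqual2iPlus42}). I expect this ``no two consecutive full-sets'' lemma to be the main obstacle, since it is the only place where the geometry of the strips, rather than mere bookkeeping, enters.

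For part (\ref{enu:GreaterOrEqual2iPlus41}), the map $\phi_{i,n-1}^{0}$ of Theorem \ref{thrm:PhiINR} is a bijection from all of $\chain_i(n-1)$ onto $\full_i^{1}(n)$, so $\#\chain_i(n)\ge\#\chain_i(n-1)$; iterating down to $\#\chain_i(2i+3)\ge\#\nofull_i(2i+3)>0$ (Lemma \ref{lem:2iPlus3}) yields $\#\chain_i(n)>0$. For part (\ref{enu:GreaterOrEqual2iPlus43}), part (\ref{enu:GreaterOrEqual2iPlus42}) together with (\ref{eq:Ci(n)IsDisjUnion}) already gives $\chain_i(n)=\biguplus_{j\in[n+i]}\full_i^{j}(n)$, so it remains to identify the nonempty blocks as those with $j\le 3i+4$. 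Given $C$ with $t\ge1$ plus-full-sets, Corollary \ref{cor:UniqueRepresentationByPhi} and Proposition \ref{prop:TtupleRelatesToPFS} present $C$ via a unique $\widetilde{C}\in\nofull_i(n-t)$ and a tuple $0\le r_1\le\cdots\le r_t\le n-t+i$ whose least plus-full-set index is $r_1+1$; since $\nofull_i(n-t)\neq\emptyset$ forces $n-t\le 2i+3$ by part (\ref{enu:GreaterOrEqual2iPlus42}), we get $r_1+1\le (n-t+i)+1\le 3i+4$, so every $C$ lies in some $\full_i^{j}(n)$ with $j\le 3i+4$. Conversely, fixing $j\in[3i+4]$, set $t=n-(2i+3)\ge1$, choose $\widetilde{C}\in\nofull_i(2i+3)$ by Lemma \ref{lem:2iPlus3}, and take $r_1=\cdots=r_t=j-1$ (admissible precisely because $1\le j\le 3i+4$); by Proposition \ref{prop:TtupleRelatesToPFS} the element $(\phi_{i,n-1}^{j-1}\circ\cdots\circ\phi_{i,2i+3}^{j-1})(\widetilde{C})$ has plus-full-set indices $\{j,j+1,\dots,j+t-1\}$ and hence lies in $\full_i^{j}(n)$, so each block is nonempty.
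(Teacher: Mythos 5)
Your proposal is correct. On the crux, part (\ref{enu:GreaterOrEqual2iPlus42}), you and the paper run the same engine: both count full-sets by intersecting the $n-1$ distinct first-row labels with the $n-1$ distinct outer-diagonal labels inside $[n+i]$ to get at least $n-2-i$ of them, and both rest on the geometric fact that two full-sets ending in adjacent columns force a plus-full-set. The difference is packaging: the paper argues by cases (either two full-sets end in adjacent columns, or no two do, the latter forcing $n=2i+4$ and a full-set ending at $(n-1,1)$, hence a plus-full-set either way), whereas you argue contrapositively, observe that the absence of plus-full-sets confines $F$ to $[n-2]$ with no two consecutive elements, and finish with the pigeonhole bound $\#F\le\lceil(n-2)/2\rceil$, which lets you skip the paper's final positional analysis. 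A side benefit of your write-up is that your ``no two consecutive elements of $F$'' lemma supplies the justification, via Proposition \ref{prop:BasicProps}(\ref{enu:BasicProps1}), for the inequality $C(k+1,n-k-1)<C(k,n-k)$, which the paper asserts without proof. The genuine divergence is part (\ref{enu:GreaterOrEqual2iPlus41}): the paper invokes the external fact that $\tam_{n}$ has maximal chains of every length from $n-1$ to $\binom{n}{2}$ and merely checks $n-1\le n+i\le\binom{n}{2}$, while you derive positivity internally, from the bijection $\phi_{i,m}^{0}\colon\chain_{i}(m)\to\full_{i}^{1}(m+1)$ giving $\#\chain_{i}(m+1)\ge\#\chain_{i}(m)$, seeded by Lemma \ref{lem:2iPlus3}; your route is self-contained in the paper's own machinery and avoids leaning on a citation that, strictly speaking, only supplies the extreme lengths. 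For part (\ref{enu:GreaterOrEqual2iPlus43}) the two proofs agree in substance on why every chain lands in a block with $j\le 3i+4$; for nonemptiness, your one-shot construction with the constant tuple $(j-1,\ldots,j-1)$ applied to an element of $\nofull_{i}(2i+3)$ replaces the paper's induction on $n$, a mild streamlining.
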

\begin{proof}
(\ref{enu:GreaterOrEqual2iPlus41}). Since maximal chains in $\tam_{n}$
range in length from $n-1$ to $\binom{n}{2}$, it suffices to show
that $n-1\leq n+i\leq\binom{n}{2}$. Since $i\geq-1$, $n-1\leq n+i$.
Since $n\geq2i+4\geq2$, 
\[
n+i\leq\binom{n}{2}\Longleftrightarrow0\leq n^{2}-3n-2i,
\]
and
\[
n^{2}-3n-2i\geq n^{2}-3n+4-n=\left(n-2\right)^{2}\geq0.
\]

(\ref{enu:GreaterOrEqual2iPlus42}). Let $C\in\chain_{i}(n)$. Since
$n\geq2$, $C$ is not the null diagram. Suppose $n=2i+4+l$ for some
$l\geq0$. The length of $C$ is $n+i=3i+4+l$. There are no repeat
labels in the first row of $n-1=2i+3+l$ boxes. Likewise, there are
no repeat labels in the $n-1$ boxes in the outer diagonal. Let $x$
be the number of full-sets in $C$ and note that $(1,n-1)$ constitutes
a full set. The combined number of distinct labels in the first row
and outer diagonal is $x+2(n-1-x)\leq n+i$, thus 
\[
x\geq n-2-i=i+2+l.
\]
Suppose two full-sets end in boxes in adjacent columns, say in $(k,n-k)$
and $(k+1,n-k-1)$. Then $C(k+1,n-k-1)<C(k,n-k)$, and thus $C$ has
an $r^{+}$-full-set for $r=C(k,n-k)$. On the other hand suppose
no two full-sets end in adjacent columns. Then we require at least
$(i+2+l)+(i+1+l)=2i+3+2l$ boxes in the outer diagonal. Thus, $l=0$,
$n=2i+4$, and of the $2i+3$ boxes in the outer diagonal, full-sets
end in $i+2$ of them. But then one must end in $(n-1,1)$, resulting
in an $r^{+}$-full-set for $r=C(n-1,1)$.

(\ref{enu:GreaterOrEqual2iPlus43}). Let $n=2i+4+l$ for some $l\geq0$.
Let $C\in\chain_{i}(n)$ and suppose its number of plus-full-sets
is $t$. By Corollary \ref{cor:UniqueRepresentationByPhi}, there
exists a unique $\widetilde{C}\in\nofull_{i}(n-t)$ and a unique $t$-tuple
$(r_{1},r_{2},\ldots,r_{t})$, such that $C=\left(\phi_{i,n-1}^{r_{1}}\circ\phi_{i,n-2}^{r_{2}}\circ\cdots\circ\phi_{i,n-t}^{r_{t}}\right)(\widetilde{C})$.
By (\ref{enu:GreaterOrEqual2iPlus42}), $n-t\leq2i+3$, thus $t\geq l+1$.
Since the length of $C$ is $n+i=3i+4+l$, there exists a $j^{+}$-full-set
in $C$ satisfying $j\leq3i+4$. Thus, $C\in\full_{i}^{r}(n)$ for
some $r\leq j\leq3i+4$. 

We show by induction on $n$, that for each $j\in[3i+4]$, $\full_{i}^{j}(n)$
is nonempty. For the base case $n=2i+4$, let $C\in\nofull_{i}(2i+3)$.
Then for each $r$ (as in Theorem \ref{thrm:PhiINR}), satisfying
$0\leq r\leq3i+3$, $\phi_{i,2i+3}^{r}(C)\in\full_{i}^{r+1}(2i+4)$.
Now suppose the statement is true for $n$. By the inductive hypothesis,
there exists $C\in\full_{i}^{3i+4}(n)$. We may choose any value of
$r$ (as in Theorem \ref{thrm:PhiINR}), satisfying $0\leq r\leq3i+3$,
to obtain $\phi_{i,n}^{r}(C)\in\full_{i}^{r+1}(n+1)$.\end{proof}
\begin{thm}
\label{thm:MainThrm}For each $i\geq-1$ and for all $n\geq1$, the
number of maximal chains in $\tam_{n}$ of length $n+i$ is
\begin{equation}
\#\chain_{i}(n)=\sum_{t=1}^{2i+3}\binom{n+i}{t+i}\#\nofull_{i}(t),\label{eq:NumberOfCinFinalFormula}
\end{equation}
a polynomial in $n$ of degree $3i+3$. The initial values of $\#\nofull_{i}(n)$,
$n\in[2i+3]$, are 
\begin{equation}
\#\nofull_{i}(n)=\sum_{t=1}^{n}(-1)^{n-t}\binom{n+i}{t+i}\#\chain_{i}(t).\label{eq:NumberOfNinFinalFormula}
\end{equation}
\end{thm}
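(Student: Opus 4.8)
The plan is to read off both displayed identities from the two summation formulas already in hand, equations (\ref{eq:=000023Ci(n)FirstExpr}) and (\ref{eq:=000023Ni(n)FirstExpr}), and then to control the support of the first sum using the vanishing result of Theorem \ref{thrm:GOrE2IPlus4} together with the nonvanishing result of Lemma \ref{lem:2iPlus3}. First I would recall the second form of (\ref{eq:=000023Ci(n)FirstExpr}), namely $\#\chain_{i}(n)=\sum_{t=1}^{n}\binom{n+i}{t+i}\#\nofull_{i}(t)$, which holds for every $n\geq1$. The only task toward (\ref{eq:NumberOfCinFinalFormula}) is then to show that replacing the variable upper limit $n$ by the fixed bound $2i+3$ leaves the sum unchanged.

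I would handle this by splitting into two regimes. If $n\leq 2i+3$, then every index $t$ with $n<t\leq 2i+3$ satisfies $t+i>n+i$, so $\binom{n+i}{t+i}=0$ and the added terms are all zero; hence the two sums agree. If instead $n\geq 2i+4$, then Theorem \ref{thrm:GOrE2IPlus4}(\ref{enu:GreaterOrEqual2iPlus42}) gives $\#\nofull_{i}(t)=0$ for every $t\geq 2i+4$, so the terms with $t>2i+3$ vanish and again the two sums coincide. This establishes (\ref{eq:NumberOfCinFinalFormula}) in all cases.

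For the degree claim I would treat the right side of (\ref{eq:NumberOfCinFinalFormula}) as a function of $n$ with the finitely many constants $\#\nofull_{i}(t)$ held fixed. Each binomial equals $\binom{n+i}{t+i}=\frac{(n+i)(n+i-1)\cdots(n+i-t-i+1)}{(t+i)!}$, a polynomial in $n$ of degree $t+i$ with positive leading coefficient $1/(t+i)!$, so the whole expression is a polynomial in $n$. The summand of largest possible degree is the one with $t=2i+3$, contributing degree $(2i+3)+i=3i+3$; since every other summand has strictly smaller degree $t+i<3i+3$, no cancellation can occur at the top, and the leading coefficient equals $\#\nofull_{i}(2i+3)/(3i+3)!$. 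By Lemma \ref{lem:2iPlus3} this quantity is strictly positive, so the polynomial has degree exactly $3i+3$.

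Finally, the initial-value formula (\ref{eq:NumberOfNinFinalFormula}) is nothing other than the second expression in (\ref{eq:=000023Ni(n)FirstExpr}), which was obtained for all $n$ by inclusion–exclusion; restricting to $n\in[2i+3]$ simply records the constants $\#\nofull_{i}(1),\ldots,\#\nofull_{i}(2i+3)$ that feed (\ref{eq:NumberOfCinFinalFormula}), so nothing further is required there. The genuinely hard content — that $\#\nofull_{i}(n)$ eventually vanishes and first does so at $n=2i+4$ — already lives in Theorem \ref{thrm:GOrE2IPlus4} and Lemma \ref{lem:2iPlus3}; the only delicate point remaining in this proof is the two-case truncation, and in particular the use of $\#\nofull_{i}(2i+3)>0$ to rule out any drop in the degree of the leading term.
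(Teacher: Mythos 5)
Your proposal is correct and follows essentially the same route as the paper: both truncate equation (\ref{eq:=000023Ci(n)FirstExpr}) by the identical two-case argument (vanishing binomial coefficients when $1\leq n\leq 2i+3$, vanishing $\#\nofull_{i}(t)$ via Theorem \ref{thrm:GOrE2IPlus4} when $n\geq 2i+4$), identify (\ref{eq:NumberOfNinFinalFormula}) with (\ref{eq:=000023Ni(n)FirstExpr}), and extract the leading term $\frac{n^{3i+3}}{(3i+3)!}\#\nofull_{i}(2i+3)$, nonzero by Lemma \ref{lem:2iPlus3}. Your treatment of the degree claim is slightly more explicit about why lower-order summands cannot cancel the top term, but the argument is the same.
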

\begin{proof}
If $n\geq2i+4$, then for all $t$ satisfying $2i+4\leq t\leq n$,
$\#\nofull_{i}(t)=0$, thus equation (\ref{eq:=000023Ci(n)FirstExpr})
reduces to equation (\ref{eq:NumberOfCinFinalFormula}). On the other
hand, suppose $1\leq n\leq2i+3$. Then for all $t$ satisfying $n<t\leq2i+3$,
$\binom{n+i}{t+i}=0$, thus equation (\ref{eq:NumberOfCinFinalFormula})
reduces to equation (\ref{eq:=000023Ci(n)FirstExpr}). Equation (\ref{eq:NumberOfNinFinalFormula})
is equation (\ref{eq:=000023Ni(n)FirstExpr}).

The summand for $t=2i+3$ in equation (\ref{eq:NumberOfCinFinalFormula})
is the one containing the term with the largest power of $n$, the
term being 
\[
\frac{n^{3i+3}}{\left(3i+3\right)!}\#\nofull_{i}(2i+3).
\]
By Lemma \ref{lem:2iPlus3}, this term is nonzero, thus $\#\chain_{i}(n)$
is a polynomial of degree $3i+3$.
\end{proof}
\begin{table}[h]
\begin{centering}
\begin{tabular}{|c|c|c|c|c|c|c|c|c|c|c|c|c|c|}
\hline 
{\footnotesize{}Length} & {\footnotesize{}$\tam_{1}$} & {\footnotesize{}$\tam_{2}$} & {\footnotesize{}$\tam_{3}$} & {\footnotesize{}$\tam_{4}$} & {\footnotesize{}$\tam_{5}$} & {\footnotesize{}$\tam_{6}$} & {\footnotesize{}$\tam_{7}$} & {\footnotesize{}$\tam_{8}$} & {\footnotesize{}$\tam_{9}$} & {\footnotesize{}$\tam_{10}$} & {\footnotesize{}$\tam_{11}$} & {\footnotesize{}$\tam_{12}$} & {\footnotesize{}$\tam_{13}$}\tabularnewline
\hline 
\hline 
{\footnotesize{}n - 1} & {\footnotesize{}1} &  &  &  &  &  &  &  &  &  &  &  & \tabularnewline
\hline 
{\footnotesize{}n} &  &  & {\footnotesize{}1} &  &  &  &  &  &  &  &  &  & \tabularnewline
\hline 
{\footnotesize{}n + 1} &  &  &  & {\footnotesize{}2} & {\footnotesize{}10} &  &  &  &  &  &  &  & \tabularnewline
\hline 
{\footnotesize{}n + 2} &  &  &  & {\footnotesize{}2} & {\footnotesize{}8} & {\footnotesize{}112} & {\footnotesize{}280} &  &  &  &  &  & \tabularnewline
\hline 
{\footnotesize{}n + 3} &  &  &  &  & {\footnotesize{}18} & {\footnotesize{}220} & {\footnotesize{}1,464} & {\footnotesize{}9,240} & {\footnotesize{}15,400} &  &  &  & \tabularnewline
\hline 
{\footnotesize{}n + 4} &  &  &  &  & {\footnotesize{}13} & {\footnotesize{}218} & {\footnotesize{}5,322} & {\footnotesize{}42,592} & {\footnotesize{}281,424} & {\footnotesize{}1,121,120} & {\footnotesize{}1,401,400} &  & \tabularnewline
\hline 
{\footnotesize{}n + 5} &  &  &  &  & {\footnotesize{}12} & {\footnotesize{}324} & {\footnotesize{}8,052} & {\footnotesize{}142,944} & {\footnotesize{}1,714,700} & {\footnotesize{}12,180,168} & {\footnotesize{}65,985,920} & {\footnotesize{}190,590,400} & {\footnotesize{}190,590,400}\tabularnewline
\hline 
\end{tabular}
\par\end{centering}

\caption{$\#\nofull_{i}(n)$: Number of maximal chains in $\tam_{n}$ of length
$n+i$ with no plus-full-sets }
\label{tab:5-1}
\end{table}

Table \ref{tab:5-1} is a computer based compilation of the numbers
of maximal chains in $\nofull_{i}(n)$ for $-1\leq i\leq5$. The problem
of enumerating $\chain_{i}(n)$ is reduced to computing $\#\nofull_{i}(n)$,
for $n\in[2i+3]$. For example, the number of maximal chains of length
$14$ in $\tam_{11}$ is
\begin{eqnarray}
\#\chain_{3}(11) & = & 18\binom{14}{8}+220\binom{14}{9}+1464\binom{14}{10}+9240\binom{14}{11}+15400\binom{14}{12},\nonumber \\
 & = & 18\binom{14}{6}+220\binom{14}{5}+1464\binom{14}{4}+9240\binom{14}{3}+15400\binom{14}{2}.\label{eq:NumberOfCinExample}
\end{eqnarray}
According to Theorem \ref{thrm:GOrE2IPlus4}, for all $n\geq2i+4=10$,
each maximal chain in $\chain_{3}(n)$ has a plus-full-set, so this
follows for $n=11$. The interpretation for equation (\ref{eq:NumberOfCinExample})
is that in $\chain_{3}(11)$, the numbers of maximal chains having
exactly $2$, $3$, $4$, $5$ and $6$ plus-full-sets are $15400\binom{14}{2}$,
$9240\binom{14}{3}$, $1464\binom{14}{4}$, $220\binom{14}{5}$ and
$18\binom{14}{6}$, respectively. Moreover, the subset of $\chain_{3}(11)$
of maximal chains containing exactly $j$ plus-full-sets, $2\leq j\leq6$,
has an equal number of maximal chains over all $j$-element subsets
of $[14]$ of particular sets of plus-full-sets.

Interpreting maximal chains in the Tamari lattice as $\psi$-tableaux
has proven an efficient method of study. The pursuit of the formula
for $\#\chain_{i}(n)$ led to the plus-full-set property and some
interesting combinatorics. Based on numerical evidence, we conclude
this note with a conjecture. 
\begin{conjecture}
For all $i\geq-1$,
\begin{equation}
\#\nofull_{i}(2i+3)=\prod_{j=1}^{i+1}\binom{3j-1}{2},\label{eq:Conjecture1}
\end{equation}
and for all $i\geq0$,
\begin{equation}
\#\nofull_{i}(2i+2)=\frac{i}{5}\prod_{j=1}^{i+1}\binom{3j-1}{2}.\label{eq:Conjecture2}
\end{equation}

\end{conjecture}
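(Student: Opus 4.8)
The plan is to exploit Theorem~\ref{thrm:GOrE2IPlus4}, which forces $\#\nofull_i(n)=0$ for every $n\geq 2i+4$, so that the two quantities in the conjecture are exactly the two largest nonzero seeds feeding the recursion of Theorem~\ref{thm:MainThrm}; pinning them down closes the enumeration completely. The first move is to rewrite the right-hand side of~(\ref{eq:Conjecture1}) in a more transparent form. Setting $m=i+1$ and sorting the factors by residue modulo $3$ gives
\begin{equation}
\prod_{j=1}^{m}\binom{3j-1}{2}=\frac{1}{6^{m}}\prod_{j=1}^{m}(3j-1)(3j-2)=\frac{(3m)!}{6^{m}\,m!},\label{eq:TripleReform}
\end{equation}
which is precisely the number of partitions of a $3m$-element set into $m$ unordered triples. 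Since a $\psi$-tableau counted by $\nofull_i(2i+3)$ has length $3m$, its label set is exactly $[3m]$, so (\ref{eq:TripleReform}) strongly suggests that the right route is a bijection grouping the labels $1,\dots,3m$ into $m$ canonical triples read off from the tableau, with the no-plus-full-set condition encoding the triple constraint.

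For~(\ref{eq:Conjecture1}) the main thrust is to prove the recursion
\begin{equation}
\#\nofull_i(2i+3)=\binom{3i+2}{2}\,\#\nofull_{i-1}(2i+1),\qquad i\geq 0,\label{eq:TargetRecursion}
\end{equation}
with base value $\#\nofull_{-1}(1)=1$. Combined with~(\ref{eq:TripleReform}) and the standard largest-element-first assembly of triple partitions (place the top label $3m$ with two of the remaining $3m-1$ labels in $\binom{3m-1}{2}=\binom{3i+2}{2}$ ways, then recurse), this yields~(\ref{eq:Conjecture1}) by induction on $i$. To realize~(\ref{eq:TargetRecursion}) I would build a bijection from $\nofull_i(2i+3)$ onto $\{2\text{-subsets of a }(3i+2)\text{-element set}\}\times\nofull_{i-1}(2i+1)$ by stripping the width-two outer rim of the staircase: passing from shape $\delta_{2i+2}$ to $\delta_{2i}$ removes the skew border $\delta_{2i+2}/\delta_{2i}$ and deletes the three largest labels, while the $\binom{3i+2}{2}$ factor records how the removed labels attach to the outer diagonal. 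The characterization of $\psi$-tableaux in Proposition~\ref{prop:PsiTabChar}, together with Proposition~\ref{prop:BasicProps}(\ref{enu:BasicProps5}) (distinctness of outer-diagonal labels) and Proposition~\ref{prop:BasicProps}(\ref{enu:BasicProps4c}), are the tools I would use to check that the strip map is well defined and that it transports the no-plus-full-set condition on $\delta_{2i+2}$ to the same condition on $\delta_{2i}$.

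For the companion formula~(\ref{eq:Conjecture2}) the plan is to set up the parallel recursion
\[
\#\nofull_i(2i+2)=\frac{i}{\,i-1\,}\binom{3i+2}{2}\,\#\nofull_{i-1}(2i),\qquad i\geq 2,
\]
anchored by the two values $\#\nofull_0(2)=0$ and $\#\nofull_1(4)=2$, which I would read off from Table~\ref{tab:5-1} or extract from~(\ref{eq:=000023Ni(n)FirstExpr}); dividing~(\ref{eq:Conjecture2}) by~(\ref{eq:Conjecture1}) shows this ratio is equivalent to the claimed $\tfrac{i}{5}$ proportionality once~(\ref{eq:Conjecture1}) is in hand. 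The chains counted by $\nofull_i(2i+2)$ live on the smaller staircase $\delta_{2i+1}$ and have length $3i+2$, one diagonal short of the $\nofull_i(2i+3)$ chains; accordingly I would compare the two families through the operation that adjoins one outer diagonal while remaining plus-full-set free, and organize the count by recording, for each such chain, the position of the unique outer-diagonal site at which the ``no plus-full-set'' pattern of the extremal configuration of Lemma~\ref{lem:2iPlus3} is broken, then summing over that location.

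The hard part is twofold. First, for~(\ref{eq:TargetRecursion}) the delicate point is to show that exactly $\binom{3i+2}{2}$ of the a priori many ways to reinsert the three top labels into the rim produce a $\psi$-tableau with no plus-full-set; this demands a tight analysis of how the monotone prime-path heights along a saturated chain (Proposition~\ref{prop:DyckPathsDecreasingHeights}) constrain which full-set each reinserted label may complete, and hence which reinsertions violate the plus condition. Second, and more stubbornly, the constant $\tfrac15$ in~(\ref{eq:Conjecture2}) carries no evident bijective meaning: by~(\ref{eq:TripleReform}) it traces to the single factor $\binom{5}{2}=10$ in the product, so the genuine obstacle is to isolate the local structure near the adjoined diagonal responsible for this one anomalous factor and to prove that the defect-location sum collapses to the linear factor $i$ rather than to some competing linear function of $i$.
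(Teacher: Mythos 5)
This statement is the paper's closing \emph{conjecture}: the author offers only numerical evidence (Table \ref{tab:5-1}) and no proof, so there is no argument of the paper to compare against, and the only question is whether your proposal actually settles it. It does not. What you have verified is elementary and already implicit in the paper: the product formula for $\#\nofull_{i}(2i+3)$ is algebraically \emph{equivalent} to the recursion $\#\nofull_{i}(2i+3)=\binom{3i+2}{2}\,\#\nofull_{i-1}(2i+1)$ with seed $\#\nofull_{-1}(1)=1$, and likewise for the second formula; restating the conjecture as these recursions, checking consistency with the tabulated values, and rewriting the product as $(3m)!/(6^{m}m!)$ (the count of partitions of $[3m]$ into triples) is a change of packaging, not progress. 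Every substantive step is left open, and you concede as much in your final paragraph.

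Moreover, the one concrete mechanism you propose has a structural defect. Passing from shape $\delta_{2i+2}$ to $\delta_{2i}$ deletes the skew rim $\delta_{2i+2}/\delta_{2i}$, which contains $(2i+2)+(2i+1)=4i+3$ boxes, while the length is to drop only from $3i+3$ to $3i$; so the rim cannot simply carry ``the three largest labels.'' Worse, in a $\psi$-tableau the $r$-sets are $B$-strips, which routinely straddle the rim boundary --- the extremal tableaux constructed in Lemma \ref{lem:2iPlus3} are themselves examples, since their columns carry constant labels running from the outer diagonal deep into the interior. Hence the restriction of $C\in\nofull_{i}(2i+3)$ to $\delta_{2i}$ is in general not a tableau on label set $[3i]$, the ``strip map'' is not well defined as described, and the claimed $\binom{3i+2}{2}$-to-one fibering has no construction behind it; Propositions \ref{prop:PsiTabChar} and \ref{prop:DyckPathsDecreasingHeights} do not by themselves repair this. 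The factor $\tfrac{i}{5}$ in the second formula you explicitly leave unexplained. (A small algebra slip as well: $\binom{3j-1}{2}=(3j-1)(3j-2)/2$ and $\prod_{j=1}^{m}(3j-1)(3j-2)=(3m)!/(3^{m}m!)$, so your intermediate identity with the prefactor $6^{-m}$ is wrong as written, even though the final closed form $(3m)!/(6^{m}m!)$ is correct.) In sum: the conjecture remains open, and your proposal is a research plan whose central bijections are both missing and, in the rim-stripping form stated, unworkable.
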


\section*{Acknowledgments}

Many thanks goes out to Kevin Treat for proofreading and
helping out with notation herein. I thank Susanna Fishel for suggesting
problems on the Tamari lattice and for her advice and encouragement.
I was partially supported by GAANN Grant $\#$P200A120120.

\bibliographystyle{plain}
\bibliography{LNTamariRecursion}

\end{document}